%
%
%

\documentclass[11pt,twoside]{amsart}
\usepackage{latexsym,amssymb,amsmath}
\usepackage[all]{xy}
\usepackage{enumerate}
\usepackage{extpfeil}
\usepackage{afterpage}
\usepackage{float}
\usepackage{hyperref}
\usepackage{tikz}

\textwidth=16.00cm
\textheight=22.00cm
\topmargin=0.00cm
\oddsidemargin=0.00cm
\evensidemargin=0.00cm
\headheight=0cm
\headsep=1cm
\headsep=0.5cm
\numberwithin{equation}{section}
\hyphenation{semi-stable}
\setlength{\parskip}{3pt}

\newtheorem{theorem}{Theorem}[section]
\newtheorem{lemma}[theorem]{Lemma}
\newtheorem{proposition}[theorem]{Proposition}
\newtheorem{corollary}[theorem]{Corollary}

\theoremstyle{definition}
\newtheorem{definition}[theorem]{Definition}
\newtheorem{procedure}[theorem]{Procedure}
\newtheorem{remark}[theorem]{Remark}
\newtheorem{example}[theorem]{Example}

\begin{document}

\title[The v-number of ideals of covers]
{The v-numbers and linear presentations of ideals of covers of graphs}

\thanks{The second and third authors were supported by SNII, M\'exico. 
The first author was supported by a scholarship from CONAHCYT, M\'exico.}

\author[H. Mu\~noz-George]{Humberto Mu\~noz-George}
\address{
Departamento de
Matem\'aticas\\
Cinvestav, Av. IPN 2508, 07360, CDMX, M\'exico.
}
\email{humbertomgeorge@gmail.com}

\author[E. Reyes]{Enrique Reyes}
\address{
Departamento de
Matem\'aticas\\
Cinvestav, Av. IPN 2508, 07360, CDMX, M\'exico.
}
\email{ereyes@math.cinvestav.mx}

\author[R. H. Villarreal]{Rafael H. Villarreal}
\address{
Departamento de
Matem\'aticas\\
Cinvestav, Av. IPN 2508, 07360, CDMX, M\'exico.
}
\email{rvillarreal@cinvestav.mx}
\thanks{Rafael H. Villarreal, Corresponding author, Departamento de
Matem\'aticas,
Cinvestav, Av. IPN 2508, 07360, CDMX, M\'exico, email address: rvillarreal@cinvestav.mx}

\keywords{v-number, ideal of covers, edge ideals, regularity, linear
presentation, K\"onig graphs}
\subjclass[2020]{Primary 13F20; Secondary 05E40, 13H10.}
\begin{abstract} 
Let $G$ be a graph and let $J=I_c(G)$ be its ideal of covers. 
The aims of this work are to study the 
{\rm v}-number ${\rm v}(J)$ of $J$ and to study when $J$ is linearly 
presented using combinatorics and commutative algebra. 
We classify when ${\rm v}(J)$ attains 
its minimum and maximum  possible values in terms of the vertex covers of
the graph that satisfy the exchange property. 
If the cover ideal of a graph has a linear
presentation, we express its v-number in terms of the covering number 
of the graph. If $G$ is unmixed, the graph $\mathcal{G}_J$ of $J$ is
the graph whose vertices are the minimal vertex covers of $G$ and
whose edges are the pairs $\{C,C'\}$ such that $|C\cup C'|=|C|+1$. 
We show necessary and sufficient conditions for the graph
$\mathcal{G}_J$ of $J$ to be connected. Then, for unmixed K\"onig
graphs, we classify when $J$ is linearly presented using graph theory,
and show some results on
Cohen--Macaulay K\"onig graphs. If $G$ is unmixed, it is shown that 
the columns of the linear syzygy matrix of $J$ are linearly
independent if and only if $\mathcal{G}_J$  has no strong 
$3$-cycles. One of our main theorems shows
that if $G$ is unmixed and has no induced $4$-cycles, then $J$ is linearly presented. 
For unmixed graphs without $3$- and $5$-cycles, we classify
combinatorially when $J$ is linearly presented.
\end{abstract}

\maketitle

\section{Introduction}\label{intro-section}

Let $G$ be a simple graph with vertex 
set $V(G)=\{t_1,\ldots,t_s\}$ and edge set $E(G)$.  
A subset $C\subset V(G)$ is a \textit{vertex cover} of $G$  
if every edge of $G$ is incident with at least one vertex in $C$.
A \textit{minimal vertex cover} of $G$ is a vertex cover which does not contain any vertex cover
properly. The dual
concept of a vertex cover is a \textit{stable} set, i.e., 
a subset of $V(G)$ is a stable vertex
set if and only if its complement with respect to $V(G)$ is a vertex
cover. For convenience 
of notation we write edges in set notation $\{t_i,t_j\}$.

Let $S=K[t_1,\ldots,t_s]=\bigoplus_{d=0}^{\infty} S_d$ be a polynomial ring over
a field $K$ with the standard grading. The monomials of $S$ are denoted 
$t^a:=t_1^{a_1}\cdots t_s^{a_s}$, $a=(a_1,\dots,a_s)\in\mathbb{N}^s$.
Let $C_1,\ldots,C_r$ be the minimal vertex covers
of $G$ and let $t_{c_1},\ldots,t_{c_r}$ be their corresponding
monomials, that is, $t_{c_j}=\prod_{t_i\in C_j}t_i$ for
$j=1,\ldots,r$. 
The following squarefree monomial ideals are associated to $G$.

The \textit{edge ideal} of $G$, denoted $I(G)$, is the
ideal \cite{cmg}:
$$
I(G):=(\{t_it_j\mid \{t_i,t_j\}\in E(G)\})\subset S,
$$
and the \textit{ideal of
covers} of $G$, denoted $I_c(G)$, is the ideal \cite{alexdual}:  
$$
I_c(G):=(t_{c_1},\ldots,t_{c_r})\subset S.
$$
\quad The aims of this work are to study the 
{\rm v}-number  of $I_c(G)$ and to 
study when $I_c(G)$ is linearly presented using combinatorics.  
In Section~\ref{prelim-section} we
present some well-known results that will be needed throughout the paper
and introduce some notation. 

We say a vertex cover $C$ of $G$ has the \textit{exchange 
property} if there exists $\{t_k,t_\ell\}\in E(G)$
such that $t_k\in C$, $t_\ell\notin C$, and
$(C\setminus\{t_k\})\cup\{t_\ell\}$ is a vertex cover of $G$. Vertex
covers with the exchange property always exist, and can be classified 
using neighbor sets and symmetric differences of vertex covers
(Lemma~\ref{jul13-24}, 
Proposition~\ref{equiv-exchange}). The \textit{exchange number} of
$G$, denoted $ \alpha_e(G)$, is the  
number of vertices in any smallest vertex cover of $G$ with the
exchange property. This number is related to the \textit{covering
number} $\alpha_0(G)$ of $G$ which is the number of
vertices in any smallest vertex cover of $G$. The v-{\em number} of
$I_c(G)$, denoted ${\rm 
v}(I_c(G))$, is an invariant that was introduced 
in \cite{min-dis-generalized} to study the asymptotic behavior 
of the minimum distance of projective Reed--Muller-type codes
\cite[Corollary~4.7]{min-dis-generalized}. This invariant 
is defined for any graded ideal, and 
can be computed using the software system \textit{Macaulay}$2$
\cite{mac2} and the algorithm given in 
\cite[Appendix A]{im-vnumber}. The {\rm v-number} of monomial ideals
has been studied in 
\cite{Biswas-Mandal,civan,Ficarra,v-number,Kumar-Nanduri-Saha,Saha,Saha-Gorenstein,saha-sengupta}.
Using a combinatorial
description for the v-number of ideals of clutters
\cite[Theorem~3.5]{v-number}, \cite[Remark~2.5]{Saha}, we obtain: 
$${\rm v}(I_c(G))=\alpha_e(G)-1.$$
\quad In general, ${\rm v}(I_c(G))\geq\alpha_0(G)-1$
\cite[Lemma~3.16]{v-number} with equality 
if and only if there exists a vertex cover $C$ of $G$ with 
$\alpha_0(G)$ vertices satisfying the exchange property
(Lemma~\ref{v-number-dual}).

Letting $J:=I_c(G)$, we say $J$ is \textit{linearly presented} if the monomials 
$t_{c_1},\ldots,t_{c_r}$ are of the same degree $\alpha_0(G)$ and 
the kernel of the presentation map $\psi\colon S^r\rightarrow S$, 
$e_i'\mapsto t_{c_i}$, is generated by vectors whose entries 
are linear forms, where $e_i'$ is the $i$-th unit vector in $S^r$. 

Let $G$ be an \textit{unmixed} graph, that is, $|C_i|=\alpha_0(G)$
for all $i$. Unmixed graphs are also 
called \textit{well-covered} graphs \cite{Kimura-Terai}. 
The graph of $J$, denoted by $\mathcal{G}_{J}$, has 
vertex set $V(\mathcal{G}_{J}):=\{C_i\}_{i=1}^r$, 
and the pair $\{C_i,C_j\}$ is an edge of $\mathcal{G}_{J}$ if and
only if 
$|C_i\cup C_j|=|C_i|+1$. 
According to \cite[Proposition~2.1]{Bigdeli-etal},
\cite{Dao-Eisenbud}, the ideal $J$ is linearly
presented if and only if for all $C_i, C_j\in V(\mathcal{G}_J)$,
$i\neq j$,  
there is a path in $\mathcal{G}_J$ 
connecting $C_i$ and $C_j$ whose vertices are 
contained in $C_i\cup C_j$ (cf.~Proposition~\ref{Bigdeli-Herzog-j}). 
If $J$ is linearly presented, then $\mathcal{G}_J$ is connected but the converse
is not true  
(Example ~\ref{example-conjetura2}).

Our first result relates the edges of the graph $\mathcal{G}_J$ of $J$ with those of 
the graph $G$, classifies when ${\rm v}(J)$ attains its minimum
possible value in terms of $\mathcal{G}_J$, and
determine the v-number of $J$ for a wide family of graphs. 
Recall that a graph without edges is called a \textit{discrete graph}.   

\noindent\textbf{Theorem~\ref{eclipse-apr8-24}.}\textit{ Let $G$ be
an unmixed graph and let $J=I_c(G)$. The following hold.  
\begin{enumerate}
\item[\rm(a)] $\{C_i,C_j\}$ is an edge of $\mathcal{G}_J$ if and only
if there exists $\{t_k,t_\ell\}\in E(G)$ such that $t_k\in C_i$,
$t_\ell\notin C_i$ and $(C_i\setminus\{t_k\})\cup\{t_\ell\}=C_j$. 
\item[\rm(b)] $\mathcal{G}_J$ is not a discrete graph if and only if 
${\rm v}(I_c(G))=\alpha_0(G)-1$.
\item[\rm(c)] If $J$ is linearly presented, then all 
minimal vertex covers of $G$ have the exchange property and 
${\rm v}(J)=\alpha_0(G)-1$.
\end{enumerate}
}

Given a vertex  $t_i$ of $G$, we denote the neighborhood of
$t_i$ by $N_G(t_i)$ and the closed neighborhood of $t_i$ by 
$N_G[t_i]$. 
If $N_G(t_k)\subset N_G[t_\ell]$ for some
$\{t_k,t_\ell\}\in E(G)$, then there exists a minimal vertex cover $C$
of $G$ such that $N_G(t_k)\setminus
C=\{t_\ell\}$ (Proposition~\ref{saha->exchange}). 
The converse is not true (Example~\ref{5-cycle}). 
The condition ``$N_G(t_k)\subset N_G[t_\ell]$ for some
$\{t_k,t_\ell\}\in E(G)$'' first appeared 
in \cite[Proposition~3.1]{Saha}. This condition implies the 
inequality ${\rm v}(I_c(G))\leq{\rm bight}(I(G))-1$ \cite{Saha}, where
${\rm bight}(I(G))$ is the \textit{big height} of $I(G)$, that is, 
the size of a largest minimal vertex cover of
$G$. 

As usual, the regularity and Krull dimension of the quotient ring $S/I_c(G)$ are
denoted ${\rm reg}(S/I_c(G))$ and $\dim(S/I_c(G))$, respectively 
(Section~\ref{prelim-section},
Definition~\ref{regularity-socle-degree}). By 
\cite[Theorem~3.31]{edge-ideals} and \cite[Proposition~3.2]{v-number}, 
one has the inequalities
$$
{\rm bight}(I(G))-1\leq{\rm reg}(S/I_c(G))\leq\dim(S/I_c(G)),
$$ 
with equality on the left if $I(G)$ is sequentially Cohen--Macaulay (cf.
Corollary~\ref{apr10-24}).  
By a result of Saha
\cite[Theorem~3.8]{Saha} the v-number of $I_c(G)$ is a lower bound 
for the regularity of $S/I_c(G)$. Thus, in general, one has the
following inequalities
\begin{equation}\label{summarize-ineq}
\alpha_0(G)-1\leq {\rm v}(I_c(G))\leq{\rm
reg}(S/I_c(G))\leq\dim(S/I_c(G))=s-2.
\end{equation}

Let $G$ be a graph and let $\overline{G}$ be its complement. We say
$G$ is a \textit{codi-graph} (complement disconnected graph) 
if $\overline{G}$ is disconnected. A graph $G$ in which every
connected induced subgraph has a disconnected complement is called a
\textit{cograph}. A connected cograph is a codi-graph but the converse
is not true (Example~\ref{example-cograph}). 
Let $G_1,\ldots,G_k$ be mutually (vertex) disjoint graphs. Their 
\textit{join}, denoted $G_1\ast\cdots*G_k$, consist
 of $G_1\cup\cdots\cup G_k$ and all the edges of the form $\{x,y\}$, 
 where $x\in V(G_i)$ and $y\in V(G_j)$ for all $i\neq j$. 
If $k\geq 2$ and $G_i$ is a discrete graph for all
 $i$, 
we say that $G$ is a \textit{complete 
multipartite graph} or a \textit{complete} $k$-\textit{partite
graph}, 
and if $|V(G_i)|=p$ for all $i$ we say that $G$ is
\textit{homogeneous}.  

We classify when the v-number of $I_c(G)$ attains its maximum possible
value in Eq.~\eqref{summarize-ineq} and give a combinatorial
description of complete multipartite graphs.

\noindent \textbf{Theorem~\ref{v-ci-maximum=s-2}.}\textit{
Let $G$ be a graph with $E(G)\neq\emptyset$. 
The following conditions are equivalent.
\begin{enumerate}
\item[\rm(a)] ${\rm v}(I_c(G))=s-2$.
\item[\rm(b)] ${\rm v}(I_c(G))={\rm
reg}(S/I_c(G))=\dim(S/I_c(G))=s-2$ and $G$ is a codi-graph.
\item[\rm(c)] Every vertex cover of $G$ with
the exchange property has exactly $s-1$ vertices.
\item[\rm(d)] ${\rm v}(I_c(H))=|V(H)|-2$ for any induced subgraph $H$ of
$G$ with $E(H)\neq\emptyset$.
\item[\rm(e)] Any induced subgraph $H$ of
$G$ with $E(H)\neq\emptyset$ is connected.
\item[\rm(f)] $G$ is a complete multipartite graph.
\end{enumerate}
} 

We recover the fact that ${\rm v}(I_c(G))=s-2$ 
for complete multipartite
graphs \cite[Theorem~3.6]{Saha}, and show 
that $I_c(G)$ is Cohen--Macaulay when ${\rm v}(I_c(G))=s-2$
(Corollary~\ref{vp=s-2}). If $G$ is the join of two graphs $G_1$ and $G_2$, we give a formula 
to compute the v-number of $G$ in terms of those of $G_1$ and $G_2$ 
(Proposition~\ref{join-formula}, Example~\ref{example-join-formula}).
Given an integer $k\geq 1$, let $G_1$ be the vertex disjoint union of $k+1$
edges, let $G_2$ be a discrete
graph with one vertex, and let $G=G_1*G_2$ be their join. Then
(Corollary~\ref{diff}, cf.~\cite[Theorem~3.10]{Saha}):
$$
{\rm reg}(S/I_c(G))-{\rm v}(I_c(G))=k.
$$

An edge $e=\{t_1,t_2\}$ of $G$ has the  {\rm (P)} \textit{property}, if for all edges 
$\{t_1,t'_1\}$ and $\{t_2,t'_2\}$ distinct from $e$, one has that 
$\{t'_1,t'_2\}$ is an edge of $G$. 
We characterize when all (resp. none) of the edges of a
complete 
multipartite graph have the {\rm (P)} property. For these graphs 
we characterize the unmixed property
(Proposition~\ref{compl-$k$-partite}). If $G$ is a homogeneous complete multipartite 
graph, it is shown that $\mathcal{G}_{I_c(G)}$ is connected 
if and only if ${\rm v}(I_c(G))=\alpha_0(G)-1$
(Corollary~\ref{jul14-24-3}). 

Let $G$ be an unmixed graph and let $r$ be the number of minimal
vertex covers of $G$. The 
\textit{linear syzygy matrix} of $J=I_c(G)$, denoted
$\mathcal{LS}(J)$, is an $r\times|E(\mathcal{G}_J)|$ matrix whose entries are
either $0$ or have the form $\pm t_i$, $t_i\in S$, and whose columns correspond
to linear syzygies of $J$ (Section~\ref{prelim-section}). A $k\times
k$ \textit{minor} of a matrix $\varphi$ with entries in $S$ is the determinant of a
square submatrix of $\varphi$ of order $k$. 
The ideal generated by the $k\times k$ minors of $\varphi$ is denoted by $I_k(\varphi)$, the
(\textit{determinantal}) \textit{rank} of $\varphi$ and
\textit{height} of $I_k(\varphi)$ are denoted ${\rm rank}(\varphi)$
and ${\rm ht}(I_k(\varphi))$, respectively \cite{Ramos-Simis}.

If $G$ is an unmixed graph, then 
$J$ is linearly presented if and only ${\rm
rank}(\mathcal{L}\mathcal{S}(J))=r-1$ 
and ${\rm ht}(I_{r-1}(\mathcal{L}\mathcal{S}(J))\geq 2$
(Proposition~\ref{syz}). Any nonzero minor of $\mathcal{LS}(J)$ is of
the form $\pm t^a$, $t^a\in S$  
(Lemma~\ref{chicapitanga}, cf.~\cite[cf.~Lemma~3.5]{birational}).
For unmixed graphs, we show that  $\mathcal{G}_{J}$ is connected 
if and only if ${\rm rank}(\mathcal{LS}(J))=r-1$
(Proposition~\ref{apr14-05-coro}). 

Let $G$ be an unmixed graph. We describe the induced paths of 
length $2$ of $\mathcal{G}_J$ 
(Proposition~\ref{jun5-24}) and the paths $\{C_1,\ldots,C_n\}$ 
of $\mathcal{G}_J$ such that $C_i\subset C_1\cup C_n$ for all $i$
(Proposition~\ref{jul14-24}). An edge $e=\{t,t'\}$ of $G$ is 
called an \textit{exchange edge}, if there are
minimal vertex covers $C$ and $C'$ of $G$ such that
$C'=(C \setminus \{t\}) \cup \{t'\}$.
If $G$ has no isolated vertices and $\mathcal{G}_J$ is connected, then $V(G)$ is covered by the
exchange edges (Proposition~\ref{connected-exchange}). 
If $\mathcal{G}_J$ is a tree or a path, we classify when $J$ has a linear
presentation (Corollary~\ref{linearp-trees}) and in the latter case 
we give a combinatorial criterion (Proposition~\ref{jul14-24-1}). In
these cases the reader is refer to \cite[Sections~3 and 4]{MSJ} for
additional results on the structure of $J$. 
If $G$ is an unmixed bipartite graph without isolated
vertices, we show that $\mathcal{G}_J$ is bipartite
(Proposition~\ref{I_c-bipartite}).

A $3$-cycle $\{C_1, C_2, C_3, C_1\}$ of $\mathcal{G}_{J}$
is \textit{strong} if $C_2\setminus C_1=C_3\setminus C_1$. For unmixed
graphs, we show that the set $F$ of column vectors of
$\mathcal{L}\mathcal{S}(J)$ is a minimal system of generators
for $S(F)$, the $S$-module generated by $F$, if and only if $\mathcal{G}_J$  has no strong 
$3$-cycles (Proposition~\ref{min-gen-S}).

Two distinct vertices $t$ and $t'$
of a graph $G$ are called \textit{duplicated} if
$N_G(t)=N_G(t')$. If $\mathcal{G}_{J}$ is connected, 
we show that $G$ has no duplicated vertices and that 
every  $4$-cycle of $G$ has an edge that does not
satisfy the {\rm (P)} property (Theorem~\ref{connected-duplicated}). 
The converse is not true (Example~\ref{example-conjetura}). 

Let $P$ be a \textit{matching} of a graph $G$, that is, $P$ is a set
of disjoint edges of $G$.
If $P$ is a partition of $V(G)$, we call $P$ a \textit{perfect
matching}.  We say that $P$ has the {\rm (P)} \textit{property}   
if each edge of $P$ has the {\rm (P)} property. A graph $G$ is 
\textit{K\"onig} if 
$G$ has a matching with $\alpha_0(G)$ edges. K\"onig graphs 
include bipartite graphs by a classical result of K\"onig
(Theorem~\ref{konig-theorem}), as well as whisker graphs and very
well-covered graphs \cite[p.~278]{unmixed-c-m}. A perfect matching $P$ of $G$ is of \textit{K\"onig
type} if $\vert P \vert=\alpha_0(G)$ \cite{MRV}. This notion relates to the 
fact that very well-covered graphs (Definition~\ref{vwc}) can be classified using 
perfect matchings \cite{favaron,Kimura-Terai} (cf. Proposition~\ref{vw-c}).

A graph $G$ is called \textit{Cohen--Macaulay} if $I(G)$ is
Cohen--Macaulay (Definition~\ref{regularity-socle-degree}). 
We show the following two results on Cohen--Macaulay 
K\"onig  graphs. 

\noindent \textbf{Theorem~\ref{Konig-connectd}.}\textit{ 
 Let $G$ be an unmixed K\"onig graph without isolated vertices. The following
conditions are equivalent. 
\begin{enumerate}
\item[\rm(a)] $G$ is Cohen--Macaulay.
\item[\rm(b)] $I_c(G)$ is linearly presented.
\item[\rm(c)] $\mathcal{G}_{I_c(G)}$ is connected.
\item[\rm(d)] $G$ has no duplicated vertices.
\item[\rm(e)] Every induced $4$-cycle has an edge without the {\rm (P)} property.
\item[\rm(f)] $G$ has a unique perfect matching.
\end{enumerate}
}

If $G$ is a very well-covered graph, by a result of Crupi, Rinaldo and
Terai \cite[Theorem~0.2]{Crupi-Rinaldo-Terai}, 
$G$ is Cohen--Macaulay if and only
if $G$ has a unique perfect matching. Part (a) of Theorem~\ref{CM-connectd1}
gives a combinatorial description of the unique perfect matching of a 
Cohen--Macaulay K\"onig graph without isolated vertices. For a
bipartite graph $G$, Zaare-Nahandi \cite{zaare-nahandi} revisited the Cohen--Macaulayness of
$G$ and proved, by different methods than those of \cite{Crupi-Rinaldo-Terai},  
that $G$ is Cohen--Macaulay if and only if $G$ is unmixed and 
has a unique perfect matching.

\noindent \textbf{Theorem~\ref{CM-connectd1}.}\textit{
If $G$ is a Cohen--Macaulay K\"onig graph without isolated vertices, then 
\begin{enumerate}
\item[\rm(a)] $G$ has a unique perfect matching $P$ which is the set
of exchange edges of $G$, 
and
\item[\rm(b)] $\mathcal{G}_{I_c(G)}$ is a bipartite graph.
\end{enumerate}
}

Let $C_1\triangle C_2$ denote the symmetric difference of $C_1$ and
$C_2$. If $G$ is unmixed and $I_c(G)$
is not linearly presented, we show that $G$ has 
distinct minimal vertex covers $C_1$ and $C_2$ 
such that the induced subgraph $G[C_1 \triangle C_2]$ of $G$, with
vertex set $C_1 \triangle C_2$, is an unmixed bipartite graph without
isolated vertices which is not Cohen--Macaulay (Theorem~\ref{noLinear-noCM}).  
In general the converse is not true
(Example~\ref{example-conjetura3}). 

We come to one of our main results.

\noindent \textbf{Theorem~\ref{4-cycles-connected}.}\textit{
If $G$ is an unmixed graph without induced $4$-cycles, then
$I_c(G)$ is linearly presented and ${\rm v}(I_c(G))=\alpha_0(G)-1$.
}

In general the converse of
Theorem~\ref{4-cycles-connected} is not 
true (Example ~\ref{contraexample}). 

The set of all minimal vertex covers of a graph $G$ is denoted by
$E(G^\vee)$. The reason for this notation is explained in
Section~\ref{prelim-section} using the notion of
\textit{blocker} coming from the theory
of clutters. 
Let $G$ be an unmixed graph such that $I_c(G)$ is linearly presented.
If  $G[C_1 \triangle C_2]$ is unmixed and
not Cohen-Macaulay, where $C_1$ and $C_2$
are distinct minimal vertex covers of $G$, we show that there exists 
$C\in E(G^\vee)$ such that $C_1 \cap C_2  \not\subset C \subset C_1
\cup C_2$ (Theorem~\ref{not-linear-presented}).

We come to another of our main results. 

\noindent \textbf{Theorem~\ref{3-5-cycles-linpresented}.}\textit{
 Let $G$ be an unmixed graph without $3$- and $5$-cycles. The following condition are
 equivalent.
\begin{enumerate}
\item[\rm(a)] $I_c(G)$ is linearly presented.
\item[\rm(b)] If $G[C_1 \triangle C_2]$ is unmixed, where 
$C_1, C_2 \in E(G^\vee)$, $C_1\neq C_2$, then it is Cohen--Macaulay.
\end{enumerate}
}

Consider the following families of graphs: 
\begin{enumerate}
\item[\rm(a)] $U$ is the family of unmixed graphs. 
\item[\rm(b)] $U_1=\{G \in U \mid \mbox{every induced } 4 \mbox{-cycle of }  G\mbox{ has an edge without the {\rm (P)} property}\} $.
\item[\rm(c)] $U_2=\{G \in U \mid  G \mbox{ has no duplicated vertices}\} $.
\item[\rm(d)] $U_3=\{G \in U \mid  \mathcal{G}_{I_c(G)} \mbox{ is connected}\} $.
\item[\rm(e)] $U_4=\{G \in U \mid  I_c(G) \mbox{ is linearly presented}\} $.
\item[\rm(f)] $U_5=\{G \in U \mid  G \mbox{ has no induced} \mbox{ }4 \mbox{-cycles}\}$.
\end{enumerate}
\quad We show the inclusions 
$U_5 \subset U_4 \subset U_3 \subset U_2 \subset U_1 \subset U,
$
and give examples to prove that all inclusions are strict 
(Proposition~\ref{jul27-24}).

In Section~\ref{examples-section}, we present examples related to 
some of our results. Then, in Appendix~\ref{Appendix}, we give the
procedures for \textit{Macaulay}$2$ \cite{mac2} that we use 
to compute the graph of ideals of covers, and regularities, v-numbers, 
linear syzygy matrices, and ideals of minors of monomial ideals.   

For all unexplained
terminology and additional information  we refer to
\cite{Har,edge-ideals,van-tuyl-survey,monalg-rev} (for graph theory and edge ideals), and
\cite{Eisen,Mats,Ramos-Simis} (for commutative rings and ideals of minors).

\section{Preliminaries}\label{prelim-section}

To avoid repetitions, we continue to employ
the notations and
definitions used in Section~\ref{intro-section}. 
Let $S=K[t_1,\ldots,t_s]=\bigoplus_{d=0}^{\infty} S_d$ be a polynomial ring over
a field $K$ with the standard grading  
and let $\mathcal{C}$ be a
\textit{clutter} with vertex 
set $V(\mathcal{C})=\{t_1,\ldots,t_s\}$, that is, $\mathcal{C}$ is a family of subsets of
$V(\mathcal{C})$, called \textit{edges}, none of which is included in
another. Any squarefree monomial ideal is
the edge ideal of a clutter 
\cite[pp.~220--221]{monalg-rev}. 
An \textit{isolated vertex} of $ \mathcal{C}$ is a vertex
that is not in any edge of $\mathcal{C}$. 
The set of edges of $\mathcal C$ is denoted by
$E(\mathcal{C})$. The primer example of a clutter is a simple graph
$G$. The \textit{edge ideal} of $\mathcal{C}$, denoted
$I(\mathcal{C})$, 
is the ideal of $S$ generated
by all squarefree monomials $t_e:=\prod_{t_i\in e}t_i$ such that
$e\in E(\mathcal{C})$.  
In what follows $I$ denotes the edge ideal of
$\mathcal{C}$. 

A prime ideal $\mathfrak{p}$ of $S$ is an \textit{associated prime}
of $I$ if
$(I\colon f)=\mathfrak{p}$ for some $f\in S_d$, where $(I\colon f)$ is
the set of all $g\in S$ such that $gf\in I$. The set of associated primes of $I$ 
is denoted by ${\rm Ass}(I)$. The v-{\em number} of $I$, denoted ${\rm
v}(I)$, is the following invariant of
$I$ \cite[Corollary~4.7]{min-dis-generalized}:
$$
{\rm v}(I):=\min\{d\geq 0 \mid\exists\, f 
\in S_d \mbox{ and }\mathfrak{p} \in {\rm Ass}(I) \mbox{ with } (I\colon f)
=\mathfrak{p}\}.
$$
\quad 
One can define the v-number of $I$ locally at each associated 
prime $\mathfrak{p}$ of $I$\/:
$$
{\rm v}_{\mathfrak{p}}(I):=\mbox{min}\{d\geq 0\mid \exists\, f\in S_d
\mbox{ with }(I\colon f)=\mathfrak{p}\}.
$$
\quad A subset $A$ of $V(\mathcal{C})$ is called 
{\it independent\/} or {\it
stable\/} if $e\not\subset A$ for any  
$e\in E(\mathcal{C})$. If $\mathcal{C}$ is a
graph, this means that for $x,y\in A$, $\{x,y\}\notin
E(\mathcal{C})$. The dual concept of a stable set
is a {\it vertex cover\/}, i.e., a subset $C$ of $V(\mathcal{C})$ is a vertex
cover if and only if $V(\mathcal{C})\setminus C$ is a stable set. A 
{\it minimal vertex cover\/} is a vertex cover which
is minimal with respect to inclusion.  
If $A$ is a stable set of $\mathcal{C}$, the \textit{neighbor set} of $A$, denoted 
by $N_\mathcal{C}(A)$, is the set of all vertices $t_i$ 
such that $\{t_i\}\cup A$ contains an edge of $\mathcal{C}$. 
For use below, $\mathcal{A}_\mathcal{C}$
denotes the family of all stable sets $A$ of $\mathcal{C}$ whose neighbor
set $N_\mathcal{C}(A)$ is a minimal vertex
cover of $\mathcal{C}$. The following result shows that the 
v-number of a clutter is a combinatorial invariant.

\begin{theorem}\cite[pp.~9--10]{v-number}\label{v-number-clutters-graphs}
Let $I$ be the edge ideal of a clutter $\mathcal{C}$. If $I$ is not
prime, then, 
$$
\mathrm{v}(I)=\min\{|A|\colon 
A\in\mathcal{A}_\mathcal{C}\}\ \mbox{ and }\ (I\colon
t_A)=(N_\mathcal{C}(A))\ \forall\, A\in\mathcal{A}_\mathcal{C}.
$$ 
\end{theorem}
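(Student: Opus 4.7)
The plan is to prove the two assertions in tandem. First I will verify that for every $A\in\mathcal{A}_\mathcal{C}$ the colon ideal $(I\colon t_A)$ equals the monomial prime $\mathfrak{p}:=(N_\mathcal{C}(A))$, which is automatically a minimal prime of $I$ since $N_\mathcal{C}(A)$ is, by hypothesis, a minimal vertex cover. Evaluating the v-number on such $t_A$ then yields $\mathrm{v}(I)\le\min\{|A|:A\in\mathcal{A}_\mathcal{C}\}$. Next I will establish the reverse inequality by extracting, from any witness $f$ for $\mathrm{v}(I)$, a stable set $B\in\mathcal{A}_\mathcal{C}$ with $|B|\le\deg f$.

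The inclusion $\mathfrak{p}\subseteq(I\colon t_A)$ is direct: for a generator $t_i$ of $\mathfrak{p}$, the definition of $N_\mathcal{C}(A)$ furnishes an edge $e\subseteq\{t_i\}\cup A$, so $t_e\mid t_it_A$ and hence $t_it_A\in I$. For the reverse inclusion I plan to exploit that stability of $A$ forces $A\cap N_\mathcal{C}(A)=\emptyset$, whence $t_A\notin\mathfrak{p}$. Since $(I\colon t_A)$ is a monomial ideal, it is enough to test a monomial $g$ with $gt_A\in I$; combined with $I\subseteq\mathfrak{p}$ (as $\mathfrak{p}$ is a minimal prime of $I$) and the primality of $\mathfrak{p}$, the nonmembership $t_A\notin\mathfrak{p}$ then gives $g\in\mathfrak{p}$. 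This proves $(I\colon t_A)=\mathfrak{p}\in\mathrm{Ass}(I)$, and hence $\mathrm{v}(I)\le|A|$ for every $A\in\mathcal{A}_\mathcal{C}$.

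For the opposite inequality, fix $f\in S_d$ with $d=\mathrm{v}(I)$ and $(I\colon f)=\mathfrak{p}\in\mathrm{Ass}(I)$. Squarefreeness of $I$ gives $\mathrm{Ass}(I)=\{(C):C\text{ a minimal vertex cover of }\mathcal{C}\}$, so $\mathfrak{p}=(C)$ for some minimal cover $C$. The technical core is to replace $f$ by a squarefree monomial $t_B$ of degree at most $d$ with $(I\colon t_B)=\mathfrak{p}$. First, the standard reduction for monomial ideals produces a monomial $t^b$ with $(I\colon t^b)=\mathfrak{p}$ and $\deg t^b\le d$. Then squarefreeness of $I$ forces $(I\colon t^b)=(I\colon t_B)$ where $B:=\mathrm{supp}(t^b)$, because each squarefree generator $t_e$ of $I$ divides $gt^b$ iff $e\subseteq\mathrm{supp}(g)\cup B$ iff $t_e\mid gt_B$; hence $(I\colon t^b)$ depends only on $B$, and $\deg t_B=|B|\le\deg t^b\le d$.

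Finally, I will verify $B\in\mathcal{A}_\mathcal{C}$. Stability of $B$ is automatic: if some edge $e\subseteq B$ then $t_B\in I$, contradicting $(I\colon t_B)=\mathfrak{p}\ne S$. The inclusion $N_\mathcal{C}(B)\subseteq C$ is the same computation as in the second paragraph. For the reverse, each generator $t_i$ of $\mathfrak{p}=(C)$ lies in $(I\colon t_B)$, so $t_it_B\in I$, and a witnessing edge $e$ satisfies $t_i\in e$ with $e\setminus\{t_i\}\subseteq B$ (by stability of $B$), giving $t_i\in N_\mathcal{C}(B)$. Thus $N_\mathcal{C}(B)=C$ is a minimal vertex cover, $B\in\mathcal{A}_\mathcal{C}$, and $d=\mathrm{v}(I)\ge|B|\ge\min\{|A|:A\in\mathcal{A}_\mathcal{C}\}$. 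I expect the main obstacle to be the monomial reduction in the previous paragraph: for a polynomial $f$, $(I\colon f)$ need not coincide with $\bigcap_m(I\colon m)$ over the monomials of $f$, so extracting the monomial $t^b$ relies on the standard structural results on associated primes of monomial ideals (via primary decomposition and the interaction with colon ideals).
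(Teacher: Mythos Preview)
The paper does not prove this statement; it is quoted without proof as a preliminary result from \cite{v-number}. So there is no in-paper argument to compare against, and the relevant question is simply whether your proof is correct. It is.

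Your plan and execution are the natural ones. The one step you yourself flag---passing from an arbitrary homogeneous witness $f$ of degree $d$ to a monomial witness of degree at most $d$---can be justified as follows, using only that $I$ is squarefree. Write $f=\sum_i c_i m_i$ with distinct monomials $m_i$ and $c_i\ne 0$. For any monomial $g\in\mathfrak{p}=(I\colon f)$ one has $gf=\sum_i c_i\,gm_i\in I$; since the $gm_i$ are distinct monomials and $I$ is a monomial ideal, each $gm_i\in I$. Hence every variable generating $\mathfrak{p}$ lies in $(I\colon m_i)$, so $\mathfrak{p}\subseteq(I\colon m_i)$ for all $i$. Now $I=\bigcap_{\mathfrak{q}\in\mathrm{Ass}(I)}\mathfrak{q}$ with all $\mathfrak{q}$ minimal (squarefreeness), whence $(I\colon m_i)=\bigcap_{m_i\notin\mathfrak{q}}\mathfrak{q}$; the containment $\mathfrak{p}\subseteq(I\colon m_i)$ forces $\mathfrak{p}\subseteq\mathfrak{q}$ for each such $\mathfrak{q}$, and minimality gives $\mathfrak{q}=\mathfrak{p}$. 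Thus $(I\colon m_i)\in\{\mathfrak{p},S\}$ for every $i$. Since $f\notin I$ (as $(I\colon f)\ne S$), some $m_i\notin I$, and for that $m_i$ one has $(I\colon m_i)=\mathfrak{p}$ with $\deg m_i=d$. Your squarefree reduction $(I\colon m_i)=(I\colon t_B)$ with $B=\mathrm{supp}(m_i)$ then finishes the job. The remaining verifications that $B$ is stable and $N_{\mathcal{C}}(B)=C$ are exactly as you wrote.
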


The number of
vertices in any smallest vertex cover of $\mathcal{C}$, denoted 
$\alpha_0({\mathcal C})$, is called the \textit{covering
number} of $\mathcal{C}$. The \textit{height} ${\rm ht}(I)$ of $I$ is
$\alpha_0(\mathcal{C})$ and the \textit{Krull dimension} $\dim(S/I)$ of $S/I$ 
is $s-{\rm ht}(I)$.   
A clutter is
\textit{well-covered} if every maximal
stable set is a maximum stable set. 
The clutter of minimal vertex covers of $\mathcal{C}$, denoted 
$\mathcal{C}^\vee$, is called the {\it blocker} of $\mathcal{C}$. 
The edge set $E(\mathcal{C}^\vee)$ of the blocker 
of $\mathcal{C}$ is the set of minimal vertex covers of $\mathcal{C}$
and the vertices of $\mathcal{C}^\vee$ are the vertices of $\mathcal{C}$. The 
edge ideal of $\mathcal{C}^\vee$, denoted by $I_c(\mathcal{C})$, is
called the {\it ideal of covers\/} of 
$\mathcal{C}$.  

The \textit{deletion} of a vertex $t_i$, denoted by
$\mathcal{C}\setminus t_i$,  is the clutter obtained from
$\mathcal{C}$ by deleting the vertex $t_i$ and all edges containing
$t_i$. Let $A$ be a set of vertices of $\mathcal{C}$.  The \textit{deletion} of $A$, denoted
$\mathcal{C}\setminus A$, is obtained by successively deleting
the vertices of $A$. The \textit{induced clutter} on $A$, 
denoted $\mathcal{C}[A]$, is the clutter on $A$ with 
edge set consisting of all edges of $\mathcal{C}$ contained in $A$. 

A graded ideal is called \textit{unmixed} if all its associated
primes have the same height. A clutter is called \textit{unmixed} if
all its minimal vertex covers have the same number of elements. 
The next result relates minimal vertex covers and
associated primes.    

\begin{lemma}\cite[Lemma~6.3.37]{monalg-rev}\label{jul1-01} Let $C$
be a set of vertices of a clutter
$\mathcal{C}$. Then, $C$ is a minimal vertex cover of $\mathcal{C}$
if and only if the ideal of $S$ generated by $C$ 
is an associated prime of $I(\mathcal{C})$.
\end{lemma}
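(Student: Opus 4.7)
The plan is to exploit the fact that $I=I(\mathcal{C})$ is a squarefree (hence radical) monomial ideal and to compute its primary decomposition in terms of minimal vertex covers, from which both directions of the equivalence fall out at once.

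First, for each subset $C \subset V(\mathcal{C})$ I write $\mathfrak{p}_C := (t_i \mid t_i \in C)$; since $S/\mathfrak{p}_C$ is again a polynomial ring, $\mathfrak{p}_C$ is prime. The central step is to establish the intersection formula
$$
I(\mathcal{C}) = \bigcap_{C \in E(\mathcal{C}^\vee)} \mathfrak{p}_C.
$$
Because both sides are monomial ideals, it suffices to compare monomials. For a monomial $t^a$ with support $A := \mathrm{supp}(t^a)$, one has $t^a \in \mathfrak{p}_C$ if and only if $A \cap C \neq \emptyset$. Hence $t^a$ lies in every $\mathfrak{p}_C$ with $C$ a minimal vertex cover iff $A$ meets every minimal vertex cover, iff $V(\mathcal{C}) \setminus A$ contains no vertex cover at all (any vertex cover contains a minimal one), iff $A$ is not stable, iff $A$ contains some edge $e \in E(\mathcal{C})$, iff $t^a \in I(\mathcal{C})$.

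Next, I verify irredundancy: distinct minimal vertex covers are pairwise incomparable under inclusion, so the corresponding prime ideals $\mathfrak{p}_C$ are pairwise incomparable, and none can be omitted without enlarging the intersection. Since $I(\mathcal{C})$ is squarefree it equals its radical, so there are no embedded components and $\mathrm{Ass}(I(\mathcal{C}))$ coincides with the set of minimal primes over $I(\mathcal{C})$. The irredundant decomposition above identifies those minimal primes as precisely the $\mathfrak{p}_C$ with $C$ a minimal vertex cover, yielding both implications of the lemma: if $C$ is a minimal vertex cover then $\mathfrak{p}_C$ appears in the decomposition and is therefore associated; conversely, an associated prime, being a minimal prime, must equal some $\mathfrak{p}_C$ with $C$ a minimal vertex cover, and in particular the vertex set defining it is precisely such a $C$.

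The main obstacle is the verification of the intersection formula, which requires carefully chaining the equivalences ``$A$ meets every minimal vertex cover'' $\Leftrightarrow$ ``$V(\mathcal{C}) \setminus A$ is not a vertex cover'' $\Leftrightarrow$ ``$A$ contains an edge''. The first of these relies on the elementary but essential fact that every vertex cover contains a minimal one (a finiteness argument on $V(\mathcal{C})$); the rest is the definition of stability. The remaining ingredients, namely that squarefree monomial ideals are radical and that radical ideals have no embedded primes so $\mathrm{Ass}(I) = \mathrm{Min}(I)$, are standard and used without further comment.
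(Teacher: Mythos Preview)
Your argument is correct and is the standard proof of this well-known fact. The paper does not supply its own proof of this lemma; it simply cites it from \cite[Lemma~6.3.37]{monalg-rev} as background, so there is no proof in the paper to compare against. Your route via the irredundant prime decomposition $I(\mathcal{C})=\bigcap_{C\in E(\mathcal{C}^\vee)}\mathfrak{p}_C$ together with the observation that a squarefree monomial ideal is radical (hence $\mathrm{Ass}=\mathrm{Min}$) is exactly the argument one finds in the cited reference, and your chain of equivalences for the containment of a monomial in the intersection is carried out carefully and correctly.
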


\begin{definition}\label{regularity-socle-degree}\rm Let $I\subset S$ be a graded ideal and let
${\mathbf F}$ be the minimal graded free resolution of $S/I$ as an
$S$-module:
\[
{\mathbf F}:\ \ \ 0\longrightarrow
\bigoplus_{j}S(-j)^{b_{g,j}}
\stackrel{\varphi_g}{\longrightarrow} \cdots
\longrightarrow\bigoplus_{j}
S(-j)^{b_{1,j}}\stackrel{\varphi_1}{\longrightarrow} S
\longrightarrow S/I \longrightarrow 0.
\]
\quad The ideal $I$ has a $d$-\textit{linear
resolution} if
all maps $\varphi_i$, $i\geq 2$, are defined by matrices whose
entries are linear forms and all entries of $\varphi_1$ are forms of degree
$d$. The \textit{projective dimension} of $S/I$, denoted ${\rm
pd}_S(S/I)$, is equal to $g$. 
The {\it Castelnuovo--Mumford regularity\/} of $S/I$ ({\it
regularity} of $S/I$ for short) is defined as
$${\rm reg}(S/I)=\max\{j-i \mid b_{i,j}\neq 0\}.
$$
\quad If ${\rm
pd}_S(S/I)$ is the height of $I$, we say the ring $S/I$ and the ideal 
$I$ are \textit{Cohen--Macaulay}. 
\end{definition}
An element $f\in S$ is called a {\it zero-divisor\/} of $S/I$---as an
$S$-module---if there is
$\overline{0}\neq \overline{a}\in S/I$ such that
$f\overline{a}=\overline{0}$, and $f$ is called {\it regular\/} on
$S/I$ otherwise. The \textit{depth}
of $S/I$, denoted ${\rm depth}(S/I)$, is the largest integer $\ell$
so that there is a homogeneous 
sequence $g_1,\ldots,g_\ell$ in $\mathfrak{m}=(t_1,\ldots,t_s)$ with $g_i$ not a
zero-divisor of $S/(I,g_1,\ldots,g_{i-1})$ for all $1\leq i\leq\ell$.  

The Auslander--Buchsbaum formula \cite[p.~226]{monalg-rev}:
\begin{equation}\label{AB}
{\rm pd}_S(S/I) + {\rm depth}(S/I) = \dim(S)=s
\end{equation}
relates the depth and the projective dimension. 

\begin{theorem}\label{eagon-reiner-terai}  
Let $\mathcal{C}$ be a clutter and let $I_c(\mathcal{C})$ be its ideal
of covers. The following hold.
\begin{enumerate}
\item[\rm(a)] {\rm(Eagon-Reiner \cite{ER})} $I(\mathcal{C})$ 
is Cohen--Macaulay if and only if $I_c(\mathcal{C})$
has a linear resolution.
\item[\rm(b)] {\rm(Terai \cite{terai})} 
${\rm reg}(I(\mathcal{C}))=1+{\rm reg}(S/I(\mathcal{C}))={\rm pd}(
S/I_c(\mathcal{C}))$.
\end{enumerate}
\end{theorem}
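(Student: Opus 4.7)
The plan is to derive both parts from Alexander duality of squarefree monomial ideals. The first step is to identify $I_c(\mathcal{C})$ with the Alexander dual of $I(\mathcal{C})$. By Lemma~\ref{jul1-01}, the minimal vertex covers $C \in E(\mathcal{C}^\vee)$ index the associated primes of $I(\mathcal{C})$, giving the irredundant primary decomposition $I(\mathcal{C}) = \bigcap_{C \in E(\mathcal{C}^\vee)} \mathfrak{p}_C$ with $\mathfrak{p}_C = (t_i \mid t_i \in C)$; passing to the Alexander dual then yields $I(\mathcal{C})^\vee = I_c(\mathcal{C})$.

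For part~(b), the identity ${\rm reg}(I(\mathcal{C})) = 1 + {\rm reg}(S/I(\mathcal{C}))$ is a routine homological-shift computation, and the content lies in Terai's equality $1 + {\rm reg}(S/I(\mathcal{C})) = {\rm pd}(S/I_c(\mathcal{C}))$, equivalently ${\rm reg}(I) = {\rm pd}(S/I^\vee)$. My route would be through Hochster's formula: letting $\Delta$ be the independence complex of $\mathcal{C}$ and $W$ a squarefree multidegree, one has
$$
\beta_{i,W}(S/I(\mathcal{C})) = \dim_K \tilde{H}_{|W|-i-1}(\Delta[W]; K).
$$
Combinatorial Alexander duality $\tilde{H}_j(\Delta[W]; K) \cong \tilde{H}^{|W|-j-3}(\Delta^\vee[W]; K)$ then sets up a bijection between Betti numbers of $S/I(\mathcal{C})$ and of $S/I_c(\mathcal{C})$, and tracking the bidegrees identifies the extremal $j-i$ on one side with the maximal homological degree on the other, yielding the desired equality.

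For part~(a), I would derive the Eagon--Reiner theorem from Terai's formula together with the Auslander--Buchsbaum identity~\eqref{AB}. By Definition~\ref{regularity-socle-degree}, $S/I(\mathcal{C})$ is Cohen--Macaulay if and only if ${\rm pd}(S/I(\mathcal{C})) = {\rm ht}(I(\mathcal{C})) = \alpha_0(\mathcal{C})$. Cohen--Macaulayness forces $I(\mathcal{C})$ to be unmixed, so every minimal vertex cover of $\mathcal{C}$ has $\alpha_0(\mathcal{C})$ vertices and $I_c(\mathcal{C})$ is generated in the single degree $\alpha_0(\mathcal{C})$; applying Terai's formula with the roles swapped, ${\rm reg}(I_c(\mathcal{C})) = {\rm pd}(S/I(\mathcal{C}))$, the regularity then equals the common generator degree, which for an equigenerated ideal is exactly the criterion for a linear resolution. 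Conversely, a linear resolution of $I_c(\mathcal{C})$ forces all its generators to be of equal degree (hence $\mathcal{C}$ unmixed) and gives ${\rm reg}(I_c(\mathcal{C})) = \alpha_0(\mathcal{C})$, which by Terai returns ${\rm pd}(S/I(\mathcal{C})) = \alpha_0(\mathcal{C})$, i.e.\ Cohen--Macaulayness.

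The main obstacle is the combinatorial Alexander-duality isomorphism on induced subcomplexes and the careful bidegree bookkeeping needed to extract Terai's equality from Hochster's formula; this is classical material, so I would simply invoke \cite{terai} and standard references such as \cite[Chapter~6]{monalg-rev} rather than reprove it.
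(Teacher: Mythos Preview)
The paper does not prove this theorem at all: it is stated in the preliminaries with attributions to Eagon--Reiner \cite{ER} and Terai \cite{terai} and used as a black box throughout. Your sketch of the standard Alexander-duality proof via Hochster's formula is correct in outline, but there is nothing to compare it against; the appropriate ``proof'' here is simply the pair of citations, which you also end up invoking.
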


\begin{proposition}\label{linear-reg}
Let $\mathcal{C}$ be a clutter, let $I(\mathcal{C})$ be its edge 
ideal and let $d=\alpha(I(\mathcal{C}))$ be the least degree of a minimal
generator of $I(\mathcal{C})$. Then, the edge ideal $I(\mathcal{C})$ has a $d$-linear
resolution if and only if ${\rm reg}(S/I(\mathcal{C}))=d-1$.
\end{proposition}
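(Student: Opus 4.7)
The plan is to compare Betti diagrams and use only the definition of a $d$-linear resolution given in Definition~\ref{regularity-socle-degree} together with the formula ${\rm reg}(S/I)=\max\{j-i\mid b_{i,j}(S/I)\neq 0\}$. Write $I=I(\mathcal{C})$ and let $\mathbf{F}:\,0\to F_g\to\cdots\to F_1\to S\to S/I\to 0$ be the minimal graded free resolution of $S/I$, with $F_i=\bigoplus_j S(-j)^{b_{i,j}}$.

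For the forward direction, assume $I$ has a $d$-linear resolution. Then all entries of $\varphi_1$ are of degree $d$, so $F_1=S(-d)^{b_1}$; and since each $\varphi_i$ with $i\geq 2$ has linear entries, a straightforward induction shows $F_i=S(-d-i+1)^{b_i}$ for $i\geq 1$. Therefore $b_{i,j}(S/I)\neq 0$ forces $(i,j)=(0,0)$ or $j=i+d-1$, which immediately yields ${\rm reg}(S/I)=d-1$.

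For the converse, assume ${\rm reg}(S/I)=d-1$. First I would observe that every minimal generator of $I$ has degree exactly $d$: if $I$ had a minimal generator of degree $d'>d$, then $b_{1,d'}(S/I)\neq 0$ would give ${\rm reg}(S/I)\geq d'-1>d-1$, contradicting the hypothesis. Combined with $d=\alpha(I)$, this forces $F_1=S(-d)^{b_1}$, so $\varphi_1$ has entries of degree $d$ as required.

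It then remains to show that for every $i\geq 1$ one has $b_{i,j}(S/I)\neq 0\Rightarrow j=i+d-1$, which is precisely the condition that $\varphi_i$ has linear entries for $i\geq 2$. The upper bound $j\leq i+d-1$ is immediate from ${\rm reg}(S/I)=d-1$. For the lower bound $j\geq i+d-1$, I would argue by induction on $i$, the case $i=1$ being the generator-degree statement just proved; for the inductive step I would use the minimality of $\mathbf{F}$, which gives $\varphi_i(F_i)\subseteq\mathfrak{m}F_{i-1}$, so if $F_{i-1}$ is concentrated in degrees $\geq d+i-2$ then $\mathfrak{m}F_{i-1}$ is concentrated in degrees $\geq d+i-1$, forcing the generators of $F_i$ to lie in degrees $\geq d+i-1$ as well. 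The main (but routine) obstacle is precisely this lower bound on the minimal $i$-th syzygies; the rest is bookkeeping with the Betti numbers.
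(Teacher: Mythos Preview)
Your proof is correct and takes a genuinely different route from the paper's. The paper argues via Alexander duality: noting that $d=\alpha(I(\mathcal{C}))={\rm ht}(I_c(\mathcal{C}))$, it invokes the Eagon--Reiner theorem (Theorem~\ref{eagon-reiner-terai}(a)) to equate ``$I(\mathcal{C})$ has a linear resolution'' with ``$I_c(\mathcal{C})$ is Cohen--Macaulay,'' and then Terai's formula (Theorem~\ref{eagon-reiner-terai}(b)) together with the definition of Cohen--Macaulayness to translate this into ${\rm reg}(S/I(\mathcal{C}))=d-1$. Your argument, by contrast, works directly with the Betti table and the minimality condition $\varphi_i(F_i)\subseteq\mathfrak{m}F_{i-1}$, never mentioning the dual ideal. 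This is more elementary and in fact more general: your argument proves the equivalence for \emph{any} graded ideal $I$ with $\alpha(I)=d$ in a standard graded polynomial ring, whereas the paper's proof genuinely uses the squarefree monomial structure through Eagon--Reiner and Terai. The paper's route has the advantage of being a two-line application of machinery already set up for later use; yours has the advantage of showing that the proposition is really a basic fact about graded free resolutions rather than something special to clutters.
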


\begin{proof} Note that, by duality \cite[p.~222]{monalg-rev}, 
$d=\alpha(I(\mathcal{C}))={\rm
ht}(I_c(\mathcal{C}))=\alpha_0(\mathcal{C}^\vee)$. 

$\Rightarrow$) Assume that $I(\mathcal{C})$ has a $d$-linear
resolution. Then, by Theorem~\ref{eagon-reiner-terai}(a),
$I_c(\mathcal{C})$ is Cohen--Macaulay, that is,
$$
{\rm pd}(S/I_c(\mathcal{C}))={\rm ht}(I_c(\mathcal{C}))=d.
$$
\quad Hence, by Theorem~\ref{eagon-reiner-terai}(b),
$1+{\rm reg}(S/I(\mathcal{C}))={\rm pd}(
S/I_c(\mathcal{C}))=d$, and ${\rm reg}(S/I(\mathcal{C}))=d-1$.

$\Leftarrow$) Assume that ${\rm reg}(S/I(\mathcal{C}))=d-1$. 
Then, by Theorem~\ref{eagon-reiner-terai}(b), one has
$${\rm pd}(
S/I_c(\mathcal{C}))=1+{\rm reg}(S/I(\mathcal{C}))=d={\rm
ht}(I_c(\mathcal{C})).$$
\quad Thus, $I_c(\mathcal{C})$ is Cohen--Macaulay and, by 
Theorem~\ref{eagon-reiner-terai}(a), $I(\mathcal{C})$ has a $d$-linear
resolution.
\end{proof}

\begin{theorem}\label{konig-theorem}\cite[Theorem~10.2]{Har} 
If $G$ is a bipartite graph and $\alpha_0(G)$ is its covering number,
then $G$ has a matching with
$\alpha_0(G)$ edges. 
\end{theorem}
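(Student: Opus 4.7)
The plan is to prove König's theorem by explicitly constructing, from any maximum matching, a vertex cover of the same size. One direction is immediate: if $M$ is any matching and $C$ any vertex cover of $G$, then each edge of $M$ meets $C$, and distinct edges of $M$ contribute distinct vertices of $C$, so $|M|\le|C|$. In particular, the maximum matching size is at most $\alpha_0(G)$, and the task is to exhibit a matching achieving this bound.

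Fix a bipartition $V(G)=X\cup Y$ and a maximum matching $M$, and let $S\subseteq X$ be the set of $M$-unsaturated vertices of $X$. Call a vertex $v$ \emph{reachable} if there is an $M$-alternating path from some vertex of $S$ to $v$, starting with a non-$M$ edge and alternating thereafter. Let $T$ be the set of reachable vertices and put $T_X=T\cap X$ and $T_Y=T\cap Y$. I would take $C:=(X\setminus T_X)\cup T_Y$ as the candidate cover. To show $C$ covers $E(G)$, suppose for contradiction that some edge $\{x,y\}$ has $x\in T_X$ and $y\in Y\setminus T_Y$. If $\{x,y\}\notin M$, then appending $xy$ to the alternating path from $S$ to $x$ (which either ends at a vertex of $S$ or in an $M$-edge) yields an alternating path to $y$, contradicting $y\notin T_Y$. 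If $\{x,y\}\in M$, then by parity any alternating path reaching $x\in T_X\setminus S$ must end with an $M$-edge from a vertex of $T_Y$, and this $M$-edge is unique and equals $\{x,y\}$, so $y\in T_Y$, again a contradiction.

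To finish, I compare $|C|$ and $|M|$. Every $y\in T_Y$ is $M$-saturated, since otherwise the alternating path from $S$ to $y$ would be $M$-augmenting and contradict the maximality of $M$; its $M$-partner lies in $T_X\setminus S$, yielding a bijection between $T_Y$ and $T_X\setminus S$. The remaining edges of $M$ each have one endpoint in $X\setminus T_X$, and each such vertex is $M$-saturated since $S\subseteq T_X$. Hence $|M|=|T_Y|+|X\setminus T_X|=(|T_X|-|S|)+(|X|-|T_X|)=|C|$. Combined with the easy inequality, this produces a matching of size $\alpha_0(G)$.

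The main obstacle is the verification that $C$ is a vertex cover, specifically the case $\{x,y\}\in M$ with $x\in T_X\setminus S$: one must use the parity of alternating paths in a bipartite graph to conclude that the path to $x$ terminates through $y$. Bipartiteness is essential, for it is precisely the bipartite structure that keeps alternating paths from zig-zagging within one side of the partition and so guarantees the parity argument.
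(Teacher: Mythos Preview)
Your argument is the standard alternating-path proof of K\"onig's theorem and is correct. One small point worth stating explicitly: you implicitly include $S$ in $T_X$ (via paths of length $0$), which you use later when writing $S\subseteq T_X$; this is fine but could be said up front. The verification that $C$ is a cover and the counting $|M|=|T_Y|+|X\setminus T_X|$ are both sound.

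As for comparison: the paper does not give its own proof of this statement. Theorem~\ref{konig-theorem} is quoted as a classical result with a citation to Harary's textbook, and is used as a black box (for instance in Lemma~\ref{Covers-sym-dif} and Theorems~\ref{noLinear-noCM} and~\ref{not-linear-presented}) to conclude that bipartite graphs are K\"onig. So there is no alternative argument in the paper to contrast with yours; you have supplied a complete proof where the authors simply invoked the literature.
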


\begin{proposition}\label{unmixed-Koning-CM} 
Let $G$ be a K\"onig graph without isolated vertices. The following
hold. 
\begin{enumerate}
\item[\rm(a)] \cite[Proposition~15]{Ivan-Cruz-Reyes} $G$ is unmixed if
and only if $G$ has a perfect matching $P$ of  K\"onig type 
with the {\rm (P)} property.
\item[\rm(b)] \cite[Proposition~28]{Ivan-Cruz-Reyes} 
$G$ is Cohen--Macaulay if and only if $G$ has a perfect matching of  K\"onig type 
$P$ with the {\rm (P)} property such that there are no  
$4$-cycles with two edges in $P$.
\end{enumerate}
\end{proposition}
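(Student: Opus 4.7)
The plan is to prove each direction of (a) and (b) in turn, with (a) providing the structural scaffolding used throughout (b).

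For (a), the plan begins by fixing a minimum vertex cover $C$ with $|C|=k=\alpha_0(G)$ and a König matching $P=\{e_1,\ldots,e_k\}$ supplied by the hypothesis. A pigeonhole argument shows each $e_i$ contains exactly one vertex of $C$, so the $2k$ endpoints of $P$ split between $C$ and the stable set $V(G)\setminus C$. To upgrade $P$ to a perfect matching in the forward direction, I would suppose for contradiction that some $v\in V(G)\setminus V(P)$ exists; since $v$ is not isolated and $|C|=k$, one deduces $v\notin C$ and all neighbors of $v$ lie in $C$. Exchanging $v$ with a neighbor $u\in C$ yields a cover $(C\setminus\{u\})\cup\{v\}$ that fails to cover the edge of $P$ containing $u$, contradicting that $C$ was a cover or that $\alpha_0(G)=k$. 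The (P) property is then enforced by a similar swap: if $e=\{t_1,t_2\}\in P$ has edges $\{t_1,t_1'\}$ and $\{t_2,t_2'\}$ with $\{t_1',t_2'\}\notin E(G)$, one constructs by a double exchange a vertex cover of size less than $k$, contradicting $\alpha_0(G)=k$. The backward direction of (a) is cleaner: the matching $P$ forces $\alpha_0(G)\ge|P|$, while the (P) property lets one pick one endpoint from each $e\in P$ to form a cover of size exactly $|P|$, giving both $\alpha_0(G)=|P|$ and unmixedness of $G$.

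For (b), the forward direction uses that Cohen--Macaulayness implies unmixedness, so part (a) produces the matching $P$ with the (P) property. To rule out a $4$-cycle on vertices $t_1,t_2,t_3,t_4$ with $\{t_1,t_2\},\{t_3,t_4\}\in P$ and chords $\{t_2,t_3\},\{t_1,t_4\}\in E(G)$, I would exhibit a face of the independence complex whose link fails Reisner's criterion; equivalently, one builds two minimal vertex covers whose interaction forces $\mathrm{depth}(S/I(G))$ to drop strictly below $s-\alpha_0(G)$ via the Auslander--Buchsbaum formula~\eqref{AB}, contradicting Cohen--Macaulayness.

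The backward direction of (b) is the main obstacle. My plan is to prove vertex decomposability (and hence shellability and Cohen--Macaulayness) of the independence complex of $G$ by induction on $|V(G)|$: for a carefully chosen edge $e=\{t_1,t_2\}\in P$, one verifies that both the deletion $G\setminus t_1$ and the link $G\setminus N_G[t_1]$ inherit a König-type perfect matching with the (P) property and no $4$-cycles with two matching edges. The delicate bookkeeping is to check that the residual matching remains König type after these deletions, that the (P) property is preserved under restriction, and that no forbidden $4$-cycle configuration is created; this is where the combinatorial argument of Ivan--Cruz--Reyes concentrates its effort, and it is the step most likely to resist a short proof.
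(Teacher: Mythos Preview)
The paper does not prove this proposition; it is quoted from \cite{Ivan-Cruz-Reyes} without argument, so there is no in-paper proof to compare against. I therefore evaluate your sketch on its own merits.

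Your forward direction of (a) has a genuine gap: you never actually invoke unmixedness, and without it the conclusions fail (a König graph can certainly have vertices outside a König matching). Concretely, your exchange step ``$(C\setminus\{u\})\cup\{v\}$ fails to cover the matching edge through $u$'' is a true observation, but it contradicts nothing: nobody claimed this set was a cover, and its failure to be one says nothing about $C$ or about $\alpha_0(G)$. The clean argument is different. Every minimal vertex cover $D$ meets each of the $k$ disjoint edges of $P$, so $|D\cap V(P)|\ge k$; since $G$ is unmixed, $|D|=k$, forcing $D\subset V(P)$. Now if some $v\notin V(P)$ existed, then (as $v$ is not isolated) Lemma~\ref{existencia-cubiertas} provides a minimal cover containing $v$, contradicting $D\subset V(P)$. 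Similarly, for the {\rm (P)} property your ``double exchange producing a cover of size $<k$'' is not right: if $\{t_1',t_2'\}\notin E(G)$, extend this stable pair to a maximal stable set and take its complement $D$; then $t_1,t_2\in D$, so $D$ contains two vertices from the edge $\{t_1,t_2\}\in P$ plus at least one from each of the remaining $k-1$ edges, giving $|D|\ge k+1$ and contradicting unmixedness. The contradiction is a cover that is too \emph{large}, not too small.

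Your backward direction of (a) is essentially correct once phrased carefully: by Proposition~\ref{propertyP} every minimal cover meets each $e\in P$ in exactly one vertex, and since $P$ is perfect this forces $|C|=|P|$ for every minimal cover $C$, hence unmixedness (your phrasing ``lets one pick one endpoint'' suggests constructing a single cover, which is not what is needed). For (b), your plan---Cohen--Macaulay $\Rightarrow$ unmixed $\Rightarrow$ apply (a), then rule out the $4$-cycle via a depth/Reisner obstruction, and conversely prove vertex decomposability by induction using a well-chosen shedding vertex from $P$---is the standard route taken in \cite{Ivan-Cruz-Reyes}; the sketch is vague but not wrong, and the bookkeeping you flag (that the residual matching keeps the {\rm (P)} property and König type after deleting $N_G[t_1]$) is indeed where the work lies.
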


\begin{definition}\label{vwc}
An unmixed graph $G$ without isolated vertices is 
called \textit{very well-covered} if $2\alpha_0(G)=|V (G)|$.
\end{definition}

\begin{proposition}\label{vw-c} A graph $G$ is very well-covered if
and only if $G$ is unmixed and has a perfect matching $P$ of K\"onig
type.
\end{proposition}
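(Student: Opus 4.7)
The proof separates into two implications.

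\textbf{($\Leftarrow$)} The argument is immediate from the definitions. A perfect matching $P$ saturates every vertex of $G$, so $G$ has no isolated vertices, and $|V(G)| = 2|P| = 2\alpha_0(G)$ because $P$ is of K\"onig type. Together with the assumed unmixedness of $G$, this matches Definition~\ref{vwc}, and so $G$ is very well-covered.

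\textbf{($\Rightarrow$)} Suppose $G$ is very well-covered. By Definition~\ref{vwc}, $G$ is unmixed, has no isolated vertices, and $|V(G)| = 2\alpha_0(G)$. Any perfect matching $P$ of $G$ then satisfies $|P| = |V(G)|/2 = \alpha_0(G)$, so is automatically of K\"onig type; hence the entire task reduces to exhibiting a perfect matching of $G$. The plan is to obtain this matching by invoking the classical characterization of very well-covered graphs due to Favaron \cite{favaron} (see also Kimura--Terai \cite{Kimura-Terai}), which produces an explicit perfect matching satisfying structural conditions that, in the language of the present paper, imply the {\rm(P)} property; this also connects directly to Proposition~\ref{unmixed-Koning-CM}(a), since once $G$ is seen to be K\"onig, that proposition immediately furnishes a perfect matching of K\"onig type with the {\rm(P)} property.

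For a more self-contained route, the natural approach is to fix a minimum vertex cover $C$ of $G$, set $I = V(G) \setminus C$ (an independent set with $|C|=|I|=|V(G)|/2$), and form the bipartite subgraph $B$ on parts $C, I$ whose edges are the crossing edges of $G$; one then tries to show via Hall's theorem that $B$ has a perfect matching (which is at the same time a perfect matching of $G$). The intended mechanism is to convert a hypothetical Hall-failure $S \subseteq C$ with $|N_B(S)| < |S|$ into an independent set of $G$ of cardinality larger than $\alpha(G) = |V(G)|/2$, contradicting unmixedness. The main obstacle is that $C$ need not be an independent set of $G$ (equivalently, $G$ need not be bipartite), so the naive exchange $(I \setminus N_B(S)) \cup S$ need not be independent in $G$; controlling the internal edges of $C$ during the exchange is precisely the combinatorial core of Favaron's theorem, which is why I would ultimately cite that result rather than reprove it from scratch here.
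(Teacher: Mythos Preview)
Your proposal is correct and follows essentially the same approach as the paper: the paper's proof simply observes that the result ``follows readily from the fact that very well-covered graphs have a perfect matching \cite[Lemma~2.1]{Kimura-Terai},'' which is exactly the external input you invoke (via Favaron/Kimura--Terai) after reducing the forward direction to the existence of a perfect matching. Your extra discussion of a Hall-type self-contained argument and the connection to Proposition~\ref{unmixed-Koning-CM}(a) is not needed for the statement as written, but it does no harm.
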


\begin{proof} This result follows readily from the fact that very
well-covered graphs have a perfect matching \cite[Lemma~2.1]{Kimura-Terai}.
\end{proof}

\begin{proposition}\cite[Corollary~29]{Ivan-Cruz-Reyes}\label{CM-free-vertex} 
Let $G$ be a Cohen--Macaulay K\"onig graph without
isolated vertices, then $G$ has at least one pendant vertex.
\end{proposition}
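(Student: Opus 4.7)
The plan is to argue by contradiction, using Proposition~\ref{unmixed-Koning-CM}(b) as the starting point. Assume that every vertex of $G$ has degree at least $2$, i.e., $G$ has no free vertex. Then $G$ admits a perfect matching $P=\{e_1,\ldots,e_n\}$ of K\"onig type satisfying the {\rm (P)} property, with the additional property that no $4$-cycle of $G$ contains two edges of $P$. Writing $e_i=\{a_i,b_i\}$, my goal will be to exhibit such a forbidden $4$-cycle, yielding the required contradiction.

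The first step is the following key observation. Let $e=\{a,b\}$ be any edge of $P$. Since both $a$ and $b$ have degree at least $2$, there exist edges $\{a,c\}$ and $\{b,d\}$ distinct from $e$. Applying the {\rm (P)} property of $e$ to these two edges, we conclude that $\{c,d\}$ is also an edge of $G$. Now look at how $c$ and $d$ are matched by $P$. If $c$ and $d$ happen to be matched to each other, i.e.\ $\{c,d\}=e_j$ for some $j\neq i$, then the quadrilateral $a\!-\!c\!-\!d\!-\!b\!-\!a$ is a $4$-cycle of $G$ containing the two $P$-edges $e$ and $e_j$, contradicting the defining property of $P$. Thus, in the absence of a contradiction, $c$ must be matched to some $c'\neq d$ and $d$ to some $d'\neq c$, with $\{c,c'\}$ and $\{d,d'\}$ distinct edges of $P$.

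The next step is to iterate this construction to force the forbidden configuration. Starting from $e_1$ and using the (P) property on each new $P$-edge produced, one builds a sequence of distinct edges of $P$; at each stage, either the newly constructed edge of $G$ already closes into a $4$-cycle using two $P$-edges (and we stop), or it extends the chain by a fresh $P$-edge. Since $|P|=n$ is finite, the chain cannot extend indefinitely, so the forbidden closure must eventually occur. The main obstacle is to set up the bookkeeping so that the closure produced really involves two edges of $P$ (rather than merely a longer even cycle using only one $P$-edge). I expect this to be handled by working with a minimal counterexample in $n=\alpha_0(G)$, or equivalently by extracting a minimal alternating cycle in the auxiliary multigraph whose vertex set is $P$ and whose edges record adjacencies in $G\setminus P$; degenerate sub-cases, such as $c=d$ producing a triangle on $\{a,b,c\}$, will be absorbed by applying the {\rm (P)} property to the $P$-edge containing the shared neighbor.
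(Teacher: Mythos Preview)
The paper does not supply a proof of this proposition; it is quoted from \cite{Ivan-Cruz-Reyes}. So there is nothing to compare against, and I assess your argument on its own.

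Your setup via Proposition~\ref{unmixed-Koning-CM}(b) and the ``key observation'' in the second paragraph are correct: for a $P$-edge $\{a,b\}$, neighbours $c\in N(a)\setminus\{b\}$ and $d\in N(b)\setminus\{a\}$ exist and are distinct (a common neighbour would force $\{c,c\}\in E(G)$ by the {\rm (P)} property), and $\{c,d\}\in E(G)$; if $\{c,d\}\in P$ you get the forbidden $4$-cycle. The problem lies entirely in your third paragraph. You never specify the iteration, you explicitly flag the bookkeeping as an ``obstacle'', and the phrases ``I expect this to be handled'' and ``extracting a minimal alternating cycle'' are placeholders rather than arguments. As stated, nothing ensures that the chain of $P$-edges you produce is injective, nor that a repetition closes into a $4$-cycle rather than a longer even cycle with only one $P$-edge on each side. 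This is a genuine gap.

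One clean way to close it is to introduce an orientation. Fix a minimal vertex cover $C$ and write $e_i=\{x_i,y_i\}$ with $x_i\in C$, $y_i\notin C$ (possible by Proposition~\ref{propertyP}); then $\{y_1,\dots,y_n\}$ is stable, so every non-$P$ edge meeting $y_i$ has the form $\{y_i,x_j\}$. Form the digraph $D$ on $[n]$ with an arc $i\to j$ whenever $\{y_i,x_j\}\in E(G)$, $j\neq i$. If no $y_i$ is free, every vertex of $D$ has out-degree $\geq 1$, so $D$ contains a directed cycle; take one of minimal length, say $i_1\to i_2\to\cdots\to i_k\to i_1$. If $k=2$, the edges $\{x_{i_1},y_{i_1}\},\{y_{i_1},x_{i_2}\},\{x_{i_2},y_{i_2}\},\{y_{i_2},x_{i_1}\}$ give a $4$-cycle with two $P$-edges, a contradiction. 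If $k\geq 3$, apply the {\rm (P)} property to $e_{i_2}$ with the edges $\{y_{i_1},x_{i_2}\}$ and $\{y_{i_2},x_{i_3}\}$ to obtain $\{y_{i_1},x_{i_3}\}\in E(G)$, i.e.\ an arc $i_1\to i_3$ in $D$, yielding a shorter directed cycle and contradicting minimality of $k$. Hence some $y_i$ has degree $1$. The orientation coming from the vertex cover is precisely the bookkeeping device your sketch is missing.
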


In what follows $I$ is a monomial ideal of $S$ minimally generated 
by $\{t^{v_i}\}_{i=1}^q$. We say $I$ 
is \textit{linearly presented} if $\deg(t^{v_i})=d$ for all $i$ and 
the kernel of the presentation map $\psi\colon S^q\rightarrow S$, 
$e_i\mapsto t^{v_i}$, is generated by vectors whose entries 
are linear forms, where $e_i$ is the $i$-th unit vector in $S^q$. 

If 
$I$ is generated in degree $d$, the \textit{graph} of $I$, 
denoted $\mathcal{G}_I$, has 
vertex set $V(\mathcal{G}_I):=\{t^{v_i}\}_{i=1}^q$, 
and $\{u,v\}$ is an edge of $\mathcal{G}_I$ if and only if  
$u\neq v$ and $\deg({\rm lcm}(u,v))= d+1$. 
For all $u,v$ in $V(\mathcal{G}_I)$, $u \neq v$, let
$\mathcal{G}_I^{(u,v)}$ be the induced subgraph of $\mathcal{G}_I$  
with vertex set
$$
V(\mathcal{G}_I^{(u,v)})=\{t^{v_i}\mid 
t^{v_i}\mbox{ divides } {\rm lcm}(u, v)\}.
$$

\begin{proposition}\cite[Proposition~2.1,
Corollary~2.2]{Bigdeli-etal}\label{Bigdeli-Herzog}  
Let $I$ be a monomial ideal of $S$ generated in degree $d$. The
following conditions are equivalent.
\begin{enumerate}
\item[\rm(a)] The ideal $I$ is linearly
presented. 
\item[\rm(b)] $\mathcal{G}_I^{(u,v)}$ is connected for all
$u,v\in\{t^{v_i}\}_{i=1}^q$.
\item[\rm(c)]
For all $u, v\in\{t^{v_i}\}_{i=1}^q$ there is a path in $\mathcal{G}_I^{(u,v)}$ 
connecting $u$ and $v$ whose vertices are minimal generators of $I$ dividing 
${\rm lcm}(u,v)$.
\end{enumerate}
\end{proposition}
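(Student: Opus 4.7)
Begin with the easy equivalence (b)$\Leftrightarrow$(c). The implication (b)$\Rightarrow$(c) is immediate, since $u$ and $v$ are among the vertices of $\mathcal{G}_I^{(u,v)}$. For (c)$\Rightarrow$(b), given any two vertices $w_1,w_2$ of $\mathcal{G}_I^{(u,v)}$, both divide $\mathrm{lcm}(u,v)$, so $\mathcal{G}_I^{(w_1,w_2)}\subseteq\mathcal{G}_I^{(u,v)}$; applying (c) to the pair $(w_1,w_2)$ produces a connecting path inside $\mathcal{G}_I^{(u,v)}$. The real content lies in (a)$\Leftrightarrow$(b), which I plan to obtain by analyzing the multigraded pieces of the syzygy module $Z=\ker\psi$ of $\psi\colon S^q\to I$, $e_i\mapsto t^{v_i}$.

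For $M\in\mathbb{N}^s$, let $V_M=\{t^{v_i}\mid t^{v_i}\text{ divides }t^M\}$. A direct computation shows that $Z_M$ is the space of $\sum_{i\in V_M}c_i(t^M/t^{v_i})e_i$ with $c_i\in K$ and $\sum_i c_i=0$, so $\dim_K Z_M=|V_M|-1$ whenever $V_M\neq\emptyset$. The Taylor binomials $\tau_{ij}=(t^{v_{ij}}/t^{v_i})e_i-(t^{v_{ij}}/t^{v_j})e_j$, with $t^{v_{ij}}=\mathrm{lcm}(t^{v_i},t^{v_j})$, generate $Z$ as an $S$-module, and $\tau_{ij}$ is linear exactly when $\{t^{v_i},t^{v_j}\}\in E(\mathcal{G}_I)$. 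Because $t^{v_{ij}}\mid t^M$ iff both $t^{v_i},t^{v_j}\mid t^M$, the multidegree-$M$ component of the $S$-submodule generated by the linear Taylor syzygies is the $K$-span of the ``edge vectors'' $\{e_i-e_j:\{t^{v_i},t^{v_j}\}\in E(\mathcal{G}_I),\ i,j\in V_M\}$, i.e., the edge-space of the induced subgraph $\mathcal{G}_I[V_M]$; its $K$-dimension equals $|V_M|-c_M$, where $c_M$ is the number of connected components of $\mathcal{G}_I[V_M]$.

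It follows that $I$ is linearly presented iff $c_M=1$ for every $M$ with $|V_M|\geq 2$. For $|M|\leq d+1$ this is automatic: either $|V_M|\leq 1$, or $|M|=d+1$ and any two distinct $u,v\in V_M$ force $\mathrm{lcm}(u,v)=t^M$, so $\{u,v\}\in E(\mathcal{G}_I)$ and $\mathcal{G}_I[V_M]$ is complete. For $|M|\geq d+2$ the condition is equivalent to (b): specializing $t^M=\mathrm{lcm}(u,v)$ shows that ``$c_M=1$ always'' implies (b); conversely, for any $M$ with $u,v\in V_M$ one has $V_{\mathrm{lcm}(u,v)}\subseteq V_M$, so the connecting path in $\mathcal{G}_I^{(u,v)}=\mathcal{G}_I[V_{\mathrm{lcm}(u,v)}]$ supplied by (b) lies inside $\mathcal{G}_I[V_M]$.

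The main technical step is the dimension count $\dim_K Z_M=|V_M|-1$ together with the identification of the linear-syzygy contribution in multidegree $M$ as the edge-space of $\mathcal{G}_I[V_M]$; once these pieces of linear algebra over the Taylor resolution are in place, the equivalence (a)$\Leftrightarrow$(b) reduces to the bookkeeping in the previous paragraph. The only subtle point is to verify that $K$-spans of scaled Taylor generators really fill each $Z_M$, so that counting connected components of $\mathcal{G}_I[V_M]$ controls generation in a single stroke; this is standard but is where all the real work concentrates.
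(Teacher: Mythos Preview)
The paper does not actually prove this proposition: it is quoted verbatim from \cite{Bigdeli-etal} (Proposition~2.1 and Corollary~2.2 there) and used as a black box. So there is no ``paper's own proof'' to compare against.

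Your argument is correct and is essentially the standard one. The equivalence (b)$\Leftrightarrow$(c) is handled cleanly; the key observation that $\mathcal{G}_I^{(w_1,w_2)}\subseteq\mathcal{G}_I^{(u,v)}$ whenever $w_1,w_2$ divide $\mathrm{lcm}(u,v)$ is exactly what is needed. For (a)$\Leftrightarrow$(b), your multigraded computation is right: with the $\mathbb{N}^s$-grading in which $e_i$ has degree $v_i$, the piece $Z_M$ is identified with $\{(c_i)\in K^{V_M}:\sum c_i=0\}$, and the image of the submodule generated by linear Taylor syzygies is the span of the vectors $e_i-e_j$ over edges of $\mathcal{G}_I[V_M]$, whose dimension is $|V_M|-c_M$. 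One small point worth making explicit: you should observe that \emph{every} linear syzygy of $I$ lies in the $K$-span of the linear Taylor syzygies (because for $|M|=d+1$ the induced graph $\mathcal{G}_I[V_M]$ is complete, as you note), so the submodule $L$ generated by linear Taylor syzygies coincides with the submodule generated by all degree-$(d{+}1)$ elements of $Z$; this is what justifies equating ``$I$ is linearly presented'' with ``$L=Z$''. With that clarification the proof is complete.
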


Consider the following basic matrices \cite{birational}:
\begin{enumerate}
\item[\rm(a)] The 
\textit{syzygy matrix} of $I$, denoted
$\mathcal{S}(I)$, is the $q\times\binom{q}{2}$ matrix whose columns 
are the vectors of the form
$$
({{\rm lcm}(t^{v_i},t^{v_j})}/{t^{v_i}})e_i-({{\rm
lcm}(t^{v_i},t^{v_j})}/{t^{v_j}})e_j.
$$ 
Each column of $\mathcal{S}(I)$ defines a 
syzygy of $I$ because 
$(t^{v_1},\ldots t^{v_q})\mathcal{S}(I)=(0)$. 
\item[\rm(b)] The 
\textit{linear syzygy matrix} of $I$, denoted
$\mathcal{L}\mathcal{S}(I)$, is the $q\times|E(\mathcal{G}_I)|$ matrix whose columns are the vectors of the form
$t_\ell e_i-t_ke_j$ such that $t_\ell t^{v_i}=t_k{t^{v_j}}$. We set
$\mathcal{LS}(I)=(0)$ if $I$ has no linear syzygies.  
Note that $\mathcal{L}\mathcal{S}(I)$ is a submatrix of 
$\mathcal{S}(I)$. 
\item[\rm (c)] The \textit{numerical linear
syzygy matrix}, denoted by $\mathcal{N\!LS}(I)$, is the $\{0,\pm 1\}$-matrix 
obtained from  $\mathcal{L}\mathcal{S}(I)$ by making the
substitution $t_i=1$ for all $i$. This matrix has entries in
$\{0,\pm 1\}\subset \mathbb{Z}$ if 
the base field $K$ has characteristic $0$.  

\item[\rm(d)] The \textit{oriented graph} of $\mathcal{G}_I$ obtained 
from $\mathcal{G}_I$ by fixing an orientation for each edge of 
$\mathcal{G}_I$ is denoted by $\mathcal{D}_I$. 
\end{enumerate}

\begin{proposition}\label{syz} Let $I$ be a monomial ideal minimally generated by
$t^{v_1},\ldots,t^{v_q}$ and let $\mathcal{S}(I)$ be the syzygy
matrix 
of $I$. The following hold.
\begin{enumerate}
\item[\rm(a)] \cite[Theorem~3.4.1]{AL} The kernel of the presentation map 
$\psi\colon S^q\rightarrow S$, $e_i\mapsto t^{v_i}$, is generated by 
the columns of $\mathcal{S}(I)$.
\item[\rm(b)] Let $\varphi$ be a $q\times m$ submatrix of
$\mathcal{S}(I)$ whose columns minimally generate a graded $S$-module.
Then, ${\rm im}(\varphi)=\ker(\psi)$
if and only if ${\rm rank}(\varphi)=q-1$ 
and ${\rm ht}(I_{q-1}(\varphi))\geq 2$. 
\item[\rm(c)] If $I$ is generated by monomials of degree $d$, then 
$I$ is linearly presented if and only ${\rm rank}(\mathcal{L}\mathcal{S}(I))=q-1$ 
and ${\rm ht}(I_{q-1}(\mathcal{L}\mathcal{S}(I)))\geq 2$. 
\end{enumerate}
\end{proposition}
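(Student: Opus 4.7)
The proof would proceed in three parts. Part (a) is a direct citation from Adams--Loustaunau \cite[Theorem~3.4.1]{AL}: for any monomial ideal, the first syzygy module is generated by the pairwise $S$-polynomial syzygies, which are precisely the columns of $\mathcal{S}(I)$.

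For part (b), the plan is to invoke the Buchsbaum--Eisenbud acyclicity criterion for the two-step complex $S^m \xrightarrow{\varphi} S^q \xrightarrow{\psi} S$; this is indeed a complex since every column of $\varphi$ is a column of $\mathcal{S}(I)$, hence a syzygy of $I$. Because $S$ is a domain and $I = \operatorname{im}(\psi) \neq 0$, the map $\psi$ has rank $1$, so $\ker(\psi)$ has rank $q - 1$ and $I_1(\psi) = I$ has height $\geq 1$ automatically. The Buchsbaum--Eisenbud criterion then gives $\operatorname{im}(\varphi) = \ker(\psi)$ if and only if $\operatorname{rank}(\varphi) = q - 1$ and $\operatorname{depth}(I_{q-1}(\varphi)) \geq 2$. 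Since $S$ is Cohen--Macaulay, depth equals height for any ideal, yielding the stated equivalence.

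For part (c), I would apply (b) to $\varphi = \mathcal{LS}(I)$, using that $\operatorname{im}(\mathcal{LS}(I))$ is exactly the submodule of $\ker(\psi)$ generated by the linear syzygies; thus $I$ is linearly presented if and only if this image equals $\ker(\psi)$. The subtlety is that the columns of $\mathcal{LS}(I)$ may be $S$-linearly dependent, so they need not form a minimal generating set of their image. To address this, I would pass to a submatrix $\varphi'$ of $\mathcal{LS}(I)$ whose columns minimally generate $\operatorname{im}(\mathcal{LS}(I))$. By multilinearity of the determinant, the omitted columns (being $S$-combinations of those in $\varphi'$) contribute no new maximal minors, so $\operatorname{rank}(\mathcal{LS}(I)) = \operatorname{rank}(\varphi')$ and $I_{q-1}(\mathcal{LS}(I)) = I_{q-1}(\varphi')$. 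Applying (b) to $\varphi'$ then transfers the rank and height conditions to $\mathcal{LS}(I)$, giving the criterion.

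The main obstacle is the minimal-generation hypothesis imposed in (b): the acyclicity criterion itself is classical once one identifies the right Fitting ideals, but part (c) forces the detour through a minimal subset of columns of $\mathcal{LS}(I)$, and one must verify that this detour preserves both the rank and the ideal of maximal minors. This step uses only multilinearity of the determinant, but it warrants explicit justification since it is the bridge between the combinatorial object $\mathcal{LS}(I)$ and the homological criterion in (b).
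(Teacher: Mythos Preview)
Your argument is correct and follows the paper's approach. The paper's proof of (b) is essentially the same two-line appeal to the Buchsbaum--Eisenbud acyclicity criterion applied to the complex $S^m \xrightarrow{\varphi} S^q \xrightarrow{\psi} S$, and for (c) the paper simply writes ``This part follows from (a) and (b).''

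The one place you go beyond the paper is in (c), where you note that the columns of $\mathcal{LS}(I)$ need not minimally generate their image and therefore pass to a minimal submatrix $\varphi'$, checking via multilinearity that $\operatorname{rank}$ and $I_{q-1}$ are unchanged. This is a legitimate concern: the hypothesis in (b) explicitly asks for minimal generation, yet the paper applies (b) to $\mathcal{LS}(I)$ without comment. In practice the minimality hypothesis is not actually used in the paper's proof of (b)---the Buchsbaum--Eisenbud criterion is invoked for the complex as stated, with no appeal to minimality---so one could argue that (b) holds for any column submatrix of $\mathcal{S}(I)$ and that (c) then follows immediately. Your detour through $\varphi'$ is nonetheless a clean and correct way to close the apparent gap between the stated hypothesis of (b) and its use in (c), and your multilinearity argument for $I_{q-1}(\mathcal{LS}(I)) = I_{q-1}(\varphi')$ is the right justification.
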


\begin{proof} (b) As $\varphi$ is a submatrix of the syzygy matrix
of $I$, letting
$\psi=(t^{v_1},\ldots,t^{v_q})$, the sequence 
$$
S^m\stackrel{\varphi}{\longrightarrow}
S^q\stackrel{\psi}{\longrightarrow}S
$$
is a free complex of graded $S$-modules. Then, by the 
Buchsbaum--Eisenbud acyclicity criterion
\cite[Corollary~1]{BE}, it is seen that ${\rm im}(\varphi)=\ker(\psi)$
if and only if  
$$q={\rm rank}(\varphi)+{\rm
rank}(\psi)={\rm rank}(\varphi)+1 \mbox{ and }\ 
{\rm ht}(I_{q-1}(\varphi))\geq 2. 
$$
\quad (c) This part follows from (a) and (b).  
\end{proof}

\begin{lemma}\cite[cf.~Lemma~3.5]{birational}\label{chicapitanga}
Let $\mathcal{S}(I)$ be the syzygy matrix of a monomial ideal $I$. Then, any nonzero minor of
$\mathcal{S}(I)$ is a monomial with coefficient $\pm 1$, i.e.,
it has the form $\pm t^a$.
\end{lemma}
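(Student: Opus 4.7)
My plan is to prove, by induction on $k$, the stronger statement that every $k\times k$ minor of $\mathcal{S}(I)$ is either $0$ or of the form $\pm t^{a}$. For the base case $k=1$, each entry of $\mathcal{S}(I)$ is by construction $0$ or $\pm t^{c}$ for some monomial. For the inductive step, let $M$ be a $k\times k$ submatrix. The decisive structural input is that every column of $\mathcal{S}(I)$ has exactly two nonzero entries, $+t^{a}$ and $-t^{b}$ say, tied by the lcm identity
\[
t^{a}\,t^{v_i} \;=\; t^{b}\,t^{v_j} \;=\; \mathrm{lcm}(t^{v_i},t^{v_j})
\]
for the column corresponding to the pair $\{i,j\}$. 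If some column or row of $M$ carries at most one nonzero entry, Laplace expansion along that line writes $\det(M)=\pm t^{c}\cdot\det(M')$ for a $(k-1)\times(k-1)$ submatrix $M'$, and the induction hypothesis gives $\det(M)=0$ or $\pm t^{a}$.

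The remaining case, where every row and every column of $M$ has exactly two nonzero entries, is forced by a double count of the $2k$ nonzero entries whenever all rows and columns have at least two nonzeros. In that case I would associate to $M$ a graph $H$ on the $k$ chosen rows in which each chosen column becomes an edge joining its two nonzero rows. The graph $H$ is simple because distinct columns of $\mathcal{S}(I)$ correspond to distinct pairs $\{i,j\}$, and it is $2$-regular by hypothesis, so $H$ is a disjoint union of cycles of length $\geq 3$. After permuting rows and columns, $M$ becomes block diagonal with one block per cycle, and it suffices to show that the determinant of each cycle block is zero; the lemma is then consistent with the outcome, since $0$ is an allowed value.

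For a cycle block corresponding to a cycle $i_1-i_2-\cdots-i_n-i_1$ with edge columns $e_k=\{i_k,i_{k+1}\}$, only two permutations contribute to the Leibniz expansion: the two \emph{rotational} matchings that send each column $e_k$ either to its endpoint $i_k$ or to its endpoint $i_{k+1}$. Writing the two entries of column $e_k$ as $t^{a_k}$ and $-t^{b_k}$ with $t^{a_k}t^{v_{i_k}}=t^{b_k}t^{v_{i_{k+1}}}$, the telescoping $\prod_k t^{v_{i_k}}=\prod_k t^{v_{i_{k+1}}}$ yields the monomial identity $\prod_k t^{a_k}=\prod_k t^{b_k}$ in $S$, so the two Leibniz terms have equal monomial magnitude. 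The main obstacle will be the accompanying parity check: one must verify that the signs of the two terms are opposite rather than equal. I would settle this by combining the sign $(-1)^{n-1}$ of the cyclic permutation that relates the two matchings with the sign $(-1)^n$ produced by $n$ sign flips---one per column, each of which swaps a $+$ entry for a $-$ entry when the matching is rotated. The product $(-1)^{2n-1}=-1$ forces cancellation, the cycle-block determinant vanishes, and the induction closes.
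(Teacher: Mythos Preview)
Your argument is correct and complete. The induction, the double-count forcing both rows and columns to have exactly two nonzero entries, the cycle decomposition, the telescoping identity $\prod_k t^{a_k}=\prod_k t^{b_k}$ coming from $t^{a_k}t^{v_{i_k}}=t^{b_k}t^{v_{i_{k+1}}}$, and the parity check $(-1)^{n-1}\cdot(-1)^n=-1$ all go through exactly as you wrote. (The column sign conventions $\epsilon_k=\pm 1$ that you suppressed appear identically in both Leibniz terms and cancel, so they do not disturb the parity computation.)

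Your route differs from the paper's, however. The paper never computes the cycle-block determinant. Instead, assuming $\det(M)\neq 0$, it argues that the $n$ chosen columns cannot be supported entirely on the $n$ chosen rows: if they were, those columns would be syzygies of the $n$ monomials indexing the rows, and hence would lie in a module of rank $n-1$, contradicting $\det(M)\neq 0$. Therefore some column has a nonzero entry outside the chosen rows, so inside $M$ that column has at most one nonzero entry, and Laplace expansion plus induction finishes. In other words, the paper replaces your explicit vanishing calculation by the single algebraic fact $\operatorname{rank}\ker\psi=n-1$. Your approach is more self-contained and purely combinatorial (it never invokes the rank of the syzygy module), while the paper's is shorter and avoids the permutation/sign bookkeeping altogether.
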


\begin{proposition}\label{apr14-05} Let $I$ be a monomial ideal generated in
degree $d$ and let 
$c$ be the number of connected
components of the graph $\mathcal{G}_I$ of $I$. Then   
$${\rm rank}(\mathcal{LS}(I))={\rm rank}(\mathcal{N\!LS}(I))=q-c.
$$
\quad In particular, $\mathcal{G}_I$ is connected if and only if 
${\rm rank}(\mathcal{LS}(I))=q-1$. 
\end{proposition}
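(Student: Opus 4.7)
The plan is to read $\mathcal{N\!LS}(I)$ as the signed vertex--edge incidence matrix of the oriented graph $\mathcal{D}_I$, compute its rank from classical graph theory, and then transfer the rank equality to $\mathcal{LS}(I)$ via Lemma~\ref{chicapitanga}.

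First I would unpack the definition of $\mathcal{LS}(I)$. Each column is indexed by an edge $\{t^{v_i},t^{v_j}\}$ of $\mathcal{G}_I$ and has exactly two nonzero entries, namely $t_\ell$ in row $i$ and $-t_k$ in row $j$, where $t_\ell t^{v_i}=t_k t^{v_j}={\rm lcm}(t^{v_i},t^{v_j})$. Specializing $t_i\mapsto 1$, the column becomes $e_i-e_j$. Thus $\mathcal{N\!LS}(I)$ is (up to reordering and the chosen orientation) the incidence matrix of the oriented graph $\mathcal{D}_I$, whose underlying graph is $\mathcal{G}_I$. By the classical theorem that the signed incidence matrix of an oriented graph on $q$ vertices with $c$ connected components has rank $q-c$, we obtain $\mathrm{rank}(\mathcal{N\!LS}(I))=q-c$.

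Next, I would compare ranks via minors. Since $\mathcal{LS}(I)$ is a submatrix of $\mathcal{S}(I)$, every $n\times n$ minor of $\mathcal{LS}(I)$ is also an $n\times n$ minor of $\mathcal{S}(I)$; by Lemma~\ref{chicapitanga} such a minor is either zero or of the form $\pm t^a$ with $a\in\mathbb{N}^s$. Specializing $t_i\mapsto 1$ sends $\pm t^a$ to $\pm 1\neq 0$ and sends $0$ to $0$. Consequently, the substitution $t_i\mapsto 1$ induces a position-preserving bijection between the nonzero minors of $\mathcal{LS}(I)$ and the nonzero minors of $\mathcal{N\!LS}(I)$ of each given size. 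Therefore the largest size of a nonvanishing minor is the same for both matrices, so
\[
\mathrm{rank}(\mathcal{LS}(I))=\mathrm{rank}(\mathcal{N\!LS}(I))=q-c.
\]

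The final assertion is then immediate: $\mathcal{G}_I$ is connected if and only if $c=1$, which by the equation above is equivalent to $\mathrm{rank}(\mathcal{LS}(I))=q-1$.

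The only subtle point is the identification of $\mathcal{N\!LS}(I)$ with the incidence matrix of $\mathcal{D}_I$; once that is made precise (using that each column of $\mathcal{LS}(I)$ has exactly two nonzero entries, of opposite signs, in rows indexed by the endpoints of the corresponding edge), the rank computation is standard and the transfer to $\mathcal{LS}(I)$ follows cleanly from Lemma~\ref{chicapitanga}.
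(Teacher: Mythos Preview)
Your proof is correct and follows essentially the same approach as the paper: both identify $\mathcal{N\!LS}(I)$ with the signed incidence matrix of the oriented graph $\mathcal{D}_I$ to obtain rank $q-c$, and both use Lemma~\ref{chicapitanga} to transfer this rank to $\mathcal{LS}(I)$ via the specialization $t_i\mapsto 1$. The only cosmetic difference is that the paper argues the rank equality as two inequalities (one from Lemma~\ref{chicapitanga}, the reverse from the fact that specialization cannot increase determinantal rank), whereas you package both directions as a position-preserving bijection on nonzero minors; the paper also explicitly reduces to $K=\mathbb{Q}$ at the outset, but your argument is already field-independent so this step is not needed.
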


\begin{proof} By Lemma~\ref{chicapitanga}, we may assume
that $K=\mathbb{Q}$. Let $r_0={\rm rank}(\mathcal{LS}(I))$ and let
$M$ be a $r_0\times r_0$ submatrix of $\mathcal{LS}(I)$ with
$\det(M)\neq 0$. Then, by Lemma~\ref{chicapitanga}, $\det(M)=\pm t^a$
for some $t^a\in S$. Thus, making $t_i=1$ for all $i$, we get 
${\rm rank}(\mathcal{LS}(I))\leq {\rm rank}(\mathcal{N\!LS}(I))$. The
reverse inequality follows by definition of $\mathcal{N\!LS}(I)$. 
Let $\mathcal{D}_I$ be the digraph obtained from $\mathcal{G}_I$ by choosing an orientation for the 
edges of $\mathcal{G}_I$. The incidence matrix of $\mathcal{D}_I$ 
is equal to $\mathcal{N\!LS}(I)$ and this is a totally unimodular
matrix. Hence,
by \cite[Theorem~8.3.1]{godsil}, the rank of $\mathcal{N\!LS}(I)$ is
$q-c$. 
\end{proof}

\begin{lemma}{\rm(Inclusion-exclusion \cite[p.~38]{aigner})}\label{inc-exc}
Let $A_1,\ldots,A_n$ be finite subsets of $S$,
then
\begin{small}
\begin{equation*}
\left|\bigcup_{i=1}^n
A_i\right|=\sum_{i=1}^n|A_i|-\sum_{i<j}|A_i\cap A_j|+
\sum_{i<j<k}|A_i\cap A_j\cap
A_k|\mp\cdots+(-1)^{n-1}\left|\bigcap_{i=1}^n A_i\right|.
\end{equation*}
\end{small}
\end{lemma}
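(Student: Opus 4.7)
The plan is to establish the identity by a double-counting argument, showing that every element of $\bigcup_{i=1}^{n}A_i$ contributes exactly $1$ to each side.

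First I would fix an arbitrary element $x\in\bigcup_{i=1}^{n}A_i$ and let $k\geq 1$ denote the number of indices $i$ with $x\in A_i$. On the left-hand side, $x$ is counted exactly once. Next I would analyze the contribution of $x$ to each summand on the right-hand side: the element $x$ lies in an intersection $A_{i_1}\cap\cdots\cap A_{i_j}$ precisely when every $i_\ell$ is among the $k$ indices whose set contains $x$, so $x$ contributes to exactly $\binom{k}{j}$ of the $j$-fold intersections, with overall sign $(-1)^{j-1}$.

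Summing over $j$, the total contribution of $x$ to the right-hand side is
\[
\sum_{j=1}^{k}(-1)^{j-1}\binom{k}{j}.
\]
By the binomial theorem, $\sum_{j=0}^{k}(-1)^{j}\binom{k}{j}=(1-1)^{k}=0$, so $\sum_{j=1}^{k}(-1)^{j-1}\binom{k}{j}=\binom{k}{0}=1$. Hence both sides count each element of the union exactly once, yielding the identity. (Note the terms with $j>k$ vanish automatically since $x$ lies in no intersection of more than $k$ sets.)

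An alternative route would be induction on $n$: the case $n=2$ is immediate by partitioning $A_1\cup A_2$ into the three disjoint pieces $A_1\setminus A_2$, $A_2\setminus A_1$, $A_1\cap A_2$; for the inductive step one writes $\bigcup_{i=1}^{n}A_i=B\cup A_n$ with $B=\bigcup_{i=1}^{n-1}A_i$, applies the two-set formula, and invokes the induction hypothesis on $B$ and on $B\cap A_n=\bigcup_{i=1}^{n-1}(A_i\cap A_n)$; one then combines $\binom{n-1}{j-1}+\binom{n-1}{j}=\binom{n}{j}$-type identities to recover the correct coefficients. I expect the main (and only) obstacle in either approach to be the notational bookkeeping of signs and indices; there is no conceptual difficulty, which is consistent with the paper citing this as a standard fact from Aigner.
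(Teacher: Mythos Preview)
Your proof is correct; the double-counting argument via the binomial identity $\sum_{j=0}^{k}(-1)^{j}\binom{k}{j}=0$ is the standard one, and your inductive alternative is also valid. The paper itself does not prove this lemma at all: it is stated with a citation to Aigner and used as a black box, so there is no ``paper's own proof'' to compare against.
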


\section{The v-number of ideals of covers}\label{section-v-number-d}
In what follows to avoid repetitions, we continue to employ
the notations and
definitions used in Sections~\ref{intro-section} and
\ref{prelim-section}.

\begin{lemma}\label{jul13-24} Let $G$ be a graph with $E(G)\neq\emptyset$. Then, there
exists a vertex cover $C$ of $G$ such that $C$ has the exchange
property and $|C|=|V(G)|-1$. 
\end{lemma}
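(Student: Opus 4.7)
The plan is to produce the required vertex cover explicitly by removing a single vertex from $V(G)$. Since $E(G)\neq\emptyset$, I would fix any edge $\{t_k,t_\ell\}\in E(G)$ and set $C:=V(G)\setminus\{t_\ell\}$. Clearly $|C|=|V(G)|-1$.

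First I would verify that $C$ is a vertex cover: any edge of $G$ has two distinct endpoints, so at most one of them can equal $t_\ell$, meaning every edge meets $C$. Next, to see the exchange property, I would use the edge $\{t_k,t_\ell\}$ itself: note $t_k\in C$, $t_\ell\notin C$, and $(C\setminus\{t_k\})\cup\{t_\ell\}=V(G)\setminus\{t_k\}$, which is again a vertex cover by the same one-vertex-missing argument. Hence $C$ satisfies the exchange property.

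There is no real obstacle here; the statement is essentially a witness-construction. The only thing to be careful about is that the exchange property requires the exchanged set to be a \emph{vertex cover} (not necessarily minimal), which is exactly what the definition in the paper asks for, and both $V(G)\setminus\{t_\ell\}$ and $V(G)\setminus\{t_k\}$ trivially cover all edges.
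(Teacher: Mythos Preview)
Your proof is correct and is essentially identical to the paper's: the paper picks an edge $\{t_k,t_\ell\}$, sets $A=V(G)\setminus\{t_k,t_\ell\}$ and $C=A\cup\{t_k\}=V(G)\setminus\{t_\ell\}$, then notes $(C\setminus\{t_k\})\cup\{t_\ell\}=V(G)\setminus\{t_k\}$ is again a vertex cover. You are slightly more explicit in checking that removing one vertex leaves a vertex cover, which is fine.
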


\begin{proof} Pick an edge $\{t_k,t_\ell\}$ of $G$ and let $
A=V(G)\setminus\{t_k,t_\ell\}$. Then, $C=A\cup\{t_k\}$ is a vertex
cover of $G$ such that $(C\setminus\{t_k\})\cup\{t_\ell\}$ is a vertex
cover of $G$ and $|C|=s-1=|V(G)|-1$.
\end{proof}

\begin{proposition}{\rm(Exchange property)}\label{equiv-exchange} 
Let $C$ be a vertex cover of a graph $G$.
The following conditions are equivalent.
\begin{enumerate}
\item[\rm(a)] There exists $\{t_k,t_\ell\}\in E(G)$
such that $t_k\in C$, $t_\ell\notin C$, and $N_G(t_k)\setminus
C=\{t_\ell\}$.
\item[\rm(b)] There exists $\{t_k,t_\ell\}\in E(G)$
such that $t_k\in C$, $t_\ell\notin C$, and
$(C\setminus\{t_k\})\cup\{t_\ell\}$ is a vertex cover of $G$.
\item[\rm(c)] There exist 
$\{t_k,t_\ell\}\in E(G)$ and a vertex cover $C_1$ of $G$ 
such that $C\triangle C_1=\{t_k,t_\ell\}$, 
where $C\triangle C_1:=(C\setminus C_1)\cup(C_1\setminus
C)$ is the symmetric difference.
\end{enumerate}
\end{proposition}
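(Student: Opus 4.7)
The plan is to prove the cycle of implications (a) $\Rightarrow$ (b) $\Rightarrow$ (c) $\Rightarrow$ (a), since each step is natural once the correct candidate set is identified. Throughout, note that if $C$ is a vertex cover, then covering an edge $\{t_k,t_j\}$ with $t_k\in C$ means either $t_k$ or $t_j$ (or both) lie in the cover.

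First I would prove (a) $\Rightarrow$ (b). Given $\{t_k,t_\ell\}\in E(G)$ with $t_k\in C$, $t_\ell\notin C$, and $N_G(t_k)\setminus C=\{t_\ell\}$, set $C':=(C\setminus\{t_k\})\cup\{t_\ell\}$. Every edge not containing $t_k$ is already covered by some vertex of $C\setminus\{t_k\}\subset C'$. An edge containing $t_k$ has the form $\{t_k,t_j\}$ with $t_j\in N_G(t_k)$; if $t_j=t_\ell$ it is covered by $t_\ell\in C'$, and otherwise $t_j\in C$ by the hypothesis $N_G(t_k)\setminus C=\{t_\ell\}$, so $t_j\in C'$. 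Hence $C'$ is a vertex cover.

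Next, (b) $\Rightarrow$ (c) is immediate: the cover $C_1:=(C\setminus\{t_k\})\cup\{t_\ell\}$ satisfies $C\triangle C_1=\{t_k,t_\ell\}$ by construction.

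The main content is (c) $\Rightarrow$ (a). Suppose $C_1$ is a vertex cover with $C\triangle C_1=\{t_k,t_\ell\}$ and $\{t_k,t_\ell\}\in E(G)$. Since $C$ and $C_1$ both cover this edge but $t_k$ and $t_\ell$ lie on opposite sides of the symmetric difference, exactly one of them is in $C$; after relabeling we may assume $t_k\in C\setminus C_1$ and $t_\ell\in C_1\setminus C$. I would then show $N_G(t_k)\setminus C=\{t_\ell\}$: clearly $t_\ell\in N_G(t_k)\setminus C$. Conversely, take any $t_j\in N_G(t_k)\setminus C$. If $t_j\neq t_\ell$, then $t_j\notin\{t_k,t_\ell\}=C\triangle C_1$, so $t_j\in C_1$ would force $t_j\in C$, a contradiction; hence $t_j\notin C_1$. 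But also $t_k\notin C_1$, contradicting that $C_1$ covers the edge $\{t_k,t_j\}$. Therefore $t_j=t_\ell$, which gives (a).

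The step I expect to require the most care is (c) $\Rightarrow$ (a), because it is the only direction in which one must exploit the covering property of the second cover $C_1$ to rule out extra neighbors of $t_k$ lying outside $C$. Once the symmetric-difference bookkeeping is in place, though, the argument is a short case analysis.
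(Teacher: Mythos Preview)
Your proof is correct and follows essentially the same approach as the paper: the same cycle (a) $\Rightarrow$ (b) $\Rightarrow$ (c) $\Rightarrow$ (a), with the same key observations in each step. The only differences are cosmetic---e.g., in (a) $\Rightarrow$ (b) you split on whether an edge contains $t_k$, while the paper picks a generic edge and assumes $t_\ell\notin e$; in (c) $\Rightarrow$ (a) you argue by contradiction where the paper argues directly---but the underlying logic is identical.
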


\begin{proof} (a) $\Rightarrow$ (b) Letting 
$C_1=(C\setminus\{t_k\})\cup\{t_\ell\}$, we show that $C_1$ is a
vertex cover of $G$. Take $e=\{t_i,t_j\}$. We may assume 
$t_\ell\notin e$, and $t_i\in C$ because $C$ is a vertex 
cover of $G$. Hence, if $t_i\neq t_k$, then $e\cap C_1\neq\emptyset$.
Thus, we may assume $t_i=t_k$. Then, $t_j\in N_G(t_k)$, and
consequently $t_j\in C$ 
because $N_G(t_k)\setminus C=\{t_\ell\}$. Hence, $t_j\in
C\setminus\{t_k\}$ because $t_i\neq t_j$. Thus, $e\cap
C_1\neq\emptyset$.

(b) $\Rightarrow$ (c) Letting 
$C_1=(C\setminus\{t_k\})\cup\{t_\ell\}$, one has $C\setminus
C_1=\{t_k\}$ and $C_1\setminus C=\{t_\ell\}$. Thus, the symmetric
difference $C\triangle C_1$ is equal to $\{t_k,t_\ell\}$.

(c) $\Rightarrow$ (a) If $t_k\in C\setminus C_1$ (resp. $t_\ell\in
C\setminus C_1$), then $t_\ell\in C_1\setminus C$ (resp. $t_k\in
C_1\setminus C$) because $C_1$ (resp. $C$) is a vertex cover of $G$.
Thus, permuting $t_k$ and $t_\ell$ if necessary, we may assume that
$t_k\in C\setminus C_1$. Clearly, 
$\{t_\ell\}\subset N_G(t_k)\setminus C$. To show the other inclusion
take $t_i\in N_G(t_k)\setminus C$. Then, $\{t_i,t_k\}\in E(G)$, and
$t_i\in C_1$ because $t_k\notin C_1$ and $C_1$ is a vertex cover of
$G$. Thus, $t_i\in C_1\setminus C$, and $t_i=t_\ell$ because 
$t_i\neq t_k$.
\end{proof}

\begin{lemma}\label{v-number-dual} Let $I_c(G)$ be the ideal of
covers of a graph $G$ and let $\alpha_e(G)$ be its exchange number. The following hold.
\begin{enumerate} 
\item[\rm(a)] {\rm(cf.~\cite[Theorem~3.5]{v-number}, \cite[Remark~2.5]{Saha})} 
${\rm v}(I_c(G))=\alpha_e(G)-1$. 
\item[\rm(b)] \cite[Lemma~3.16]{v-number} ${\rm
v}(I_c(G))\geq\alpha_0(G)-1$.
\item[\rm(c)] ${\rm v}(I_c(G))=\alpha_0(G)-1$ 
if and only if there exists a vertex cover $C$ of $G$ with 
$\alpha_0(G)$ vertices satisfying the exchange property.
\end{enumerate}
\end{lemma}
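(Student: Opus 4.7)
The plan is to deduce all three parts from Theorem~\ref{v-number-clutters-graphs} applied to the blocker $\mathcal{C}=G^\vee$, whose edges are the minimal vertex covers of $G$ and, by blocker duality, whose own minimal vertex covers are exactly the edges of $G$. First I would translate the defining conditions of $\mathcal{A}_{G^\vee}$ into combinatorial statements about $G$. Observing that $A\cup\{t_i\}$ contains a minimal vertex cover of $G$ if and only if $A\cup\{t_i\}$ is itself a vertex cover of $G$, the requirement that $N_{G^\vee}(A)$ equal some edge $\{t_k,t_\ell\}\in E(G)$ becomes: $A\cup\{t_k\}$ and $A\cup\{t_\ell\}$ are vertex covers of $G$, while $A\cup\{t_i\}$ fails to be one for every other $t_i$. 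Combined with the stability of $A$ in $G^\vee$ (i.e., $A$ itself is not a vertex cover of $G$), a short edge-by-edge argument shows this is equivalent to the clean condition that $t_k,t_\ell\notin A$ and $\{t_k,t_\ell\}$ is the unique edge of $G$ not covered by $A$.

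With this characterization in hand, I would show two directions. (i) For every $A\in\mathcal{A}_{G^\vee}$, witnessed by $\{t_k,t_\ell\}$, the set $C:=A\cup\{t_k\}$ is a vertex cover of $G$ with the exchange property via Proposition~\ref{equiv-exchange}(b), satisfying $|C|=|A|+1$. (ii) Conversely, for every vertex cover $C$ of $G$ with the exchange property witnessed by $\{t_k,t_\ell\}$, the set $A:=C\setminus\{t_k\}$ lies in $\mathcal{A}_{G^\vee}$ (with the same witnessing edge) and $|A|=|C|-1$; here one must verify that the only edge $A$ leaves uncovered is $\{t_k,t_\ell\}$, which is immediate from the fact that $C$ and $(C\setminus\{t_k\})\cup\{t_\ell\}$ are both vertex covers. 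Taking infima in both directions and invoking Theorem~\ref{v-number-clutters-graphs} yields
$$
{\rm v}(I_c(G))\,=\,\min\{|A|:A\in\mathcal{A}_{G^\vee}\}\,=\,\alpha_e(G)-1,
$$
which is part (a).

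Parts (b) and (c) then follow almost immediately from (a). Since every vertex cover of $G$ has at least $\alpha_0(G)$ vertices, one has $\alpha_e(G)\geq\alpha_0(G)$, so ${\rm v}(I_c(G))=\alpha_e(G)-1\geq\alpha_0(G)-1$, proving (b). Equality ${\rm v}(I_c(G))=\alpha_0(G)-1$ is then equivalent to $\alpha_e(G)=\alpha_0(G)$, i.e., to the existence of a vertex cover of cardinality $\alpha_0(G)$ (automatically minimal) with the exchange property, which is precisely condition (c). The only delicate step in the whole argument is the clean combinatorial description of $\mathcal{A}_{G^\vee}$ via blocker duality; once that is secured, the matchup with exchange vertex covers and the resulting numerical identity are routine bookkeeping.
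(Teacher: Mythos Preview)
Your proposal is correct and follows essentially the same approach as the paper: both apply Theorem~\ref{v-number-clutters-graphs} to the blocker $G^\vee$ and set up the correspondence $A\leftrightarrow C=A\cup\{t_k\}$ between elements of $\mathcal{A}_{G^\vee}$ and vertex covers with the exchange property, then read off (b) and (c) from (a). The only minor difference is that for the inequality $\alpha_e(G)-1\geq{\rm v}(I_c(G))$ the paper verifies $(I_c(G)\colon t_A)=(t_k,t_\ell)$ via the embedding $S/(I_c(G)\colon t_A)\hookrightarrow S/I_c(G)$ and a pullback of associated primes, whereas you argue purely combinatorially via your ``unique uncovered edge'' characterization of $\mathcal{A}_{G^\vee}$; both are valid and equally short.
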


\begin{proof} (a) By Theorem~\ref{v-number-clutters-graphs}, 
there exist an associated prime $(t_k,t_\ell)$ of $I_c(G)$,
$\{t_k,t_\ell\}\in E(G)$, and a set of vertices $A\subset V(G)$ such that 
$$
(I_c(G)\colon t_A)=(t_k,\,t_\ell)
$$
and ${\rm v}(I_c(G))=\deg(t_A)$, where $t_A=\prod_{t_i\in A}t_i$.
Then, $t_kt_A$ and $t_{\ell}t_A$ are in $I_c(G)$, and consequently 
$C=A\cup\{t_k\}$ and $C_1=A\cup\{t_\ell\}$ are vertex covers of $G$.
Note that $\{t_k,t_\ell\}\cap A=\emptyset$ because $I_c(G)$ is
 a squarefree monomial ideal and $t_A\notin I_c(G)$. 
Hence, $C\setminus C_1=\{t_k\}$ and $C_1\setminus C=\{t_\ell\}$
and, by Proposition~\ref{equiv-exchange}, $C$ has the exchange
property. Thus, 
$$
{\rm v}(I_c(G))=\deg(t_A)=|C|-1\geq \alpha_e(G)-1,
$$
and consequently ${\rm v}(I_c(G))\geq \alpha_e(G)-1$. To show the
reverse inequality note that, by definition of $\alpha_e(G)$, there is a vertex
cover $C$ of $G$ satisfying the exchange property and
$\alpha_e(G)=|C|$. Hence, there exists $\{t_k,t_\ell\}\in E(G)$
such that $t_k\in C$, $t_\ell\notin C$, and
$(C\setminus\{t_k\})\cup\{t_\ell\}$ is a vertex cover of $G$. 
Letting $A:=C\setminus\{t_k\}$, the sets 
$A\cup\{t_k\}$ and $A\cup\{t_\ell\}$ are minimal vertex covers of $G$. Thus,
$t_kt_A\in I_c(G)$ and $t_\ell t_A\in I_c(G)$, that is, 
$\{t_k,t_\ell\}\subset(I_c(G)\colon t_A)$. From the embedding 
$$
S/(I_c(G)\colon t_A)\xhookrightarrow[]{\ \ t_A\ \ }S/I_c(G),
$$
and observing that any associated prime $\mathfrak{p}$ of $I_c(G)$ is of the form
$\mathfrak{p}=(e)$ for some $e\in E(G)$ it follows that $(t_k,t_\ell)$ is an associated
prime of $(I_c(G)\colon t_A)$ and this is the only associated prime
of $(I_c(G)\colon t_A)$. Thus, $(I_c(G)\colon t_A)=(t_k,t_\ell)$,
$$\alpha_e(G)=|C|=\deg(t_A)+1\geq {\rm v}(I_c(G))+1,$$
and consequently $\alpha_e(G)-1\geq {\rm v}(I_c(G))$.

(b) This follows from part (a) noticing
that $\alpha_e(G)\geq\alpha_0(G)$.

(c) The result follows from part (a) by 
noticing that $\alpha_e(C)=\alpha_0(G)$ if and only if there 
is a vertex cover $C$ of $G$ with $\alpha_0(G)$ vertices 
satisfying the exchange property. 
\end{proof}

Let $J:=I_c(G)$ be the ideal of covers of an
unmixed graph
$G$, let $E(G^\vee)=\{C_i\}_{i=1}^r$, and let $\mathcal{G}_J$ be the
graph of $J$. From Section~\ref{prelim-section}, recall 
that $\{C_i,C_j\}$ is an edge of $\mathcal{G}_J$ if 
$|C_i\cap C_j|=|C_i|-1$. 
For all $C_i,C_j$ in $V(\mathcal{G}_J)$, $i\neq j$, let
$\mathcal{G}_J^{(C_i,C_j)}$ denote the induced subgraph of $\mathcal{G}_J$ 
with vertex set
$$
V(\mathcal{G}_J^{(C_i,C_j)})=\{C_k \mid 
C_k\subset C_i\cup C_j\}.
$$

\begin{proposition}\label{Bigdeli-Herzog-j} Let $G$ be an unmixed graph. The
following conditions are equivalent.
\begin{enumerate}
\item[\rm(a)] The ideal $J$ is linearly
presented. 
\item[\rm(b)] $\mathcal{G}_J^{(C_i,C_j)}$ is connected for all $C_i,C_j
\in E(G^\vee)$, $i\neq j$. 
\item[\rm(c)]
For all $C_i, C_j\in E(G^\vee)$, $i\neq j$, there is a path in $\mathcal{G}_J^{(C_i,C_j)}$ 
connecting $C_i$ and $C_j$ whose vertices are contained in $C_i\cup C_j$.
\end{enumerate}
\end{proposition}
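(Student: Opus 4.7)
The plan is to deduce this from the general Proposition~\ref{Bigdeli-Herzog} applied to the monomial ideal $J=I_c(G)=(t_{c_1},\ldots,t_{c_r})$, by matching the monomial-theoretic graph $\mathcal{G}_J$ with its combinatorial incarnation on minimal vertex covers. No new idea is required; the whole content of the statement is that the dictionary works, and once this is set up the three equivalences transfer verbatim.

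First I would observe that the unmixedness hypothesis on $G$ gives $|C_i|=\alpha_0(G)=:d$ for all $i$, hence each generator $t_{c_i}=\prod_{t_k\in C_i}t_k$ has degree $d$. This is precisely the hypothesis needed to apply Proposition~\ref{Bigdeli-Herzog} to $J$.

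Next, I would establish the dictionary. Because the generators are squarefree, for any pair $C_i, C_j$ one has
\[
{\rm lcm}(t_{c_i},t_{c_j})=\prod_{t_k\in C_i\cup C_j}t_k,
\]
so $\deg({\rm lcm}(t_{c_i},t_{c_j}))=|C_i\cup C_j|$. The monomial-ideal edge condition $\deg({\rm lcm})=d+1$ translates to $|C_i\cup C_j|=d+1$, equivalently (by $|C_i|=|C_j|=d$) to $|C_i\cap C_j|=d-1=|C_i|-1$; this is exactly the edge condition defining $\mathcal{G}_J$ in Section~\ref{intro-section}. In the same way, for $C_k\in E(G^\vee)$ we have $t_{c_k}\mid{\rm lcm}(t_{c_i},t_{c_j})$ if and only if every variable appearing in $t_{c_k}$ appears in $t_{c_i}t_{c_j}$, which (again by squarefreeness) is equivalent to $C_k\subset C_i\cup C_j$. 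Hence the induced subgraph $\mathcal{G}_J^{(t_{c_i},t_{c_j})}$ appearing in Proposition~\ref{Bigdeli-Herzog} coincides with the induced subgraph $\mathcal{G}_J^{(C_i,C_j)}$ defined just above our statement.

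With the dictionary in place, the three conditions in Proposition~\ref{Bigdeli-Herzog-j} are the respective rewritings of the three conditions in Proposition~\ref{Bigdeli-Herzog}: (a) is literally the same definition of linear presentation, (b) is the connectedness of $\mathcal{G}_J^{(C_i,C_j)}=\mathcal{G}_J^{(t_{c_i},t_{c_j})}$ for all pairs, and (c) is the existence of a connecting path whose vertices are minimal generators dividing the lcm, rephrased as a path whose vertices are contained in $C_i\cup C_j$. Thus (a)$\Leftrightarrow$(b)$\Leftrightarrow$(c) follows immediately from Proposition~\ref{Bigdeli-Herzog}. The only potential pitfall is ensuring that the two competing notations for $\mathcal{G}_J$ (one on monomials, one on minimal vertex covers) really agree, but this is precisely what the dictionary above verifies, so there is no genuine obstacle.
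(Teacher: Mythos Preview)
Your proof is correct and follows the same approach as the paper, which simply states that the result ``follows readily from Proposition~\ref{Bigdeli-Herzog}.'' You have in fact supplied the dictionary between the monomial-theoretic and cover-theoretic descriptions of $\mathcal{G}_J$ and $\mathcal{G}_J^{(C_i,C_j)}$ that the paper leaves implicit, so your argument is, if anything, a more detailed execution of exactly what the authors intended.
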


\begin{proof} It follows readily from Proposition~\ref{Bigdeli-Herzog}.
\end{proof}

\begin{theorem}\label{eclipse-apr8-24} Let $G$ be an unmixed graph, let
$C_1,\ldots,C_r$ be the minimal vertex covers of $G$, and let
$\mathcal{G}_J$ be the graph of $J=I_c(G)$. The following hold.  
\begin{enumerate}
\item[\rm(a)] $\{C_i,C_j\}$ is an edge of $\mathcal{G}_J$ if and only
if there exists $\{t_k,t_\ell\}\in E(G)$ such that $t_k\in C_i$,
$t_\ell\notin C_i$ and $(C_i\setminus\{t_k\})\cup\{t_\ell\}=C_j$. In
particular, non-isolated vertices of $\mathcal{G}_J$ have the 
exchange property. 
\item[\rm(b)] $\mathcal{G}_J$ is not a discrete graph if and only if 
${\rm v}(I_c(G))=\alpha_0(G)-1$.
\item[\rm(c)] If $I_c(G)$ is linearly presented, then all 
minimal vertex covers of $G$ have the exchange property and 
${\rm v}(I_c(G))=\alpha_0(G)-1$.
\end{enumerate}
\end{theorem}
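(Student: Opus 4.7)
The plan is to prove the three parts in order, using Proposition~\ref{equiv-exchange} as the main bridge between the combinatorics of minimal vertex covers and the edges of $\mathcal{G}_J$, and then to deduce (b) from (a) combined with Lemma~\ref{v-number-dual}, and finally (c) from (a), (b), and Proposition~\ref{Bigdeli-Herzog-j}.

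For part (a), I would begin with the $(\Leftarrow)$ direction: assume there is an edge $\{t_k,t_\ell\}\in E(G)$ with $t_k\in C_i$, $t_\ell\notin C_i$, and $(C_i\setminus\{t_k\})\cup\{t_\ell\}=C_j$. Then $C_i\cup C_j=C_i\cup\{t_\ell\}$ has exactly $|C_i|+1$ elements, which by the definition of $\mathcal{G}_J$ given right before the theorem makes $\{C_i,C_j\}$ an edge. For the $(\Rightarrow)$ direction, suppose $\{C_i,C_j\}$ is an edge of $\mathcal{G}_J$. Since $G$ is unmixed, $|C_i|=|C_j|=\alpha_0(G)$, and $|C_i\cup C_j|=|C_i|+1$ forces $|C_i\setminus C_j|=|C_j\setminus C_i|=1$; write $C_i\setminus C_j=\{t_k\}$, $C_j\setminus C_i=\{t_\ell\}$. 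To verify that $\{t_k,t_\ell\}\in E(G)$, I would use the fact that $C_i$ is a \emph{minimal} vertex cover: removing $t_k$ from $C_i$ yields a set which fails to cover some edge, necessarily of the form $\{t_k,t'\}$ with $t'\notin C_i$. Since $C_j$ covers this edge and $t'\notin C_i$, we must have $t'\in C_j\setminus C_i=\{t_\ell\}$, so $t'=t_\ell$ and $\{t_k,t_\ell\}\in E(G)$. The ``in particular'' assertion follows immediately: if $C_i$ has a neighbor $C_j$ in $\mathcal{G}_J$, then the exchange property for $C_i$ (in the form (b) of Proposition~\ref{equiv-exchange}) is witnessed by this very edge $\{t_k,t_\ell\}$.

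For part (b), I would apply Lemma~\ref{v-number-dual}(c): since $G$ is unmixed, every minimal vertex cover already has exactly $\alpha_0(G)$ vertices, so the existence of a vertex cover of cardinality $\alpha_0(G)$ with the exchange property is equivalent to the existence of \emph{some} minimal vertex cover with the exchange property. By part (a), a minimal vertex cover $C_i$ has the exchange property if and only if $C_i$ is a non-isolated vertex of $\mathcal{G}_J$, i.e., if and only if $\mathcal{G}_J$ has at least one edge, equivalently, $\mathcal{G}_J$ is not discrete.

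For part (c), assume $J=I_c(G)$ is linearly presented. If $G$ has only one minimal vertex cover, the statement is vacuous for the exchange assertion and direct for the v-number (the case needs a brief note). Otherwise, given any minimal vertex cover $C_i$, pick $C_j\neq C_i$; by Proposition~\ref{Bigdeli-Herzog-j}, there is a path in $\mathcal{G}_J^{(C_i,C_j)}$ joining $C_i$ to $C_j$, and the first edge of this path makes $C_i$ non-isolated in $\mathcal{G}_J$. By part (a), $C_i$ has the exchange property. Hence every minimal vertex cover has the exchange property, and since at least one minimal vertex cover has it, part (b) gives ${\rm v}(I_c(G))=\alpha_0(G)-1$.

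The only place I anticipate any real subtlety is in the $(\Rightarrow)$ direction of (a), where I must use minimality of $C_i$ (not merely that it is a vertex cover of size $\alpha_0(G)$) to promote the symmetric-difference condition to the existence of the required edge of $G$; the unmixed hypothesis guarantees that the symmetric difference has size exactly two, but the edge itself comes from minimality. The remaining parts are straightforward bookkeeping once (a) is in hand.
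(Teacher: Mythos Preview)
Your proposal is correct and follows essentially the same route as the paper: part (a) is proved by the same minimality-plus-size argument (the paper phrases the $\Rightarrow$ direction by finding an edge disjoint from $C_i\cap C_j$ and using that both covers must hit it, which amounts to exactly your computation), and parts (b) and (c) are deduced from (a) via Lemma~\ref{v-number-dual}(c) and Proposition~\ref{Bigdeli-Herzog-j} just as you indicate. One small remark: your case split in (c) is unnecessary, since any graph with at least one edge has at least two minimal vertex covers, so $\mathcal{G}_J$ always has at least two vertices.
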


\begin{proof} (a) $\Rightarrow$) 
Let $\{C_i,C_j\}$ be an edge of $\mathcal{G}_J$
and let $d=\alpha_0(G)$. Then, by definition of $\mathcal{G}_J$, one
has $|C_i\cap C_j|=|C_i|-1=|C_j|-1=d-1$. Thus, we may assume
$$ 
C_i\cap C_j=\{t_1,\ldots,t_{d-1}\},\ C_i=\{t_1,\ldots,t_{d-1},t_k\},
\ C_j=\{t_1,\ldots,t_{d-1},t_\ell\},
$$
where $t_k\neq t_\ell$ and $t_\ell\notin C_i$. Then,
$(C_i\setminus\{t_k\})\cup\{t_\ell\}=C_j$. We now show that
$\{t_k,t_\ell\}$ is an edge of $G$. 
By the minimality of
the vertex cover $C_i$, there is $e\in E(G)$ such that
$e\cap\{t_1,\ldots,t_{d-1}\}=\emptyset$. As $C_i$ and $C_j$ are vertex
covers of $G$, we get $e=\{t_k,t_\ell\}$. 

$\Leftarrow$) Let $C_i,C_j$ be vertex covers of $G$ and let 
$\{t_k,t_\ell\}$ be an edge of $G$ such that $t_k\in C_i$,
$t_\ell\notin C_i$ and $(C_i\setminus\{t_k\})\cup\{t_\ell\}=C_j$.
Then, using the inclusion-exclusion formula, from the equality
$$
(C_i\cup C_j)\setminus\{t_k,t_\ell\}=C_i\cap C_j,
$$
we get $(|C_i|+|C_j|-|C_i\cap C_j|)-2=|C_i\cap C_j|$ and since
$d=|C_i|=|C_j|$, one has 
$$|C_i\cap C_j|=d-1=|C_i|-1=|C_j|-1,$$
that is, $\{C_i,C_j\}$ is an edge of $\mathcal{G}_J$.

(b) $\Rightarrow$) Pick an edge $\{C_i,C_j\}$ of $\mathcal{G}_J$. Then, by part (a),
$C_i$ has the exchange property. Hence, by
Lemma~\ref{v-number-dual}(c), we get ${\rm v}(I_c(G))=\alpha_0(G)-1$.

$\Leftarrow$) By Lemma~\ref{v-number-dual}(c), there is a vertex cover
$C$ of $G$ with $\alpha_0(G)$ vertices satisfying the exchange
property. Then, by part (a),
$C$ is not an isolated vertex of $\mathcal{G}_J$.

(c) The graph $G$ has at least two 
minimal vertex covers because $G$ has at least one edge, 
that is, the graph $\mathcal{G}_J$ has at
least two vertices. By Proposition~\ref{Bigdeli-Herzog-j}, the graph 
$\mathcal{G}_J$ is
connected. Then, by part (a), all 
minimal vertex covers of $G$ have the exchange property. Hence, by
part (b), we obtain the equality ${\rm v}(I_c(G))=\alpha_0(G)-1$.
\end{proof}

\begin{proposition}\label{saha->exchange} 
Let $G$ be a graph. If $N_G(t_k)\subset N_G[t_\ell]$ for some
$\{t_k,t_\ell\}\in E(G)$, then there exists a minimal vertex cover $C$
of $G$ such that $N_G(t_k)\setminus
C=\{t_\ell\}$. 
\end{proposition}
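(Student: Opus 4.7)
The plan is to construct $C$ directly by starting from the maximal possible vertex cover avoiding $t_\ell$ and then reducing. First I would observe that $D := V(G) \setminus \{t_\ell\}$ is automatically a vertex cover of $G$, because every edge has at most one endpoint equal to $t_\ell$. Next I would obtain $C$ as any minimal vertex cover of $G$ with $C \subset D$; such a $C$ exists by successively removing vertices from $D$ while maintaining the vertex-cover property.

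By construction $t_\ell \notin C$. Since $\{t_k, t_\ell\} \in E(G)$ and $C$ covers this edge, I get $t_k \in C$. The remaining step is to pin down $N_G(t_k) \setminus C$. Take any $t_i \in N_G(t_k)$ with $t_i \neq t_\ell$. Then $t_i \in N_G(t_k) \subset N_G[t_\ell]$ and $t_i \neq t_\ell$, so $t_i \in N_G(t_\ell)$, i.e.\ $\{t_i, t_\ell\} \in E(G)$. Since $t_\ell \notin C$ and $C$ is a vertex cover, we conclude $t_i \in C$. This shows $N_G(t_k) \setminus \{t_\ell\} \subset C$, equivalently $N_G(t_k) \setminus C \subset \{t_\ell\}$. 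Combined with $t_\ell \in N_G(t_k) \setminus C$ (which holds since $\{t_k, t_\ell\} \in E(G)$ and $t_\ell \notin C$), we obtain $N_G(t_k) \setminus C = \{t_\ell\}$, as desired.

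I do not foresee any real obstacle. The only point that requires a bit of care is the passage from the (possibly non-minimal) vertex cover $V(G) \setminus \{t_\ell\}$ to a minimal vertex cover $C$ still disjoint from $\{t_\ell\}$; this is standard (remove vertices one by one, keeping the vertex-cover property, until no further removal is possible), and the hypothesis $N_G(t_k) \subset N_G[t_\ell]$ is used only at the very end to force every neighbor of $t_k$ other than $t_\ell$ into $C$.
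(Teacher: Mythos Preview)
Your proof is correct and follows essentially the same approach as the paper: pick a minimal vertex cover $C$ not containing $t_\ell$, then use $N_G(t_k)\subset N_G[t_\ell]$ together with $N_G(t_\ell)\subset C$ to force $N_G(t_k)\setminus C=\{t_\ell\}$. The only difference is cosmetic: the paper simply asserts the existence of such a $C$, while you explicitly obtain it by shrinking $V(G)\setminus\{t_\ell\}$.
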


\begin{proof} Pick a minimal vertex cover $C$ of $G$ that does not
contain $t_\ell$. Then, $t_k\in C$. The inclusion 
$N_G(t_k)\setminus C\supset\{t_\ell\}$ is clear. To show the reverse
inclusion take $t_i\in N_G(t_k)\setminus C$, then $t_i\in N_G(t_k)$,
and consequently $t_i\in N_G[t_\ell]$. Note that $N_G(t_\ell)\subset
C$ because $t_\ell\notin C$. Hence, if $t_i\neq t_\ell$, then $t_i\in
C$, a contradiction. Thus, $t_i=t_\ell$, as required.
\end{proof} 

\begin{corollary}\label{apr10-24} Let $G$ be a graph and let $I_c(G)$
be its ideal of covers. The following
hold.
\begin{enumerate}
\item[\rm(a)] Suppose $I(G)$ is sequentially Cohen--Macaulay. Then, 
$$
{\rm reg}(S/I_c(G))={\rm
bight}(I(G))-1\geq {\rm v}(I_c(G))=\alpha_e(G)-1\geq\alpha_0(G)-1,
$$
with equality everywhere if and only if $G$ is unmixed.
\item[\rm(b)] \cite{terai} $S/I(G)$ is Cohen--Macaulay if and only if 
${\rm reg}(S/I_c(G))=\alpha_0(G)-1$.
\item[\rm(c)] \cite[Corollary~3.9]{Saha} 
If $S/I(G)$ is Cohen--Macaulay, then 
${\rm v}(I_c(G))=\alpha_0(G)-1$.
\end{enumerate}
\end{corollary}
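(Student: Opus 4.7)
The plan is to assemble the three parts as a corollary of results already cited in the excerpt; no new combinatorics is needed.

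For part (a), I would write the chain from right to left. The rightmost inequality $\alpha_e(G)-1\geq\alpha_0(G)-1$ is Lemma~\ref{v-number-dual}(b), and the middle equality ${\rm v}(I_c(G))=\alpha_e(G)-1$ is Lemma~\ref{v-number-dual}(a). The inequality ${\rm v}(I_c(G))\leq {\rm reg}(S/I_c(G))$ is Saha's Theorem~3.8, already invoked in the displayed chain \eqref{summarize-ineq} of the introduction. Finally, the leftmost equality ${\rm reg}(S/I_c(G))={\rm bight}(I(G))-1$ under the sequential Cohen--Macaulay hypothesis is the left equality in the inequality chain stated between \cite[Theorem~3.31]{edge-ideals} and \cite[Proposition~3.2]{v-number} just after \eqref{summarize-ineq}. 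For the ``equality everywhere iff $G$ is unmixed'' clause, equality of the outer terms forces ${\rm bight}(I(G))=\alpha_0(G)$, which by definition means every minimal vertex cover of $G$ has the same cardinality $\alpha_0(G)$, i.e.\ $G$ is unmixed; conversely, unmixedness gives ${\rm bight}(I(G))=\alpha_0(G)$, so the outermost terms coincide and every term in between is squeezed to this common value.

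For part (b), I would combine Eagon--Reiner (Theorem~\ref{eagon-reiner-terai}(a)) with Proposition~\ref{linear-reg} applied to the clutter $\mathcal{C}=G^\vee$: its edge ideal is $I(G^\vee)=I_c(G)$, and its minimal generating degree is $\alpha(I_c(G))=\alpha_0((G^\vee)^\vee)=\alpha_0(G)$. Thus $I(G)$ is Cohen--Macaulay iff $I_c(G)$ has a linear resolution iff $I_c(G)$ has an $\alpha_0(G)$-linear resolution iff ${\rm reg}(S/I_c(G))=\alpha_0(G)-1$.

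For part (c), I would simply apply (b) together with the universal lower and upper bounds on ${\rm v}(I_c(G))$: if $S/I(G)$ is Cohen--Macaulay, then by (b) we have ${\rm reg}(S/I_c(G))=\alpha_0(G)-1$, whence
\[
\alpha_0(G)-1\leq{\rm v}(I_c(G))\leq{\rm reg}(S/I_c(G))=\alpha_0(G)-1
\]
(the left inequality is Lemma~\ref{v-number-dual}(b) and the right is Saha's theorem), so ${\rm v}(I_c(G))=\alpha_0(G)-1$.

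There is essentially no obstacle here: the statement is a bookkeeping corollary that ties together Eagon--Reiner, Terai, Saha, Proposition~\ref{linear-reg}, and Lemma~\ref{v-number-dual}. The only small care needed is to verify that Proposition~\ref{linear-reg} is legitimately applicable to $I_c(G)$ by passing to the blocker clutter $G^\vee$, and to justify the ``iff $G$ is unmixed'' equivalence in (a) via the identity ${\rm bight}(I(G))=\alpha_0(G)\Leftrightarrow G\text{ unmixed}$.
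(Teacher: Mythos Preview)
Your proof is correct and part (a) matches the paper's argument exactly. For parts (b) and (c) you take slightly different but equally valid routes: for (b) the paper applies Terai's formula (Theorem~\ref{eagon-reiner-terai}(b)) to get ${\rm reg}(S/I_c(G))={\rm pd}_S(S/I(G))-1$ and then uses that Cohen--Macaulayness is ${\rm pd}={\rm ht}=\alpha_0(G)$, whereas you go through Eagon--Reiner plus Proposition~\ref{linear-reg}; for (c) the paper deduces the result from part (a) via the characterization ``Cohen--Macaulay $\Leftrightarrow$ sequentially Cohen--Macaulay and unmixed'', while you squeeze ${\rm v}(I_c(G))$ between $\alpha_0(G)-1$ and ${\rm reg}(S/I_c(G))$ using (b). Both routes are short and draw on the same pool of cited results, so the difference is purely organizational.
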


\begin{proof} (a) The equality ${\rm reg}(S/I_c(G))={\rm
bight}(I(G))-1$ was proved in \cite[Theorem~3.31]{edge-ideals}. 
Then, by Eq.~\eqref{summarize-ineq} and Lemma~\ref{v-number-dual}(a),
we get 
$${\rm reg}(S/I_c(G))={\rm
bight}(I(G))-1\geq {\rm v}(I_c(G))=\alpha_e(G)-1\geq
\alpha_0(G)-1,
$$
and equality holds everywhere if and only if ${\rm
bight}(I(G))=\alpha_0(G)$, and the latter equality holds if and only if
$G$ is unmixed. 

(b) By Theorem~\ref{eagon-reiner-terai}(b) and duality, one has 
${\rm
reg}(S/I_c(G))={\rm pd}_S(S/I(G))-1$. Hence, the result follows
recalling that $\alpha_0(G)$ is the height of $I(G)$. 

(c) The result follows from part (a) by 
recalling that $S/I(G)$ is Cohen--Macaulay if and only if $S/I(G)$ 
is sequentially Cohen--Macaulay and $I(G)$ is unmixed \cite[Corollary~6.3.31]{monalg-rev}.
\end{proof}

For use below, let $\mathcal{F}$ be the set of all maximal stable set 
of $G^\vee$ and let $\mathcal{A}$ be the set of all $A$ such that $A$ is a stable set 
of $G^\vee$ and $N_{G^\vee}(A)$ is a minimal vertex
cover of $G^\vee$.

\begin{proposition}\label{apr17-24} Let $G$ be a graph. 
Then, ${\rm v}(I_c(G))=\dim(S/I_c(G))$ if and only if 
$\mathcal{F}=\mathcal{A}$.
\end{proposition}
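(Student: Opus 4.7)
The plan is to interpret both sides of the equivalence in the clutter $G^{\vee}$ and reduce the claim to a size comparison. First I would compute $\dim(S/I_c(G))$. Assuming $E(G)\neq\emptyset$ (otherwise $I_c(G)$ is trivial and the statement is vacuous), the identity $(G^{\vee})^{\vee}=G$ shows that the minimal vertex covers of $G^{\vee}$ are precisely the edges of $G$, so $\alpha_0(G^{\vee})=2$ and hence $\dim(S/I_c(G))=s-2$. Next I would invoke Theorem~\ref{v-number-clutters-graphs} applied to the clutter $G^{\vee}$, whose edge ideal is $I_c(G)$, to obtain
$$
{\rm v}(I_c(G))=\min\{|A|\,:\, A\in\mathcal{A}\},
$$
since $\mathcal{A}$ is by definition the family $\mathcal{A}_{G^{\vee}}$ of Theorem~\ref{v-number-clutters-graphs}.

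The key structural step is to show the inclusion $\mathcal{F}\subseteq\mathcal{A}$ and the universal bound $|A|\leq s-2$ for every stable set $A$ of $G^{\vee}$. Let $A\in\mathcal{F}$; then $V(G)\setminus A$ is a minimal vertex cover of $G^{\vee}$, i.e.\ an edge $\{t_k,t_\ell\}$ of $G$, so $|A|=s-2$. Maximality of $A$ forces $A\cup\{t_j\}$ to contain an edge of $G^{\vee}$ for every $t_j\notin A$, hence $V\setminus A\subseteq N_{G^{\vee}}(A)$; the reverse inclusion holds because stability of $A$ makes $N_{G^{\vee}}(A)$ disjoint from $A$. Therefore $N_{G^{\vee}}(A)=V\setminus A$ is a minimal vertex cover of $G^{\vee}$, so $A\in\mathcal{A}$. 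The size bound is immediate since every stable set of $G^{\vee}$ extends to a maximal one, and all members of $\mathcal{F}$ have size $s-2$.

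Combining these observations gives the equivalence. One has ${\rm v}(I_c(G))=s-2$ iff $|A|\geq s-2$ for every $A\in\mathcal{A}$, iff every $A\in\mathcal{A}$ satisfies $|A|=s-2$ (using the universal bound), iff every $A\in\mathcal{A}$ is a maximal stable set of $G^{\vee}$, i.e.\ $\mathcal{A}\subseteq\mathcal{F}$. Together with the always-true inclusion $\mathcal{F}\subseteq\mathcal{A}$, this is equivalent to $\mathcal{F}=\mathcal{A}$.

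This argument has no real obstacle: the only delicate point is a careful use of the blocker duality $(G^{\vee})^{\vee}=G$ to identify the minimal vertex covers of $G^{\vee}$ with the edges of $G$, which pins down both $\alpha_0(G^{\vee})=2$ and the size of the maximal stable sets of $G^{\vee}$. Once these identifications are made, the rest is a direct application of Theorem~\ref{v-number-clutters-graphs}.
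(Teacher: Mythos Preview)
Your proof is correct and takes essentially the same approach as the paper: both rest on the observation that $G^{\vee}$ is unmixed because its minimal vertex covers are exactly the edges of $G$. The paper simply cites \cite[Corollary~3.7]{v-number} applied to $G^{\vee}$, whereas you have written out a self-contained argument for that corollary in this setting using only Theorem~\ref{v-number-clutters-graphs}; the chain $\mathcal{F}\subseteq\mathcal{A}$, the size identity $|A|=s-2$ for $A\in\mathcal{F}$, and the equivalence via the minimum in Theorem~\ref{v-number-clutters-graphs} are exactly what that cited result encodes.
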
 

\begin{proof} This follows by applying \cite[Corollary~3.7]{v-number}
to the clutter $G^\vee$ and noticing that $G^\vee$ is unmixed 
because the minimal covers of $G^\vee$ are precisely the edges of 
$G$.  
\end{proof}

\begin{proposition}\label{codi-graph-char} Let $G$ be a graph with $s$ vertices. 
The following conditions are
equivalent.

{\rm(a)} $G$ is a codi-graph.\quad {\rm(b)} ${\rm depth}(S/I(G))=1$.\quad
{\rm(c)} ${\rm reg}(S/I_c(G))=s-2$.
\end{proposition}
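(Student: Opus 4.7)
The plan is to split the three-way equivalence into two pieces: $(b)\Leftrightarrow(c)$ via Terai duality combined with the Auslander--Buchsbaum formula, and $(a)\Leftrightarrow(b)$ via Hochster's formula applied to the independence complex $\Delta(G)$, together with the observation that the $1$-skeleton of $\Delta(G)$ is exactly $\overline{G}$.

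For $(b)\Leftrightarrow(c)$, I would use that the blocker operation is involutive on clutters, so $I_c(G^\vee)=I(G)$. Applying Theorem~\ref{eagon-reiner-terai}(b) to $G^\vee$ then gives
$$
{\rm pd}_S(S/I(G))=1+{\rm reg}(S/I_c(G)).
$$
Combining with the Auslander--Buchsbaum formula \eqref{AB} yields the identity ${\rm depth}(S/I(G))=s-1-{\rm reg}(S/I_c(G))$, from which ${\rm depth}(S/I(G))=1$ iff ${\rm reg}(S/I_c(G))=s-2$ is immediate.

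For $(a)\Leftrightarrow(b)$, I first dispose of the trivial case $E(G)=\emptyset$, in which both $(a)$ and $(b)$ fail (then $\overline{G}=K_s$ is connected and $S/I(G)=S$ has depth $s$). Otherwise, view $I(G)$ as the Stanley--Reisner ideal of the independence complex $\Delta=\Delta(G)$. The key observation is that every singleton $\{t_i\}$ is a face of $\Delta$ and a pair $\{t_i,t_j\}$ is a face iff $\{t_i,t_j\}\in E(\overline{G})$; consequently the $1$-skeleton of $\Delta$ is precisely $\overline{G}$, so the connected components of $\Delta$ coincide with those of $\overline{G}$, and $G$ is a codi-graph iff $\tilde{H}_0(\Delta;K)\neq 0$. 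Hochster's formula then gives $\beta_{s-1,s}(S/I(G))=\dim_K\tilde{H}_0(\Delta;K)$, while $\beta_{s-1,j}(S/I(G))=0$ for $j\neq s$ (for $j<s$ one would need $\tilde{H}_{-1}(\Delta_W)\neq 0$ for some $W$ with $|W|=j$, impossible since each such $\Delta_W$ contains the singletons of $W$ and is hence nonempty; $j>s$ is excluded by size). Hence ${\rm pd}_S(S/I(G))\ge s-1$ iff $G$ is a codi-graph. Since in a simple graph $V(G)\setminus\{v\}$ is a vertex cover for every $v\in V(G)$, the set $V(G)$ is never a minimal vertex cover, so $\mathfrak{m}=(t_1,\ldots,t_s)\notin\mathrm{Ass}(I(G))$ by Lemma~\ref{jul1-01}, and thus ${\rm depth}(S/I(G))\ge 1$, i.e.\ ${\rm pd}_S(S/I(G))\le s-1$. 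Combining these two bounds gives ${\rm depth}(S/I(G))=1$ iff $G$ is a codi-graph.

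The main subtlety is verifying that the top row $\beta_{s-1,\cdot}(S/I(G))$ of the Betti table is concentrated in degree $s$; this is what reduces the question to the single Hochster computation $\tilde{H}_0(\Delta;K)$ and lets the $1$-skeleton identification close the argument. Degenerate cases (no edges, or isolated vertices producing a cone structure on $\Delta$) need a quick check but do not obstruct the scheme.
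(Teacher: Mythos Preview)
Your argument is correct. For $(b)\Leftrightarrow(c)$ you do exactly what the paper does: combine Terai's duality with Auslander--Buchsbaum. For $(a)\Leftrightarrow(b)$, however, you take a genuinely different route. The paper invokes Smith's formula
\[
{\rm depth}(S/I(G))=1+\max\{i\mid K[\Delta^i]\text{ is Cohen--Macaulay}\},
\]
then observes that $\Delta^1$ is the $1$-skeleton of every $\Delta^i$, so by the fact that Cohen--Macaulay complexes have Cohen--Macaulay $1$-skeleta, the maximum is $\geq 1$ iff $K[\Delta^1]$ is Cohen--Macaulay, which for a one-dimensional complex means exactly that $\overline{G}$ is connected. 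You instead go straight to Hochster's formula, computing $\beta_{s-1,s}(S/I(G))=\dim_K\tilde{H}_0(\Delta;K)$ and checking that $\beta_{s-1,j}=0$ for $j\neq s$; combined with ${\rm depth}(S/I(G))\geq 1$ (from $\mathfrak{m}\notin{\rm Ass}(I(G))$), this pins down ${\rm pd}=s-1$ exactly when $\Delta$---equivalently $\overline{G}$---is disconnected. Both arguments hinge on the same identification of the $1$-skeleton of $\Delta$ with $\overline{G}$; yours is more self-contained (Hochster is standard and the $\mathfrak{m}\notin{\rm Ass}$ step is elementary), while the paper's route via Smith's formula packages the skeleton analysis into a single citation and gives a slightly more structural picture of how depth degenerates. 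One expository nit: your sentence ``${\rm pd}_S(S/I(G))\ge s-1$ iff $G$ is a codi-graph'' is, strictly speaking, only justified once you have also ruled out ${\rm pd}=s$; since you do this in the next line, the logic is sound but would read more cleanly if you established ${\rm depth}\geq 1$ first.
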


\begin{proof} (a)$\Leftrightarrow$(b) Let $\Delta$ be the independence complex of $G$
consisting of all stable sets of $G$ and let $K[\Delta]$
be its Stanley--Reisner ring. Note that $K[\Delta]=S/I(G)$. By
\cite{dsmith}, one has 
\begin{equation}\label{apr16-24}
{\rm depth}(S/I(G))=1+\max\{i\, \vert\, K[\Delta^i]
\mbox{ is Cohen--Macaulay}\},
\end{equation}
where $\Delta^i=\{F\in\Delta\, \vert\, 
\dim(F)\leq i\}$ is the $i$-skeleton of $\Delta$ and $-1\leq
i\leq\dim(\Delta)$. Note that $\Delta^1$ is the 1-skeleton of 
$\Delta^i$ for all $1\leq i\leq\dim(\Delta)$. As the facets of
$\Delta^1$ are the edges of $\overline{G}$ and $\Delta^1$ has
dimension $1$,  by
\cite[Corollary 6.3.14]{monalg-rev}, $K[\Delta^1]$ is Cohen--Macaulay
if and only if $\overline{G}$ is connected. Using the fact that the
1-skeleton of a Cohen--Macaulay complex is Cohen--Macaulay
\cite[Proposition~6.3.17]{monalg-rev}, we obtain that $K[\Delta^i]$
is not Cohen--Macaulay for all $1\leq i\leq\dim(\Delta)$ if and only 
$\Delta^1$ is not Cohen--Macaulay. Hence, by Eq.~\eqref{apr16-24}, 
${\rm depth}(S/I(G))=1$ if and only if $\overline{G}$ is disconnected.

(b)$\Leftrightarrow$(c) From Auslander--Buchsbaum and Terai's 
formulas (Eq.~\eqref{AB}, Theorem~\ref{eagon-reiner-terai}(b)): 
$${\rm reg}(S/I_c(G))={\rm pd}_S(S/I(G))-1,\quad 
{\rm pd}_S(S/I(G))+{\rm depth}(S/I(G))=s,
$$
we obtain ${\rm reg}(S/I_c(G))=s-{\rm depth}(S/I(G))-1$, and the
equivalence of (b) and (c) follows. 
\end{proof}

\begin{theorem}\label{v-ci-maximum=s-2}
Let $G$ be a graph with $s$ vertices such that $E(G)\neq\emptyset$ 
and let $I_c(G)$ be its ideal of
covers. The following conditions are equivalent.
\begin{enumerate}
\item[\rm(a)] ${\rm v}(I_c(G))=s-2$. 
\item[\rm(b)] ${\rm v}(I_c(G))={\rm
reg}(S/I_c(G))=\dim(S/I_c(G))=s-2$ and $G$ is a codi-graph.
\item[\rm(c)] Every vertex cover of $G$ with
the exchange property has exactly $s-1$ vertices.
\item[\rm(d)] ${\rm v}(I_c(H))=|V(H)|-2$ for any induced subgraph $H$ of
$G$ with $E(H)\neq\emptyset$.
\item[\rm(e)] Any induced subgraph $H$ of
$G$ with $E(H)\neq\emptyset$ is connected.
\item[\rm(f)] $G$ is a complete multipartite graph.
\end{enumerate}
\end{theorem}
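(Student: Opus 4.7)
The plan is to establish the six equivalences by the following chain: (a) $\Leftrightarrow$ (b), (a) $\Leftrightarrow$ (c) $\Leftrightarrow$ (e) $\Leftrightarrow$ (f), and (a) $\Leftrightarrow$ (d). Most of the infrastructure is already in place from Lemmas~\ref{jul13-24} and \ref{v-number-dual}, Proposition~\ref{equiv-exchange}, Proposition~\ref{codi-graph-char}, and the string of inequalities \eqref{summarize-ineq}; the only genuinely new combinatorial step is the connection between exchange-property covers and induced copies of $K_1\cup K_2$.

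First I handle (a)$\Leftrightarrow$(b) and (a)$\Leftrightarrow$(c). From \eqref{summarize-ineq} the value ${\rm v}(I_c(G))=s-2$ forces all intermediate inequalities to be equalities, and ${\rm reg}(S/I_c(G))=s-2$ then gives the codi-graph condition via Proposition~\ref{codi-graph-char}; the converse is immediate. For (a)$\Leftrightarrow$(c), Lemma~\ref{v-number-dual}(a) rewrites (a) as $\alpha_e(G)=s-1$. Since $V(G)$ itself cannot have the exchange property (no $t_\ell\notin V(G)$ exists), every vertex cover with the exchange property has $\leq s-1$ vertices; combined with Lemma~\ref{jul13-24} which provides at least one of cardinality $s-1$, the condition $\alpha_e(G)=s-1$ is equivalent to the assertion in (c).

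Next I prove (c)$\Leftrightarrow$(e). For (c)$\Rightarrow$(e), argue contrapositively: if $H=G[A]$ has an edge $\{a,b\}$ but a vertex $c\in A$ in a different component of $H$, then $c$ is non-adjacent in $G$ to both $a$ and $b$; the set $C=V(G)\setminus\{a,c\}$ is then a vertex cover (since $\{a,c\}\notin E(G)$), and the edge $\{a,b\}$ witnesses the exchange property (the cover $(C\setminus\{b\})\cup\{a\}=V(G)\setminus\{b,c\}$ works because $\{b,c\}\notin E(G)$), giving $|C|=s-2$ and contradicting (c). For (e)$\Rightarrow$(c), again contrapositively: if $C$ is a vertex cover with the exchange property and $|C|<s-1$, pick distinct $t_\ell,t_m\in V(G)\setminus C$ with $\{t_k,t_\ell\}\in E(G)$ realizing the exchange. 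Since $t_\ell,t_m\notin C$, $\{t_\ell,t_m\}\notin E(G)$; and if $\{t_k,t_m\}\in E(G)$ then this edge is not covered by $(C\setminus\{t_k\})\cup\{t_\ell\}$, a contradiction. Hence $G[\{t_k,t_\ell,t_m\}]\cong K_1\cup K_2$, a disconnected induced subgraph with an edge.

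Finally, (e)$\Leftrightarrow$(f) is the classical characterization: $G$ is complete multipartite iff $\overline{G}$ is a disjoint union of cliques iff $G$ has no induced $\overline{P_3}=K_1\cup K_2$. An induced $K_1\cup K_2$ is itself an edge-containing disconnected induced subgraph, so (e) forces (f); conversely any induced subgraph of a complete multipartite graph is complete multipartite and, when nontrivial, connected, so (f) gives (e). For (a)$\Leftrightarrow$(d), the implication (d)$\Rightarrow$(a) is immediate with $H=G$; for (a)$\Rightarrow$(d), the equivalences already proved show $G$ is complete multipartite, hence every induced $H$ with $E(H)\neq\emptyset$ is complete multipartite and satisfies its own version of (a), yielding ${\rm v}(I_c(H))=|V(H)|-2$. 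The main subtlety is the three-vertex construction in the (c)$\Leftrightarrow$(e) step; once this combinatorial obstruction is identified, the rest of the theorem is essentially assembly.
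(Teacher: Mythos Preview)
Your proof is correct, and it takes a genuinely different (and in some ways cleaner) route than the paper.

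The paper proves (b)$\Rightarrow$(c) by invoking Proposition~\ref{apr17-24}, which characterizes ${\rm v}(I_c(G))=\dim(S/I_c(G))$ via the equality $\mathcal{F}=\mathcal{A}$ of maximal stable sets and stable sets whose neighbor set is a minimal cover; this requires unwinding the combinatorics of $G^\vee$. You bypass this entirely: your (a)$\Leftrightarrow$(c) argument just reads off $\alpha_e(G)=s-1$ from Lemma~\ref{v-number-dual}(a) and combines it with the trivial upper bound (no cover of size $s$ has the exchange property) and Lemma~\ref{jul13-24}. This is strictly more elementary.

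The second difference is in how (c) connects to (e). The paper reaches (e) only via the detour (c)$\Rightarrow$(a)$\Rightarrow$(d)$\Rightarrow$(e), where (a)$\Rightarrow$(d) extends exchange-property covers of $H$ to covers of $G$ and (d)$\Rightarrow$(e) appeals to the codi-graph characterization applied to $H$. You instead prove (c)$\Leftrightarrow$(e) directly in both directions via the three-vertex obstruction $K_1\cup K_2$; your (c)$\Rightarrow$(e) construction (the cover $V(G)\setminus\{a,c\}$ of size $s-2$) does not appear in the paper. Your (e)$\Rightarrow$(c) is essentially the contrapositive of the paper's argument for the same implication.

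What the paper's approach buys is a self-contained argument for (a)$\Rightarrow$(d) that does not rely on first establishing (f); what your approach buys is a shorter, more combinatorial path that avoids Proposition~\ref{apr17-24} and makes the $\overline{P_3}$ obstruction explicit throughout.
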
 

\begin{proof} (a)$\Rightarrow$(b) 
As ${\rm v}(I_c(G))=s-2$, equality holds everywhere in 
Eq.~\eqref{summarize-ineq}. That $G$ is a codi-graph follows
from Proposition~\ref{codi-graph-char}.

(b)$\Rightarrow$(c) Take any vertex cover $C$ of
$G$ with the exchange property. Then, there exists $e=\{t_k,t_\ell\}\in E(G)$
such that $t_k\in C$, $t_\ell\notin C$, and
$(C\setminus\{t_k\})\cup\{t_\ell\}$ is a vertex cover of $G$. Letting
$A=C\setminus\{t_k\}$, note that $A$ is a stable set of $G^\vee$
because $e\in E(G)$ and $e\cap A=\emptyset$. As $A\cup\{t_k\}$ and
$A\cup\{t_\ell\}$ are vertex covers of $G$, one has 
$N_{G^\vee}(A)\supset e$, and consequently $(I_c(G)\colon
t_A)\supset(e)$. Hence, from the proof of the second part of
Lemma~\ref{v-number-dual}(a), we obtain $(I_c(G)\colon
t_A)=(e)$. Therefore, $N_{G^\vee}(A)=e$ and consequently
$A\in\mathcal{A}$. Then, by Proposition~\ref{apr17-24},
$\mathcal{F}=\mathcal{A}$ and $A\in\mathcal{F}$. Thus, $A$ is a
maximal stable set of $G^\vee$ and $V(G)\setminus A$ is a minimal
vertex cover of $G^\vee$. Since the minimal vertex covers of $G^\vee$
are the edges of the graph $G$, we get 
$$
2=|V(G)\setminus A|=|V(G)|-|A|=s-(|C|-1),\ \mbox{therefore}\ |C|=s-1.
$$
\quad (c)$\Rightarrow$(a) By Lemma~\ref{v-number-dual}(a), there exists 
a vertex cover $C$ of $G$ with the exchange property such that 
${\rm v}(I_c(G))=|C|-1$. Thus, ${\rm v}(I_c(G))=s-2$.

Thus, we have proved that (a), (b), and (c) are equivalent
conditions.

(a)$\Rightarrow$(d) Let $H$ be an induced subgraph of $G$ with $E(H)\neq\emptyset$ and
let $D$ be any vertex cover of $H$ with the exchange property. Then,
there is $\{t_k,t_\ell\}\in E(H)$, $t_k\in D$, $t_\ell\notin D$, such
that $(D\setminus\{t_k\})\cup\{t_\ell\}$ is a vertex cover of $H$.
Letting $C=D\cup(V(G)\setminus V(H))$, note that $C$ is a vertex
cover of $G$. From the equality 
$$
(C\setminus\{t_k\})\cup\{t_\ell\}=((D\setminus\{t_k\})\cup\{t_\ell\})\cup
(V(G)\setminus V(H)),
$$
we obtain that $(C\setminus\{t_k\})\cup\{t_\ell\}$ is also a vertex
cover of $G$. Thus, $C$ has the exchange property
relative to $G$. 
Then, applying the equivalence of (a) and (c), we get
$s-2=|C|-1=|D|+(s-n)-1$, where $n=|V(H)|$, and consequently $|D|=n-1$.
Then, applying the equivalence of (a) and (c) to the graph $H$, 
we get that ${\rm v}(I_c(H))=n-2$.  

(d)$\Rightarrow$(e) Applying the equivalence of (a) and (b) to the
graph $H$, we obtain that the complement
$\overline{H}$ 
of $H$ is a
disconnected graph. Then, $H$ is a connected graph.

(e)$\Rightarrow$(c) Let $C$ be any vertex cover of $G$ with the
exchange property. Then, by Proposition~\ref{equiv-exchange}, 
there exists $\{t_k,t_\ell\}\in E(G)$
such that $t_k\in C$, $t_\ell\notin C$, and $N_G(t_k)\setminus
C=\{t_\ell\}$. It suffices to show that $C\cup\{t_\ell\}=V(G)$. We
argue by contradiction assuming that there is $t_i\in V(G)$ 
and $t_i\notin C\cup\{t_\ell\}$. Since the induced subgraph
$G[\{t_k,t_\ell,t_i\}]$ is connected, either $\{t_i,t_k\}\in E(G)$ 
or $\{t_i,t_\ell\}\in E(G)$. If $\{t_i,t_k\}\in E(G)$, then  
$t_i\in N_G(t_k)\setminus C=\{t_\ell\}$, a contradiction. 
If $\{t_i,t_\ell\}\in E(G)$, then $t_i\in C$ because $C$ is a vertex
cover of $G$, a contradiction.

Thus, we have proved that conditions (a) to (e) are equivalent.
Hence, to prove that (a) and (f) are equivalent, we need only show 
that (a) implies (f) and (f) implies (e).

(a)$\Rightarrow$(f) Let $H_1,\ldots,H_r$ be the connected
components of $\overline{G}$ and let $G_i=\overline{H_i}$ be its
complement for $i=1,\ldots,r$. Then, $G_1,\ldots,G_r$ are pairwise
disjoint graphs and $G=G_1*\cdots*G_r$. If $r=1$, 
by the equivalence (a) and (b), $\overline{G}=H_1$
is disconnected, a contradiction. Thus, $r\geq 2$. 
To show that $G$ is
multipartite we show that $G_i$ is a discrete graph for all $i$. 
We argue by contradiction assuming that $E(G_i)\neq\emptyset$ for some
$i$. As $G_i$ is an induced subgraph of $G$, by
applying the equivalence of (a), (d), and (b) to the graph $G_i$, we
obtain that $G_i$ is a connected codi-graph, and consequently $\overline{G_i}=H_i$
is disconnected, a contradiction.

(f)$\Rightarrow$(e) Let $G=G_1*\cdots*G_r$ be a complete multipartite
graph and let $H$ be an induced subgraph of $G$ with
$E(H)\neq\emptyset$. Pick an edge $\{t_i,t_j\}\in E(H)$. 
For any $t_k\in V(H) \setminus\{t_i,t_j\}$, one has 
that either $t_k\in N_H(t_i)$ or $t_k\in N_H(t_j)$ because $H$ is an induced
subgraph of $G$. Thus, $H$ is a connected graph.
\end{proof}

\begin{corollary}\label{vp=s-2}
If $G$ is a graph with $s$ vertices and ${\rm v}(I_c(G))=s-2$, then
\begin{enumerate}
\item[\rm(a)] $I_c(G)$ is Cohen--Macaulay,
\item[\rm(b)] ${\rm v}_{\mathfrak{p}}(I_c(G))=s-2$ for all
$\mathfrak{p}\in{\rm Ass}(I_c(G))$, and 
\item[\rm(c)] $(I_c(G)\colon\mathfrak{p})/I_c(G)$ is a principal ideal
of $S/I_c(G)$ for all $\mathfrak{p}\in{\rm Ass}(I_c(G))$.
\end{enumerate}
\end{corollary}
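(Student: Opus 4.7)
The plan is to exploit Theorem~\ref{v-ci-maximum=s-2}, which already forces $G$ to be a complete multipartite graph $G=G_1\ast\cdots\ast G_r$ with $r\geq 2$ and each $G_i$ discrete on $V(G_i)$. Consequently the minimal vertex covers of $G$ are precisely $C_i=V(G)\setminus V(G_i)$, the edges of $G$ are all pairs $\{t_k,t_\ell\}$ with $t_k,t_\ell$ in distinct parts, and by Lemma~\ref{jul1-01} the associated primes of $I_c(G)$ are exactly the ideals $(t_k,t_\ell)$ arising from these edges. With this explicit structure in hand, for (a) I would note that $\overline{G}$ is a vertex-disjoint union of complete graphs, hence chordal, so by Fr\"oberg's theorem $I(G)$ has a $2$-linear resolution; applying Theorem~\ref{eagon-reiner-terai}(a) to the clutter $G^\vee$, and using the blocker-duality identifications $I(G^\vee)=I_c(G)$ and $I_c(G^\vee)=I(G)$, yields that $I_c(G)$ is Cohen--Macaulay.

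For (b), fix $\mathfrak{p}=(t_k,t_\ell)\in{\rm Ass}(I_c(G))$ attached to an edge $e=\{t_k,t_\ell\}\in E(G)$ and set $A=V(G)\setminus e$, so $|A|=s-2$. The aim is to establish $(I_c(G):t_A)=\mathfrak{p}$. The inclusion $\mathfrak{p}\subset(I_c(G):t_A)$ is immediate because $A\cup\{t_k\}=V(G)\setminus\{t_\ell\}$ and $A\cup\{t_\ell\}=V(G)\setminus\{t_k\}$ are vertex covers of $G$, so $t_kt_A$ and $t_\ell t_A$ both lie in $I_c(G)$. For the reverse, if a monomial $t^\alpha$ satisfies $t^\alpha t_A\in I_c(G)$, then $A\cup{\rm supp}(t^\alpha)$ contains a minimal vertex cover; were ${\rm supp}(t^\alpha)$ disjoint from $\{t_k,t_\ell\}$, this cover would sit inside $V(G)\setminus e$ and fail to cover $e$, a contradiction. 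Hence ${\rm v}_{\mathfrak{p}}(I_c(G))\leq|A|=s-2$, and the reverse inequality ${\rm v}_{\mathfrak{p}}(I_c(G))\geq{\rm v}(I_c(G))=s-2$ is automatic.

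For (c), the squarefree primary decomposition $I_c(G)=\bigcap_{e'\in E(G)}(e')$ yields $(I_c(G):\mathfrak{p})=\bigcap_{e'\neq e}(e')$ via the standard colon formula for radical ideals, and part (b) already places $\overline{t_A}$ in $(I_c(G):\mathfrak{p})/I_c(G)$. The remaining step, where the complete multipartite hypothesis is crucially used, is to show that every monomial $t^\alpha\in\bigcap_{e'\neq e}(e')\setminus I_c(G)$ is divisible by $t_A$. Such $t^\alpha$ covers every edge of $G$ except $e$ while ${\rm supp}(t^\alpha)$ is not itself a vertex cover of $G$, forcing $\{t_k,t_\ell\}\cap{\rm supp}(t^\alpha)=\emptyset$. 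If some $t_j\in A$ failed to lie in ${\rm supp}(t^\alpha)$, then every neighbor of $t_j$ in $G$ would belong to ${\rm supp}(t^\alpha)$; but the complete multipartite structure guarantees $t_j$ is adjacent to at least one of $t_k,t_\ell$, regardless of which part contains $t_j$, contradicting the previous line. Hence $t_A\mid t^\alpha$, so $\overline{t_A}$ generates $(I_c(G):\mathfrak{p})/I_c(G)$ as an $S/I_c(G)$-module.

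The chief obstacle is part (a), which genuinely needs the external Fr\"oberg characterization of edge ideals with linear resolution (or, equivalently, a direct shellability argument for the Stanley--Reisner complex of $I_c(G)$); parts (b) and (c) are then essentially combinatorial verifications driven by the explicit description of minimal vertex covers furnished by Theorem~\ref{v-ci-maximum=s-2}.
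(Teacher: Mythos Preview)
Your proof is correct. The main difference from the paper lies in parts (b) and (c): the paper never unpacks the multipartite structure but instead repeatedly invokes the equivalence (a)$\Leftrightarrow$(c) of Theorem~\ref{v-ci-maximum=s-2}. For (b) it picks an $A$ realizing ${\rm v}_{\mathfrak{p}}(I_c(G))$, observes that $A\cup\{t_k\}$ is a vertex cover with the exchange property, and concludes $|A\cup\{t_k\}|=s-1$ directly from that equivalence; for (c) it takes any squarefree minimal generator $t_A$ of $(I_c(G):\mathfrak{p})/I_c(G)$, notes $A\cap\{t_k,t_\ell\}=\emptyset$, and again uses the exchange-property characterization to force $A\cup\{t_k,t_\ell\}=V(G)$, so $A$ is uniquely determined. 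Your route is more explicit---you build the witness $A=V(G)\setminus e$ by hand and verify the colon and divisibility statements via the adjacency structure of a complete multipartite graph---while the paper's is shorter because it recycles the already-proved equivalence. For (a) the two arguments coincide in substance: both reduce to $\overline{G}$ being chordal, with the paper citing Lyubeznik and you citing Fr\"oberg plus Eagon--Reiner applied to $G^\vee$, which is the same mechanism.
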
 

\begin{proof} (a) By Theorem~\ref{v-ci-maximum=s-2}, $G$ is a
multipartite graph. Hence, $\overline{G}$ is a disjoint union of
complete graphs. Thus, $\overline{G}$ is chordal and, by a result of 
Lyubeznik \cite[Theorem 1.5]{Lyu1}, $I_c(G)$ is Cohen--Macaulay. 

(b) Let $\mathfrak{p}\in{\rm Ass}(I_c(G))$. Then, $\mathfrak{p}=(t_k,t_\ell)$ for some
$\{t_k,t_\ell\}\in E(G)$ since $I_c(G)=\bigcap_{e\in E(G)}(e)$. 
Pick $A\subset V(G)$ such that $(I_c(G)\colon
t_A)=\mathfrak{p}$ and $\deg(t_A)={\rm v}_{\mathfrak{p}}(I_c(G))$.
Note that $A\cap\{t_k,t_\ell\}=\emptyset$ because $I_c(G)$ is
squarefree. Setting $C=A\cup\{t_k\}$, it follows that $C$ is a vertex
cover of $G$ with the exchange property. Hence, by
Theorem~\ref{v-ci-maximum=s-2}, $|C|=s-1$. Thus, 
${\rm v}_{\mathfrak{p}}(I_c(G))=|A|=|C|-1=s-2$.

(c) Let $t_A+I_c(G)$, $A\subset V(G)$, be a minimal generator of 
$N:=(I_c(G)\colon\mathfrak{p})/I_c(G)$ and let $\{t_k,t_\ell\}$ be an
edge of $G$ such that $\mathfrak{p}=(t_k,t_\ell)$. 
Note that $A\cap\{t_k,t_\ell\}=\emptyset$ because $I_c(G)$ is
squarefree. Then,
$A\cup\{t_k\}$ and $A\cup\{t_\ell\}$ are vertex covers of $G$ with the
exchange property. Hence, by Theorem~\ref{v-ci-maximum=s-2}, one has 
$A\cup\{t_k,t_\ell\}=V(G)$, 
and consequently any other minimal generator of $N$ is equal to
$t_A+I_c(G)$.
\end{proof}

\begin{proposition}\label{join-formula} Let $G=G_1*G_2$ be the join of two vertex disjoint
graphs $G_1$ and $G_2$. If $G$ is not a
complete multipartite graph, then 
\begin{enumerate}
\item[\rm(a)] ${\rm v}(I_c(G))=\min\{{\rm
v}(I_c(G_1))+|V(G_2)|,\, {\rm
v}(I_c(G_2))+|V(G_1)|\}$, and 
\item[\rm(b)] ${\rm v}(I_c(G))={\rm
v}(I_c(G_1))+|V(G_2)|$ if $G_2$ is a discrete graph.
\end{enumerate}
\end{proposition}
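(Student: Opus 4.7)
The plan is to translate everything into exchange numbers via the identity $\mathrm{v}(I_c(G)) = \alpha_e(G) - 1$ from Lemma~\ref{v-number-dual}(a), and then to compute $\alpha_e(G_1 * G_2)$ by running a case analysis on the edge that realizes the exchange property. The basic structural observation is that, because $G = G_1 * G_2$ contains every pair $\{t, t'\}$ with $t \in V(G_1)$ and $t' \in V(G_2)$, any vertex cover $C$ of $G$ is forced to contain $V(G_1)$ or $V(G_2)$ in its entirety.

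Fix a vertex cover $C$ of $G$ with the exchange property, witnessed by an edge $e = \{t_k, t_\ell\}$ with $t_k \in C$ and $t_\ell \notin C$, and split into three cases according to the location of $e$. If $e \in E(G_1)$, then $t_\ell \in V(G_1) \setminus C$ forces $V(G_2) \subset C$, and a short set-theoretic verification shows that $C \cap V(G_1)$ is a vertex cover of $G_1$ that inherits the exchange property via the same edge $e$; this gives $|C| \geq \alpha_e(G_1) + |V(G_2)|$, with equality achievable by $C = C^* \cup V(G_2)$ for any minimum exchange-bearing vertex cover $C^*$ of $G_1$. The case $e \in E(G_2)$ is symmetric. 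In the remaining (mixed) case, where $e$ joins $V(G_1)$ and $V(G_2)$, applying the vertex cover condition to both $C$ and $(C \setminus \{t_k\}) \cup \{t_\ell\}$ forces $V(G) \setminus \{t_\ell\} \subset C$, giving $|C| \geq |V(G)| - 1$.

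To finish part (a), I would assume both $G_1$ and $G_2$ have edges; the asymmetric case reduces to part (b). Lemma~\ref{jul13-24} provides $\alpha_e(G_i) \leq |V(G_i)| - 1$ for $i \in \{1, 2\}$, so the first two cases each contribute bounds that are at most $|V(G)| - 1$, and the mixed case is never binding. Minimizing and subtracting $1$ yields formula (a). For part (b), discreteness of $G_2$ makes the case $e \in E(G_2)$ vacuous; the hypothesis that $G$ is not a complete multipartite graph forces $G_1$ to have an edge (since a discrete graph is complete multipartite, and a join of two complete multipartite graphs is complete multipartite), so the case $e \in E(G_1)$ is available, achievable, and dominates the mixed case as before, yielding $\alpha_e(G) = \alpha_e(G_1) + |V(G_2)|$.

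The delicate point is verifying that, in the ``edge in $E(G_1)$'' case, $C \cap V(G_1)$ truly inherits the exchange property as a cover of $G_1$ and not merely as a restriction of a cover of $G$. This boils down to the identity $((C \cap V(G_1)) \setminus \{t_k\}) \cup \{t_\ell\} = ((C \setminus \{t_k\}) \cup \{t_\ell\}) \cap V(G_1)$ together with the observation that intersecting a vertex cover of $G$ with $V(G_1)$ always yields a vertex cover of $G_1$. Once this is in hand, the remaining obligations are routine bookkeeping.
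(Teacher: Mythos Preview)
Your proof is correct and follows essentially the same architecture as the paper's: translate to exchange numbers via Lemma~\ref{v-number-dual}(a), split on the location of the witnessing edge, and bound each case. The one substantive difference is in handling the mixed case where $e$ crosses between $V(G_1)$ and $V(G_2)$. The paper shows (as you do) that such a $C$ must satisfy $|C|=|V(G)|-1$, but then invokes Theorem~\ref{v-ci-maximum=s-2} to conclude that $G$ would be complete multipartite, a contradiction---thereby \emph{ruling out} the mixed case for the minimizing $C$. You instead keep the mixed case alive and observe directly from Lemma~\ref{jul13-24} that $\alpha_e(G_i)+|V(G_{3-i})|\leq |V(G)|-1$, so the mixed bound is never strictly smaller than the other two and can be ignored when taking the minimum. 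Your route is a bit more self-contained, since it avoids appealing to the full characterization in Theorem~\ref{v-ci-maximum=s-2}; the paper's route makes the role of the ``not complete multipartite'' hypothesis more visible. One small wording point: under the paper's definition a discrete graph is not itself complete multipartite (it requires $k\geq 2$), but your conclusion for (b) is still correct, since if both $G_1$ and $G_2$ were discrete then $G_1*G_2$ would be complete bipartite.
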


\begin{proof} (a) 
We set $V_i=V(G_i)$, $i=1,2$, and $s=|V(G)|$. First we show the
inequalities:
\begin{equation}
\alpha_e(G)\leq 
\begin{cases}\label{apr29-24-2}
\alpha_e(G_1)+|V_2|&\mbox{if } E(G_1)\neq\emptyset,\\
\alpha_e(G_2)+|V_1|&\mbox{if } E(G_2)\neq\emptyset.
\end{cases}
\end{equation}
\quad Assume that $E(G_1)\neq\emptyset$. By Lemma~\ref{v-number-dual}, 
there is a vertex cover $D$ of $G_1$ with the exchange
property such that $|D|=\alpha_e(G_1)$.
Then, there exists $\{t_i,t_j\}\in E(G_1)$
such that $t_i\in D$, $t_j\notin D$, and
$(D\setminus\{t_i\})\cup\{t_j\}$ is a vertex cover of $G_1$. Then,  
$$
D\cup V_2\ \mbox{ and }\ ((D\cup V_2)\setminus\{t_i\})\cup\{t_j\}
$$
are vertex covers of $G$. Thus, $D\cup V_2$ is a vertex cover of
$G$ with the exchange property, and
\begin{equation*}
\alpha_e(G_1)+|V_2|=|D|+|V_2|=|D\cup V_2|\geq \alpha_e(G).
\end{equation*}
\quad If $E(G_2)\neq\emptyset$, by a similar argument, we obtain
$\alpha_e(G)\leq \alpha_e(G_2)+|V_1|$.

There is a vertex cover $C$ of $G$ with the exchange
property such that $|C|=\alpha_e(G)$ (Lemma~\ref{v-number-dual}).
Then, there exists $\{t_k,t_\ell\}\in E(G)$
such that $t_k\in C$, $t_\ell\notin C$, and
$(C\setminus\{t_k\})\cup\{t_\ell\}$ is a vertex cover of $G$. We claim
that $\{t_k,t_\ell\}\in E(G_i)$ for some $i\in\{1,2\}$. We argue by
contradiction assuming that $t_k\in V_1$ and $t_\ell\in V_2$ (the case
$t_\ell\in V_1$ and $t_k\in V_2$ can be treated similarly). If
$|C|=s-1$, then ${\rm v}(I_c(G))=s-2$ and, by
Theorem~\ref{v-ci-maximum=s-2}, $G$ is a complete multipartite graph,
a contradiction. Thus, $|C|\leq s-2$ and $C\subsetneq
V(G)\setminus\{t_\ell\}$. Pick $t_i\in V(G)\setminus\{t_\ell\}$,
$t_i\notin C$. Noticing that $V_1\subset C$ because 
$t_\ell\in V_2\setminus C$ and $G=G_1*G_2$,
we get that $t_i\notin V_1$ and $t_i\in V_2$. Then, $\{t_i,t_k\}\in E(G)$ and consequently
$t_i\in (C\setminus\{t_k\})\cup\{t_\ell\}$ because the latter is a 
vertex cover of $G$, a contradiction. This proves that
$\{t_k,t_\ell\}\in E(G_i)$ for some $i\in\{1,2\}$. 

We may assume $\{t_k,t_\ell\}\in E(G_1)$ (the case $\{t_k,t_\ell\}\in
E(G_2)$ can be treated similarly). Note that $C\supset V_2$ because
$t_\ell\notin C$ and $t_\ell\in V_1$. Then, 
$$
C\setminus V_2\ \mbox{ and }\ ((C\setminus
V_2)\setminus\{t_k\})\cup\{t_\ell\}
$$
are vertex covers of $G_1$, $C\setminus V_2$ is a vertex cover of
$G_1$ with the exchange property, and 
\begin{equation}\label{apr29-24-1}
\alpha_e(G_1)\leq|C\setminus V_2|=|C|-|V_2|=\alpha_e(G)-|V_2|.
\end{equation}
\quad Hence, from Eqs.~\eqref{apr29-24-2}-\eqref{apr29-24-1}, we
get $\alpha_e(G)=\alpha_e(G_1)+|V_2|$. Furthermore, if $E(G_2)\neq\emptyset$, by
Eq.~\eqref{apr29-24-2}, one has $\alpha_e(G)\leq \alpha_e(G_2)+|V_1|$
and 
$$\alpha_e(G)=\min\{\alpha_e(G_1)+|V_2|,\, \alpha_e(G_2)+|V_1|\}.$$
\quad Hence, by Lemma~\ref{v-number-dual}, we obtain 
$$
{\rm v}(I_c(G))=\min\{{\rm
v}(I_c(G_1))+|V_2|,\, {\rm
v}(I_c(G_2))+|V_1|\}.
$$
\quad (b) If $E(G_2)=\emptyset$, by the proof of part (a) it follows
that ${\rm v}(I_c(G))={\rm
v}(I_c(G_1))+|V_2|$.
\end{proof}

The next result is similar to \cite[Theorem~3.10]{Saha}.

\begin{corollary}\label{diff} 
Let $G_1$ be the vertex disjoint union of $k+1$
edges, let $G_2$ be a discrete
graph with one vertex, and let $G=G_1*G_2$ be their join. Then,
$$
{\rm reg}(S/I_c(G))-{\rm v}(I_c(G))=k.
$$
\end{corollary}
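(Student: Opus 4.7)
My plan is to compute ${\rm v}(I_c(G))$ and ${\rm reg}(S/I_c(G))$ separately, using the v-number formula for joins (Proposition~\ref{join-formula}) and the codi-graph characterization of the regularity (Proposition~\ref{codi-graph-char}), and then just subtract.

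First I would evaluate ${\rm v}(I_c(G_1))$. Since $G_1$ consists of $k+1$ pairwise disjoint edges, it is unmixed with $\alpha_0(G_1)=k+1$, and any minimal vertex cover $C$ of $G_1$ (obtained by choosing one endpoint from each edge) has the exchange property: swapping the chosen endpoint with the other endpoint of the same edge produces another vertex cover. Hence by Lemma~\ref{v-number-dual}(c), ${\rm v}(I_c(G_1))=\alpha_0(G_1)-1=k$.

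Next I would apply Proposition~\ref{join-formula}(b). For $k\geq 1$, $G=G_1*G_2$ is not a complete multipartite graph: an edge $\{a_i,b_i\}$ of $G_1$ forces $a_i,b_i$ to lie in different parts, but then $a_i$ is non-adjacent to $a_j$ (with $j\neq i$) and also to $b_j$ while $a_j$ and $b_j$ are adjacent, which is impossible in a multipartite graph. Since $G_2$ is a single-vertex discrete graph, Proposition~\ref{join-formula}(b) gives
\[
{\rm v}(I_c(G))={\rm v}(I_c(G_1))+|V(G_2)|=k+1.
\]

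To obtain the regularity I would use Proposition~\ref{codi-graph-char}. Because $G=G_1*G_2$ is a nontrivial join, its complement $\overline{G}=\overline{G_1}\sqcup\overline{G_2}$ is disconnected, so $G$ is a codi-graph. Hence
\[
{\rm reg}(S/I_c(G))=s-2,
\]
where $s=|V(G)|=2(k+1)+1=2k+3$, giving ${\rm reg}(S/I_c(G))=2k+1$. Subtracting yields ${\rm reg}(S/I_c(G))-{\rm v}(I_c(G))=(2k+1)-(k+1)=k$, as desired.

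There is no real obstacle here; the only subtle point is verifying that Proposition~\ref{join-formula}(b) is applicable, which requires ruling out that $G$ is complete multipartite (handled above for $k\geq 1$; the case $k=0$ gives $G=K_3$, where a direct computation yields ${\rm reg}(S/I_c(K_3))={\rm v}(I_c(K_3))=1$, confirming the formula).
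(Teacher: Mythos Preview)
Your proof is correct and follows essentially the same route as the paper: compute ${\rm v}(I_c(G))$ via Proposition~\ref{join-formula} and ${\rm reg}(S/I_c(G))$ via Proposition~\ref{codi-graph-char}, then subtract. The only cosmetic difference is that the paper deduces ${\rm v}(I_c(G_1))=k$ from the Cohen--Macaulayness of the complete intersection $I(G_1)$ via Corollary~\ref{apr10-24}(c), whereas you verify the exchange property directly through Lemma~\ref{v-number-dual}(c); your explicit check that $G$ is not complete multipartite (needed for Proposition~\ref{join-formula}) is a detail the paper leaves implicit.
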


\begin{proof} The edge ideal $I(G_1)$ is a complete intersection,
hence it is Cohen--Macaulay. Then, by Corollary~\ref{apr10-24} and
Proposition~\ref{join-formula}, we obtain 
$$
{\rm v}(I_c(G_1))=\alpha_0(G_1)-1=k\ \mbox{  and }\ {\rm v}(I_c(G))={\rm
v}(I_c(G_1))+1=k+1.
$$
\quad As $G$ is a codi-graph, 
by Proposition~\ref{codi-graph-char}, one has ${\rm
reg}(S/I_c(G))=|V(G)|-2=2k+1$. Hence, the difference 
of the regularity of $S/I_c(G)$ and the v-number of $I_c(G)$ is equal
to $k$.
\end{proof}

\begin{proposition}\label{compl-$k$-partite}
Let $G$ be a complete $k$-partite graph. The following hold. 
\begin{enumerate}
\item[\rm(a)] If $k=2$, then  each edge of $G$ satisfies the  {\rm (P)} property.
\item[\rm(b)] If $k \geq 3$, then no edge of $G$ satisfies the  {\rm (P)} property.
\item[\rm(c)] $G$ is unmixed if and only if $G$ is a homogeneous 
multipartite graph. 
\end{enumerate}
\end{proposition}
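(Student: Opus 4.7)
The plan is to work directly from the decomposition $G = G_1 \ast \cdots \ast G_k$, where each $G_i$ is discrete on a part $V_i$, so that the edges of $G$ are precisely the pairs $\{t, t'\}$ with $t$ and $t'$ lying in distinct parts. Under this combinatorial description, each of the three items reduces to a direct case analysis.

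For part (a), I fix any edge $e = \{t_1, t_2\}$ with $t_1 \in V_1$ and $t_2 \in V_2$. Every edge $\{t_1, t_1'\}$ distinct from $e$ forces $t_1' \in V_2 \setminus \{t_2\}$, and symmetrically $t_2' \in V_1 \setminus \{t_1\}$; since $t_1' \in V_2$ and $t_2' \in V_1$ lie in different parts, $\{t_1', t_2'\}$ is automatically an edge of $G$, so the {\rm (P)} property holds. For part (b), given $k \geq 3$ and an edge $e = \{t_1, t_2\}$ with $t_1 \in V_a$ and $t_2 \in V_b$, I pick an index $c \in \{1, \ldots, k\} \setminus \{a, b\}$ (which exists because $k \geq 3$) and any $v \in V_c$. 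Setting $t_1' = t_2' = v$, both $\{t_1, v\}$ and $\{t_2, v\}$ are edges distinct from $e$, while $\{t_1', t_2'\} = \{v\}$ is not an edge of the simple graph $G$, so the {\rm (P)} property fails.

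For part (c), I use that any stable set of $G$ must be contained in a single part $V_i$, because vertices in different parts are adjacent. Hence the maximal stable sets of $G$ are exactly $V_1, \ldots, V_k$, and by taking complements the minimal vertex covers of $G$ are $V(G) \setminus V_i$ for $i = 1, \ldots, k$, of sizes $s - |V_i|$ where $s = |V(G)|$. Thus $G$ is unmixed if and only if $|V_1| = \cdots = |V_k|$, that is, $G$ is homogeneous. The only delicate point in the whole argument appears in (b): the definition of the {\rm (P)} property permits the auxiliary vertices $t_1'$ and $t_2'$ to coincide, and the failure of {\rm (P)} is ultimately witnessed by the absence of loops in the simple graph $G$. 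Verifying carefully that this collision is allowed by the quantifiers in the definition, and that it produces a legitimate pair of distinct edges through $t_1$ and $t_2$, is the step I expect to require the most attention.
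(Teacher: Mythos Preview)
Your proof is correct and follows essentially the same approach as the paper: the same decomposition $G=G_1\ast\cdots\ast G_k$, the same direct verification in (a), the same choice of a vertex in a third part for (b) exploiting that $\{v,v\}$ is not an edge of a simple graph, and the same identification of the maximal stable sets with the parts $V_1,\ldots,V_k$ in (c). Your discussion of the quantifiers in the {\rm (P)} property for part (b) is exactly the point the paper uses as well.
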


\begin{proof} 
 We can write $G=G_1*\cdots*G_k$, where
 $G_i$ is a discrete graph for $1 \leq i \leq k$.
 
 (a) Take $e\in E(G)$, then $e=\{t_1,t_2\}$ with 
 $t_1 \in V(G_1)$ and $t_2 \in V(G_2)$ since $k=2$.
 Now, let $\{t_1,t'_1\}, \{t_2,t'_2\}$ be two edges of $G$
 distinct from $e$, then $t'_1 \in V(G_2)$ and $t'_2 \in V(G_1)$.
 Hence, $\{t'_1,t'_2\} \in E(G)$, since $G$ is a
 complete $2$-partite graph. Thus, $e$
 satisfies the  {\rm (P)} property.
 
 (b) Take $e\in E(G)$, then $e=\{t_1,t_2\}$ with 
 $t_1 \in V(G_{i_1})$ and $t_2 \in V(G_{i_2})$ where
 $1 \leq i_1 < i_2 \leq k$, since $G$ is $k$-partite.
Since  $k \geq 3$, there is $t_3 \in V(G_{i_3})$
with $i_3 \notin \{i_1,i_2\}$. Thus, 
$\{t_1,t_3\} \in E(G)$ and  $\{t_2,t_3\} \in E(G)$,
since $G$  is a complete $k$-partite graph. But 
$\{t_3,t_3\} \notin E(G)$, since $G$ is a simple
graph. Hence, $e$
does not satisfy the  {\rm (P)} property.

(c) Note that the maximal stable sets of $G$ are
$V(G_1), \ldots, V(G_k)$. Hence, $G$ is
unmixed if and only if $|V(G_1)| = \cdots = |V(G_k)|$, i.e., if and only
if $G$ is homogeneous.
 \end{proof} 

\begin{corollary}\label{jul14-24-3}  
Let $G$ be a homogeneous  complete $k$-partite 
graph. The following conditions are equivalent.
$$
{\rm(a)}\ \mathcal{G}_{I_c(G)}\mbox{ is a complete graph}. \quad
{\rm(b)}\ G \mbox{ is a complete graph}.\quad
{\rm(c)}\ {\rm v}(I_c(G))=\alpha_0(G)-1.
$$
\end{corollary}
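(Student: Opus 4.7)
The plan is to first identify all the minimal vertex covers of $G$ explicitly, then read off the structure of $\mathcal{G}_{I_c(G)}$ by a one-line counting argument, and finally invoke Theorem~\ref{eclipse-apr8-24}(b) to handle the equivalence involving the v-number. Write $G = G_1 * \cdots * G_k$ with each $G_i$ a discrete graph on $p$ vertices (homogeneity). Since every edge of $G$ joins distinct parts, the stable sets of $G$ are precisely the subsets of some $V(G_i)$, so the maximal stable sets are exactly $V(G_1),\ldots,V(G_k)$. Dually, the minimal vertex covers are the $k$ sets
\[
C_i := V(G)\setminus V(G_i) = \bigcup_{j\neq i} V(G_j),\qquad i=1,\ldots,k,
\]
each of size $\alpha_0(G)=(k-1)p$. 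Note also that $G$ is unmixed by Proposition~\ref{compl-$k$-partite}(c) and $E(G)\neq\emptyset$ because $k\geq 2$.

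Next I would compute $|C_i\cap C_j|=kp-2p=(k-2)p$ for $i\neq j$, so by the definition of $\mathcal{G}_{I_c(G)}$, the pair $\{C_i,C_j\}$ is an edge if and only if
\[
(k-2)p=\alpha_0(G)-1=(k-1)p-1,
\]
i.e.\ if and only if $p=1$. This yields the dichotomy: $\mathcal{G}_{I_c(G)}$ is discrete when $p\geq 2$ and is the complete graph on $\{C_1,\ldots,C_k\}$ when $p=1$.

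With that dichotomy in hand the equivalences are straightforward. For (a)$\Leftrightarrow$(b): $\mathcal{G}_{I_c(G)}$ is complete iff $p=1$ iff every $V(G_i)$ is a single vertex, which (since $G$ is complete $k$-partite) is the same as $G$ being the complete graph $K_k$. For (a)$\Leftrightarrow$(c): apply Theorem~\ref{eclipse-apr8-24}(b), which gives ${\rm v}(I_c(G))=\alpha_0(G)-1$ iff $\mathcal{G}_{I_c(G)}$ is not discrete; by the dichotomy ``not discrete'' is equivalent to ``complete.''

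There is no real obstacle here: the whole argument hinges on the explicit enumeration of the minimal vertex covers of a complete multipartite graph and a single size comparison, with Theorem~\ref{eclipse-apr8-24}(b) doing the work of converting the combinatorial condition on $\mathcal{G}_{I_c(G)}$ into the statement about the v-number.
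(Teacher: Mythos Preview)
Your proof is correct. The key dichotomy you establish---that $\mathcal{G}_{I_c(G)}$ is the complete graph on $k$ vertices when $p=1$ and is discrete when $p\geq 2$---follows cleanly from the explicit enumeration of the minimal vertex covers $C_i=V(G)\setminus V(G_i)$ and the intersection count $|C_i\cap C_j|=(k-2)p$, and all three equivalences then drop out.

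Your route differs from the paper's in one notable respect. For (b)$\Leftrightarrow$(c) the paper invokes Theorem~\ref{v-ci-maximum=s-2} to obtain ${\rm v}(I_c(G))=s-2$ for any complete multipartite graph, and then reduces the equation ${\rm v}(I_c(G))=\alpha_0(G)-1$ to $s-2=s-p-1$, i.e.\ $p=1$; it handles (a)$\Rightarrow$(c) and (b)$\Rightarrow$(a) separately via Theorem~\ref{eclipse-apr8-24}(b) and a direct computation for the complete graph. You bypass Theorem~\ref{v-ci-maximum=s-2} entirely: once the dichotomy is in hand, Theorem~\ref{eclipse-apr8-24}(b) alone converts ``not discrete'' into the v-number condition, and ``not discrete'' coincides with ``complete'' by the dichotomy. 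Your argument is thus more self-contained for this particular corollary and yields the slightly sharper structural statement that $\mathcal{G}_{I_c(G)}$ can never be anything strictly between discrete and complete; the paper's approach, on the other hand, ties the result into the broader characterization of when the v-number attains its maximum value $s-2$.
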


\begin{proof} Let $s=|V(G)|$. We can write $G=G_1*\cdots* G_k$, where
$G_i$ is a discrete graph with $p$ vertices for $i=1,\ldots,k$. Then,
$G$ is unmixed by Proposition~\ref{compl-$k$-partite} and ${\rm
v}(I_c(G))=s-2$ by Theorem~\ref{v-ci-maximum=s-2}.  Then,
$\alpha_0(G)=s-p$ because all maximal stable sets of $G$
 have $p$ elements.

(b)$\Leftrightarrow$(c) The equality ${\rm v}(I_c(G))=\alpha_0(G)-1$
holds if and only if $s-2=s-p-1$, i.e., if and only if $p=1$, that is,
if and only if $G$ is a complete graph.

(a)$\Rightarrow$(c) As $\mathcal{G}_{I_c(G)}$ is connected, by 
Theorem~\ref{eclipse-apr8-24}(b), ${\rm v}(I_c(G))=\alpha_0(G)-1$.    

(b)$\Rightarrow$(a) As $G$ is complete, $G$ is unmixed and $\alpha_0(G)=s-1$.
Letting $V(G)=\{t_1,\ldots,t_s\}$ and $C_i=V(G)\setminus\{t_i\}$ for
$i=1,\ldots,s$, one has $E(G^\vee)=\{C_i\}_{i=1}^s$.
Since $|C_i\cap C_j|=|C_i|-1$ for $i\neq j$, $\{C_i,C_j\}$ is an edge 
of $\mathcal{G}_{I_c(G)}$ for $i\neq j$, and $\mathcal{G}_{I_c(G)}$
is a complete graph.
\end{proof}

\section{The graphs of ideals of covers}
 
Let $e_i, 1\leq i\leq s$, be the $i$-th unit vector of
${\mathbb R}^s$, let $C_1,\ldots,C_r$ be the minimal 
vertex covers of an unmixed graph $G$ with $d=\alpha_0(G)$, 
let $\{{t_{c_i}}\}_{i=1}^r$ be the minimal set of generators of the ideal of
covers $I_c(G)$ of $G$, where $t_{c_i}=\prod_{t_j\in C_i}t_j$, let
$\mathcal{G}=\mathcal{G}_{I_c(G)}$ be the graph of $I_c(G)$, and let $\mathcal{K}_r$ be the
complete graph with vertex set $V(\mathcal{G})$. 

\begin{proposition}\label{apr14-05-coro} Let $G$ be an unmixed graph 
and let $\mathcal{LS}(I_c(G))$ be the linear syzygy matrix of
$I_c(G)$. Then, $\mathcal{G}_{I_c(G)}$ is connected 
if and only if ${\rm rank}(\mathcal{LS}(I_c(G)))=r-1$. 
\end{proposition}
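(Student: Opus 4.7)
The plan is simply to apply Proposition~\ref{apr14-05} (the general statement for monomial ideals generated in a single degree) to the ideal $J = I_c(G)$, after observing that the unmixed hypothesis on $G$ forces $J$ to be generated in a single degree.

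First I would note that since $G$ is unmixed, every minimal vertex cover $C_i$ has the same cardinality $d = \alpha_0(G)$, so the minimal generators $t_{c_1},\ldots,t_{c_r}$ of $J = I_c(G)$ all have degree $d$. This places $J$ in the setting of Proposition~\ref{apr14-05} with $q = r$.

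Next, I would invoke Proposition~\ref{apr14-05} directly to conclude
\[
{\rm rank}(\mathcal{LS}(I_c(G))) \;=\; r - c,
\]
where $c$ is the number of connected components of $\mathcal{G}_{I_c(G)}$. The equivalence claimed in the proposition is then immediate: $\mathcal{G}_{I_c(G)}$ is connected if and only if $c = 1$, which occurs if and only if ${\rm rank}(\mathcal{LS}(I_c(G))) = r - 1$.

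There is no real obstacle here, since Proposition~\ref{apr14-05} does the heavy lifting (reducing the rank computation over $S$ to the rank of the incidence matrix of an orientation of $\mathcal{G}_{I_c(G)}$ via Lemma~\ref{chicapitanga}, and then applying the classical fact that the rank of the incidence matrix of a digraph equals $|V| - c$). The only thing one must verify to legitimately apply that proposition is that $J$ is generated in a single degree, which is precisely the content of the unmixed assumption on $G$.
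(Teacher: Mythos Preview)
Your proposal is correct and matches the paper's own proof essentially verbatim: the paper simply invokes Proposition~\ref{apr14-05} to get ${\rm rank}(\mathcal{LS}(I_c(G)))=r-c$ and concludes. Your added remark that the unmixed hypothesis guarantees $I_c(G)$ is equigenerated (so that Proposition~\ref{apr14-05} applies) is a useful clarification that the paper leaves implicit.
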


\begin{proof} Let $c$ be the number of connected
components of $\mathcal{G}_{I_c(G)}$. Then, by
Proposition~\ref{apr14-05},     
$${\rm rank}(\mathcal{LS}(I_c(G)))=r-c,
$$
and consequently $\mathcal{G}_{I_c(G)}$ is connected if and only if 
${\rm rank}(\mathcal{LS}(I_c(G)))=r-1$. 
\end{proof}

\begin{proposition}\label{jun5-24}
Let $G$ be an unmixed graph and let $\{C_1,C_2,C_3\}$ be a path of
length two of the
graph $\mathcal{G}$ of $I_c(G)$. The following hold.  
\begin{enumerate}
\item[\rm(a)]
There exist $\epsilon_i=\{t_{k_i},t_{\ell_i}\}\in E(G)$, $i=1,2$, such that 
\begin{enumerate}
\item[\rm(1)] $C_2=(C_1\setminus\{t_{k_1}\})\cup\{t_{\ell_1}\}=(C_1\cap
C_2)\cup\{t_{\ell_1}\}$, $t_{k_1}\in C_1$, $t_{\ell_1}\notin C_1$,
\item[\rm(2)] $C_3=(C_2\setminus\{t_{k_2}\})\cup\{t_{\ell_2}\}=(C_2\cap
C_3)\cup\{t_{\ell_2}\}$, $t_{k_2}\in C_2$, $t_{\ell_2}\notin C_2$, 
\item[\rm(3)] $t_{\ell_1}\neq t_{k_2}$, and $C_2\subset C_1\cup C_3$.
\end{enumerate} 
\item[\rm(b)] If the path $\{C_1,C_2,C_3\}$ of $\mathcal{G}$ is induced, then 
$\epsilon_1\cap \epsilon_2=\emptyset$, $|C_1\cap C_2\cap C_3|=\alpha_0(G)-2$,
$|C_1\cap C_3|=\alpha_0(G)-2$, and $t_{\ell_2}\notin C_1$.
\end{enumerate}
\end{proposition}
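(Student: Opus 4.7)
The plan is to leverage Theorem~\ref{eclipse-apr8-24}(a), which characterizes edges of $\mathcal{G}_{I_c(G)}$ as single-vertex exchanges along edges of $G$. Applying it to $\{C_1,C_2\}$ and $\{C_2,C_3\}$ immediately produces edges $\epsilon_i=\{t_{k_i},t_{\ell_i}\}$ of $G$ with $t_{k_i}\in C_i\setminus C_{i+1}$ and $t_{\ell_i}\in C_{i+1}\setminus C_i$. Since $G$ is unmixed, $|C_i\cap C_{i+1}|=\alpha_0(G)-1$, so the alternate descriptions $C_2=(C_1\cap C_2)\cup\{t_{\ell_1}\}$ and $C_3=(C_2\cap C_3)\cup\{t_{\ell_2}\}$ in items (1) and (2) are immediate.

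The first real content in part (a) is the inequality $t_{\ell_1}\neq t_{k_2}$. I would argue by contradiction: if $t_{\ell_1}=t_{k_2}$, then a direct substitution gives $C_3=(C_1\cap C_2)\cup\{t_{\ell_2}\}$; but $C_3$ must cover the edge $\epsilon_1$, and $t_{\ell_1}\notin C_3$, so $t_{k_1}\in C_3$, forcing $t_{k_1}=t_{\ell_2}$ and hence $C_3=C_1$, contradicting the path having length two. Once this is established, the inclusion $C_2\subset C_1\cup C_3$ follows immediately from $C_2=(C_1\cap C_2)\cup\{t_{\ell_1}\}$ combined with $t_{\ell_1}\in C_3$, the latter because $t_{\ell_1}\in C_2$ and $t_{\ell_1}\neq t_{k_2}$.

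For part (b), inducedness of the path means $\{C_1,C_3\}$ is not an edge of $\mathcal{G}_{I_c(G)}$, hence $|C_1\cap C_3|\neq\alpha_0(G)-1$. Since $t_{k_2}\neq t_{\ell_1}$ by (a), we have $t_{k_2}\in C_1\cap C_2$, and thus $C_3=((C_1\cap C_2)\setminus\{t_{k_2}\})\cup\{t_{\ell_1},t_{\ell_2}\}$. Intersecting with $C_1=(C_1\cap C_2)\cup\{t_{k_1}\}$ and using $t_{\ell_1}\notin C_1$ yields $C_1\cap C_3=(C_1\cap C_2)\setminus\{t_{k_2}\}$ unless $t_{k_1}=t_{\ell_2}$; but the exceptional case would make $|C_1\cap C_3|=\alpha_0(G)-1$, contradicting inducedness. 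Hence $t_{k_1}\neq t_{\ell_2}$ and $|C_1\cap C_3|=\alpha_0(G)-2$. The same description of $C_3$ gives $C_1\cap C_2\cap C_3=(C_1\cap C_2)\setminus\{t_{k_2}\}$, so this triple intersection has size $\alpha_0(G)-2$. Finally, $t_{\ell_2}\notin C_1$ because any occurrence of $t_{\ell_2}$ in $C_1$ would lie in $C_1\setminus C_2=\{t_{k_1}\}$ (as $t_{\ell_2}\notin C_2$), contradicting $t_{k_1}\neq t_{\ell_2}$; and $\epsilon_1\cap\epsilon_2=\emptyset$ is obtained by collecting the elementary distinctness relations, namely $t_{k_1}\neq t_{k_2}$ (since $t_{k_1}\notin C_2$ but $t_{k_2}\in C_2$), $t_{\ell_1}\neq t_{\ell_2}$ (since $t_{\ell_1}\in C_2$ but $t_{\ell_2}\notin C_2$), together with $t_{\ell_1}\neq t_{k_2}$ from (a) and $t_{k_1}\neq t_{\ell_2}$ just proved.

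The main obstacle is the contradiction argument for $t_{\ell_1}\neq t_{k_2}$ in part (a); this is the only step where the ``length two'' hypothesis (equivalently $C_1\neq C_3$) enters non-trivially, and everything else reduces to careful bookkeeping on symmetric differences.
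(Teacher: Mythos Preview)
Your proof is correct and follows essentially the same approach as the paper's: both invoke Theorem~\ref{eclipse-apr8-24}(a) to obtain the exchange edges, establish $t_{\ell_1}\neq t_{k_2}$ by contradiction (the paper does an element-by-element check that $C_1\supset C_3$, while you more directly use that $C_3$ covers $\epsilon_1$), and then verify the four distinctness relations and intersection sizes by bookkeeping. Your computation of $|C_1\cap C_2\cap C_3|$ via the explicit description $C_3=((C_1\cap C_2)\setminus\{t_{k_2}\})\cup\{t_{\ell_1},t_{\ell_2}\}$ is slightly more direct than the paper's inclusion-exclusion argument, but the overall structure is the same.
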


\begin{proof} (a) The existence of $\epsilon_1$ and $\epsilon_2$ satisfying (1) and
(2) follows from Theorem~\ref{eclipse-apr8-24} and its proof. Now we
prove part (3). To prove that
$t_{\ell_1}\neq t_{k_2}$, we argue by contradiction assuming
$t_{\ell_1}=t_{k_2}$. We claim that $C_1=C_3$. Since $|C_1|=|C_3|$, it
suffices to show the inclusion $C_1\supset C_3$.  Take $t_i\in C_3$. 
Note that $t_{\ell_2}\in C_1$
because $C_1$ is a vertex cover, 
$\{t_{k_2},t_{\ell_2}\}=\{t_{\ell_1},t_{\ell_2}\}\in E(G)$ and
$t_{\ell_1}\notin C_1$. Thus, if $t_i=t_{\ell_2}$, one has $t_i\in
C_1$. If $t_i\neq t_{\ell_2}$, by (2), we get
$t_i\in C_2\setminus\{t_{k_2}\}$ and $t_i\neq t_{k_2}$. Then, by (1), $t_i\in
C_1\setminus\{t_{k_1}\}$ or $t_i=t_{\ell_1}$. In the first case
$t_i\in C_1$. In the second case $t_i=t_{\ell_1}=t_{k_2}$, a
contradiction since $t_i\neq t_{k_2}$. Thus, $t_i\in C_1$. This proves
the equality $C_1=C_3$, a contradiction because $C_1,C_2, C_3$ are
distinct. To show the inclusion $C_2\subset C_1\cup C_3$ take $t_i\in
C_2$. By (1), either $t_i\in C_1\setminus\{t_{k_1}\}\subset C_1$ or
$t_i=t_{\ell_1}$. Thus, we may assume $t_i=t_{\ell_1}$. Recalling 
that $t_{\ell_1}\neq t_{k_2}$, we get $t_i=t_{\ell_1}\in
C_2\setminus\{t_{k_2}\}\subset C_3$.

(b) To show that $\epsilon_1\cap \epsilon_2=\emptyset$ it suffices to show that the
following conditions hold. 
$$
{\rm(i)}\ t_{\ell_1}\neq t_{\ell_2}, \quad {\rm(ii)}\ t_{k_1}\neq
t_{k_2}, \quad
{\rm(iii)}\ t_{\ell_1}\neq t_{k_2}, \quad {\rm(iv)}\ t_{k_1}\neq t_{\ell_2}.  
$$
\quad (i) Assume that $t_{\ell_1}=t_{\ell_2}$.  Then, by (1), 
$t_{\ell_2}=t_{\ell_1}\in C_2$, a contradiction because by the last
part of (2) 
we have $t_{\ell_2}\notin C_2$.

(ii) Assume that $t_{k_1}=t_{k_2}$. Then, by (1)-(2),
$t_{k_1}=t_{k_2}\in C_1\cap C_2=C_1\setminus\{t_{k_1}\}$, a
contradiction. 

(iii) This follows at once from part (a) item (3).

(iv) Assume that $t_{k_1}= t_{\ell_2}$. Note that $t_{k_2}\in C_1\cap
C_2$. Indeed, by (2), $t_{k_2}\in C_2$ and, by (3),  
$t_{k_2}\neq t_{\ell_1}$. Then, by (1),
$t_{k_2}\in C_1\setminus\{t_{k_1}\}\subset C_1$. From (2), we
obtain
\begin{align*}
C_1\cap C_3&=C_1\cap((C_2\setminus\{t_{k_2}\})\cup\{t_{\ell_2}\})
=(C_1\cap(C_2\setminus\{t_{k_2}\}))\cup (C_1\cap \{t_{\ell_2}\})\\
&=((C_1\cap C_2)\setminus\{t_{k_2}\})\cup (C_1\cap
\{t_{k_1}\}),\ \mbox{ therefore}\\
|C_1\cap C_3|&= |(C_1\cap C_2)\setminus\{t_{k_2}\}|+|C_1\cap
\{t_{k_1}\}|-|((C_1\cap C_2)\setminus\{t_{k_2}\})\cap (C_1\cap
\{t_{\ell_2}\})|\\
&=((\alpha_0(G)-1)-1)+1-0=\alpha_0(G)-1=|C_1|-1=|C_3|-1.
\end{align*}
\quad Hence, $\{C_1,C_3\}$ is an edge of the graph $\mathcal{G}$ of
$I_c(G)$, a contradiction
because $\{C_1,C_2,C_3\}$ is an induced path of $\mathcal{G}$. 
Letting $d= \alpha_0(G)$, we now show that
$|\bigcap_{i=1}^3C_i|=d-2$. By (3), $C_2\subset C_1\cup C_3$. Then,
$C_1\cup C_3=C_1\cup C_2\cup C_3$ and, 
by the inclusion-exclusion principle (Lemma~\ref{inc-exc}), one has 
\begin{align*}
|C_1\cup C_3|&=2d-|C_1\cap C_3|=|C_1\cup C_2\cup C_3|=
3d-2(d-1)-|C_1\cap C_3|+|C_1\cap C_2\cap C_3|,
\end{align*}   
and consequently, cancelling out $-|C_1\cap C_3|$, we obtain 
$|C_1\cap C_2\cap C_3|=d-2$. Finally, we
show that $|C_1\cap C_3|$ is equal to $\alpha_0(G)-2$ and
$t_{\ell_2}\notin C_1$. By (2) we obtain
\begin{align*}
C_1\cap C_3&=C_1\cap((C_2\cap C_3)\cup\{t_{\ell_2}\})
=(C_1\cap C_2\cap C_3)\cup (C_1\cap \{t_{\ell_2}\}),\ \mbox{ therefore}\\
|C_1\cap C_3|&= (d-2) +|C_1\cap
\{t_{\ell_2}\}|-|C_1\cap C_2\cap C_3\cap\{t_{\ell_2}\}|=
(d-2) +|C_1\cap \{t_{\ell_2}\}|.
\end{align*}
\quad Then, $t_{\ell_2}\notin C_1$ because $\{C_1,C_3\}$ is not an
edge of $\mathcal{G}$. Thus, $|C_1\cap C_3|=d-2$.
\end{proof}

\begin{proposition}\label{jul14-24}
Let $G$ be an unmixed graph. A path $\{C_1,\ldots,C_n\}$ of the graph 
$\mathcal{G}$ of $I_c(G)$ is a path of $\mathcal{G}^{(C_1,C_n)}$ if
and only if 
$$
\alpha_0(G)=|C_k\cap C_1|+|C_k\cap C_n|-|C_1\cap C_k\cap C_n|,\,\ \forall\,
1<k<n.
$$
\end{proposition}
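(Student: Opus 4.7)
The plan is to recognize that this is essentially an application of the inclusion--exclusion principle to the set $C_k \cap (C_1 \cup C_n)$, combined with the unmixedness hypothesis.

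First I would unfold the definitions. By the description of $\mathcal{G}^{(C_i,C_j)}$ given in Section~\ref{prelim-section} (and recalled just before Proposition~\ref{Bigdeli-Herzog-j}), the path $\{C_1,\ldots,C_n\}$ of $\mathcal{G}$ is a path of $\mathcal{G}^{(C_1,C_n)}$ precisely when every vertex $C_k$ of the path satisfies $C_k \subset C_1 \cup C_n$. Since $C_1$ and $C_n$ are trivially contained in $C_1 \cup C_n$, this reduces to the condition $C_k \subset C_1 \cup C_n$ for all $1<k<n$.

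Next, for each fixed $k$ with $1<k<n$, I would apply Lemma~\ref{inc-exc} to the subsets $C_k\cap C_1$ and $C_k\cap C_n$ of $C_k$ to obtain
\begin{equation*}
|C_k \cap (C_1 \cup C_n)| \;=\; |C_k\cap C_1| + |C_k\cap C_n| - |C_1\cap C_k\cap C_n|.
\end{equation*}
Because $G$ is unmixed, $|C_k|=\alpha_0(G)$, and because $C_k\cap(C_1\cup C_n)\subset C_k$, the containment $C_k\subset C_1\cup C_n$ is equivalent to the equality $|C_k \cap (C_1\cup C_n)| = \alpha_0(G)$. Substituting the inclusion--exclusion expression gives exactly the stated equation
\begin{equation*}
\alpha_0(G)=|C_k\cap C_1|+|C_k\cap C_n|-|C_1\cap C_k\cap C_n|.
\end{equation*}

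There is no real obstacle here; the only thing one must be careful about is recognizing that the equivalence ``$C_k\subset C_1\cup C_n$ iff $|C_k\cap(C_1\cup C_n)|=|C_k|$'' requires that both sets have the same finite cardinality, which is precisely where the unmixed hypothesis enters through $|C_k|=\alpha_0(G)$. Running this equivalence through each index $k$ with $1<k<n$ yields both directions of the biconditional simultaneously.
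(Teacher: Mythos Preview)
Your proposal is correct and follows essentially the same argument as the paper: both reduce the condition to $C_k\subset C_1\cup C_n$, write $C_k\cap(C_1\cup C_n)=(C_k\cap C_1)\cup(C_k\cap C_n)$, apply inclusion--exclusion, and use unmixedness to convert the cardinality equality into the set equality. The only cosmetic difference is that the paper presents the two implications separately rather than packaging them as a single biconditional.
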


\begin{proof} $\Rightarrow$) If $\{C_1,\ldots,C_n\}$ is a path of 
$\mathcal{G}^{(C_1,C_n)}$, then $C_k\subset C_1\cup C_n$ for all
$1<k<n$. Hence,
$$
C_k=(C_k\cap C_1)\cup(C_k\cap C_n),\ \mbox{therefore}\ \alpha_0(G)=|C_k\cap
C_1|+|C_k\cap C_n|-|C_1\cap C_k\cap C_n|.
$$
$\Leftarrow$) By assumption $C_k$ and $(C_k\cap C_1)\cup(C_k\cap C_n)$
have the same cardinality for all $1<k<n$. Since the latter is
contained in $C_k$, one has $C_k=(C_k\cap C_1)\cup(C_k\cap C_n)$. Thus, 
$C_k\subset C_1\cup C_n$. 
\end{proof}

For use below we state the following elementary fact. 

\begin{lemma}\label{existencia-cubiertas}
If $t$ is not an isolated vertex of a graph $G$, then there are
minimal vertex covers $C$ and $C'$ of $G$  
such that $t \in C'$ and $t \notin C$.  
\end{lemma}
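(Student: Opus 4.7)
The plan is to use the standard duality between minimal vertex covers and maximal stable sets: a subset $C\subset V(G)$ is a minimal vertex cover of $G$ if and only if $V(G)\setminus C$ is a maximal stable set of $G$. Since $t$ is not isolated, there exists an edge $\{t,t'\}\in E(G)$, and I will construct the two minimal vertex covers by building appropriate maximal stable sets.

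To produce $C$ with $t\notin C$, I would observe that $\{t\}$ is a stable set (as $G$ is simple, so there is no loop at $t$). Extend $\{t\}$ to a maximal stable set $B$ of $G$, which is possible by a routine Zorn/finite exhaustion argument. Then $C:=V(G)\setminus B$ is a minimal vertex cover of $G$ and $t\notin C$ because $t\in B$.

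To produce $C'$ with $t\in C'$, I would use the vertex $t'$ adjacent to $t$. The singleton $\{t'\}$ is stable, so extend it to a maximal stable set $A$ of $G$. Then $A$ contains $t'$ and is stable, so from $\{t,t'\}\in E(G)$ it follows that $t\notin A$. Setting $C':=V(G)\setminus A$ gives a minimal vertex cover containing $t$.

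There is no real obstacle here; the only subtle point is recording that stable sets can always be extended to maximal ones (finite set) and that the complement of a maximal stable set is a minimal vertex cover (dual to the statement already recalled in Section~\ref{prelim-section}). The argument is constructive in that the two required minimal vertex covers arise as complements of maximal stable sets containing $t$ and $t'$, respectively.
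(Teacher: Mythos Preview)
Your proposal is correct and follows essentially the same argument as the paper: pick an edge $\{t,t'\}$, extend each of $\{t\}$ and $\{t'\}$ to a maximal stable set, and take complements to obtain the two required minimal vertex covers.
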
 

\begin{proposition}\label{connected-exchange}
Let $G$ be an unmixed graph without isolated vertices. If
the graph $\mathcal{G}$ of $I_c(G)$ is 
connected, then $V(G)$ is the union of the exchange edges of $G$. 
\end{proposition}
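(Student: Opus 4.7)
The plan is to show that every vertex of $G$ lies in some exchange edge, and then conclude that $V(G)$ is the union of the exchange edges. Fix an arbitrary $t \in V(G)$. Since $t$ is not an isolated vertex, Lemma~\ref{existencia-cubiertas} supplies minimal vertex covers $C$ and $C'$ of $G$ with $t \notin C$ and $t \in C'$. In particular $C \neq C'$, so both appear as distinct vertices of $\mathcal{G}=\mathcal{G}_{I_c(G)}$.

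Since $\mathcal{G}$ is connected, there is a path $C = C_{i_0}, C_{i_1}, \ldots, C_{i_n} = C'$ in $\mathcal{G}$ joining $C$ to $C'$. Consider the indicator of membership of $t$ along this path: it equals $0$ at the starting vertex $C$ and $1$ at the ending vertex $C'$, so there must exist a consecutive pair of vertices $C_{i_j}, C_{i_{j+1}}$ with $t \notin C_{i_j}$ and $t \in C_{i_{j+1}}$.

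By Theorem~\ref{eclipse-apr8-24}(a), the edge $\{C_{i_j}, C_{i_{j+1}}\}$ of $\mathcal{G}$ comes from an edge $\{t_k, t_\ell\} \in E(G)$ such that $t_k \in C_{i_j}$, $t_\ell \notin C_{i_j}$, and $C_{i_{j+1}} = (C_{i_j} \setminus \{t_k\}) \cup \{t_\ell\}$. Since $t \in C_{i_{j+1}} \setminus C_{i_j}$, the only possibility is $t = t_\ell$. Hence $\{t_k, t\} = \{t_k, t_\ell\}$ is an exchange edge of $G$ containing $t$. As $t$ was arbitrary, every vertex of $G$ is an endpoint of some exchange edge, so $V(G)$ is the union of the exchange edges, as desired.

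There is no real obstacle here: the proof is an almost direct combination of Lemma~\ref{existencia-cubiertas} (to create a ``change of membership'' of $t$ between two minimal vertex covers) with the combinatorial description of the edges of $\mathcal{G}$ given in Theorem~\ref{eclipse-apr8-24}(a). The only subtlety worth making explicit is that the exchange that occurs along the single edge of the path where $t$ enters must have $t_\ell = t$, which is forced by the characterization $C_{i_{j+1}} = (C_{i_j} \setminus \{t_k\}) \cup \{t_\ell\}$.
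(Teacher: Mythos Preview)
Your proof is correct and follows essentially the same approach as the paper: both use Lemma~\ref{existencia-cubiertas} to obtain minimal vertex covers $C$, $C'$ with $t\notin C$, $t\in C'$, take a path between them in $\mathcal{G}$, locate the first edge along which $t$ enters, and invoke Theorem~\ref{eclipse-apr8-24}(a) to conclude that this edge corresponds to an exchange edge of $G$ containing $t$. The only cosmetic difference is that the paper phrases the ``first entry'' step via $j=\min\{i\mid t\in C_i\}$ rather than your indicator-function language.
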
 

\begin{proof} 
Let $t \in V(G)$. By Lemma~\ref{existencia-cubiertas}, there are 
minimal vertex covers $C$ and $C'$ of $G$ such that $t\in C'\setminus C$. As
 $\mathcal{G}$ is connected,  there is a path 
$\mathcal{P}=\{C_1=C,C_2, \ldots, C_n=C'\}$ in $\mathcal{G}$. Let
$$
j=\mbox{min}\{i  \mbox{ } \vert \mbox{  }  C_i \in V(\mathcal{P})  \mbox{ and }  t \in C_i \},
$$
then $2 \leq j \leq n$ since $t \notin C_1$ and $t \in C_n$. So, 
$t \in C_j  \setminus C_{j-1}$. Furthermore, $\{C_{j-1}, C_j\}$ is an
edge of $\mathcal{G}$ and, by Theorem~\ref{eclipse-apr8-24},
$C_j=(C_{j-1} \setminus \{t'\}) \cup \{t\}$ for some $\{t,t'\}\in E(G)$.
Hence, $\{t,t'\}$ is an exchange edge of $G$. Therefore, $V(G)$
is the union of the  exchange edges.
\end{proof} 

\begin{proposition}\label{g-paths}
Let $G$ be an unmixed graph, let $\mathcal{G}$ be the graph of
$I_c(G)$, and let $\{C_1,\ldots,C_n\}$ be a path of $\mathcal{G}$. The
following conditions are equivalent.
\begin{enumerate}
\item[\rm(a)] $C_i\subset C_p\cup C_q$ for all integers $1\leq p\leq i\leq
q\leq n$.
\item[\rm(b)] $|\bigcap_{j=p}^qC_j|=\alpha_0(G)-(q-p)$ for all
integers $1\leq p\leq
q\leq n$.
\end{enumerate}
\end{proposition}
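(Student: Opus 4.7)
The plan is to prove both implications by elementary cardinality bookkeeping, writing $d := \alpha_0(G)$ and using as basic inputs that consecutive covers $C_j, C_{j+1}$ along the path satisfy $|C_j \cap C_{j+1}| = d-1$ (definition of an edge of $\mathcal{G}$) and that $|C_k| = d$ for every vertex of $\mathcal{G}$. For $(b) \Rightarrow (a)$, I fix any $1 \leq p \leq i \leq q \leq n$ and split the intersection through $C_i$: set $A := \bigcap_{j=p}^{i} C_j$ and $B := \bigcap_{j=i}^{q} C_j$. Hypothesis $(b)$ gives $|A| = d-(i-p)$, $|B| = d-(q-i)$, and $|A \cap B| = |\bigcap_{j=p}^{q} C_j| = d-(q-p)$, so inclusion-exclusion yields $|A \cup B| = d$. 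Since $A \cup B \subset C_i$ and $|C_i| = d$, the equality $C_i = A \cup B$ must hold, and combined with $A \subset C_p$ and $B \subset C_q$ this gives $C_i \subset C_p \cup C_q$.

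For $(a) \Rightarrow (b)$, I plan to induct on $m := q-p$. The cases $m = 0$ and $m = 1$ are immediate. For $m \geq 2$, define $A := \bigcap_{j=p}^{q-1} C_j$, $B := \bigcap_{j=p+1}^{q} C_j$, and $E := \bigcap_{j=p+1}^{q-1} C_j$; by the inductive hypothesis these have cardinalities $d-(m-1)$, $d-(m-1)$, and $d-(m-2)$, respectively. The central step is the identity $E = A \cup B$: by $(a)$, each interior cover $C_j$ with $p+1 \leq j \leq q-1$ lies in $C_p \cup C_q$, hence so does $E$, giving $E = (E \cap C_p) \cup (E \cap C_q) = A \cup B$. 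Inclusion-exclusion then delivers
$$
\left|\bigcap_{j=p}^{q} C_j\right| = |A \cap B| = |A| + |B| - |A \cup B| = 2(d-m+1) - (d-m+2) = d-m.
$$

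The main obstacle is the identity $E = A \cup B$ in the inductive step of $(a) \Rightarrow (b)$. The lower bound $|\bigcap_{j=p}^q C_j| \geq d - (q-p)$ already follows without $(a)$ from the \emph{one in, one out} exchange structure along consecutive edges of the path, but pinning down the matching upper bound seems to require controlling the interior intersection $E$, which is precisely what $(a)$ provides by confining every interior vertex of the path to $C_p \cup C_q$.
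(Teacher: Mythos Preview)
Your proof is correct and rests on the same engine as the paper's---inclusion--exclusion applied to intersections of the $C_j$'s along the path---but the bookkeeping is organized differently in each direction. For $(a)\Rightarrow(b)$ the paper inducts on $q$ and peels off the last vertex via the identity $C_{q-1}=(C_{q-1}\cap C_q)\cup\bigcap_{j=p}^{q-1}C_j$, whereas you induct on $m=q-p$ and use the symmetric identity $\bigcap_{j=p+1}^{q-1}C_j=\bigl(\bigcap_{j=p}^{q-1}C_j\bigr)\cup\bigl(\bigcap_{j=p+1}^{q}C_j\bigr)$; both decompositions are justified by $(a)$ and the arithmetic is equivalent. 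For $(b)\Rightarrow(a)$ your argument is a genuine streamlining: the paper runs an induction on $i$ (using the hypothesis $C_{i-1}\subset C_p\cup C_q$ at each step), while your splitting $A=\bigcap_{j=p}^{i}C_j$, $B=\bigcap_{j=i}^{q}C_j$ together with $|A\cup B|=d$ gives $C_i=A\cup B\subset C_p\cup C_q$ in one shot, no induction needed.
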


\begin{proof} (a)$\Rightarrow$(b) 
We proceed by induction on $q\geq 1$. If $q=1$, then $q=p=1$ and 
$|\bigcap_{j=p}^qC_j|=|C_1|=\alpha_0(G)=\alpha_0(G)-(q-p)$. Assume that
$q\geq 2$ and the equality
in (b) valid for $q-1$. Note that the equality in
(b) is clear if $q=p$ or $q=p+1$. The case $q=p+1$ follows by recalling 
that $\{C_p,C_{p+1}\}$ is an edge of $\mathcal{G}$, that is, 
$|C_p\cap C_{p+1}|=|C_p|-1=\alpha_0(G)-1$. Thus, we may assume that $q>p+1$.  
Consider the list
$$
C_p,\, C_{p+1},\ldots,C_{q-2},\,C_{q-1},\, C_q.
$$
\quad By hypothesis $C_{q-1}\subset C_p\cup C_q,\, C_{q-1}\subset
C_{p+1}\cup C_q,\ldots,C_{q-1}\subset
C_{q-2}\cup C_q$. Hence,
$$
C_{q-1}=(C_{q-1}\cap C_q)\cup(C_p\cap C_{p+1}\cap\cdots\cap
C_{q-2}\cap C_{q-1}).
$$
\quad Then, setting $d=\alpha_0(G)$, by induction and applying
inclusion-exclusion, 
one has
$$
d=(d-1)+\Bigg|\bigcap_{j=p}^{q-1}C_j
\Big|-\Bigg|\bigcap_{j=p}^{q}C_j\Bigg|=(d-1)+(d-(q-1-p))
-\Bigg|\bigcap_{j=p}^{q}C_j\Bigg|.
$$
\quad Thus, $|\bigcap_{j=p}^{q}C_j|=d-(q-p)$ 
and the proof is
complete.

(b)$\Rightarrow$(a) We proceed by induction on $i$. 
The inclusion $C_i\subset C_p\cup C_q$ is clear
if $i=p$ or $i=q$. Assume that $p<i<q$ and $C_{i-1}
\subset C_p\cup C_q$. From the inclusion 
$$
C_{i}\supset(C_{i-1}\cap C_i)\cup(C_i\cap C_{i+1}\cap\cdots\cap
C_{q}),
$$
and the equalities $\big|\bigcap_{j=i}^{q}C_j
\big|=d-(q-i)$, $\big|\bigcap_{j=i-1}^{q}C_j\big|=d-(q-(i-1))$, and 
$$
|C_i|-|(C_{i-1}\cap C_i)\cup(C_i\cap C_{i+1}\cap\cdots\cap
C_{q})|=d-\Bigg((d-1)+\Bigg|\bigcap_{j=i}^{q}C_j
\Big|-\Bigg|\bigcap_{j=i-1}^{q}C_j\Bigg|\Bigg)=0,
$$
one obtains $C_{i}=(C_{i-1}\cap C_i)\cup(C_i\cap C_{i+1}\cap\cdots\cap
C_{q})\subset (C_p\cup C_q)\cup C_q=C_p\cup C_q$.  
\end{proof}

\begin{corollary}\label{linearp-trees}
Let $G$ be an unmixed graph. If $\mathcal{G}$ is a tree, then $I_c(G)$ has a linear
presentation if and only if for every path $\{C_1,\ldots,C_n\}$
of $\mathcal{G}$ one has $\alpha_0(G)\geq n-1$ and  
$$
\Big|\bigcap_{j=1}^nC_j\Big|=\alpha_0(G)-(n-1).
$$
\end{corollary}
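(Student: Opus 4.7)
The plan is to derive the corollary as a direct consequence of Proposition~\ref{Bigdeli-Herzog-j} (the path-based characterization of linear presentation) and Proposition~\ref{g-paths} (which translates the containment $C_i\subset C_p\cup C_q$ into an equality of intersection sizes). The decisive feature to exploit is the defining property of a tree: between any two vertices there is a \emph{unique} path. Throughout, set $d=\alpha_0(G)$.

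For the forward direction, suppose $I_c(G)$ is linearly presented and let $\{C_1,\ldots,C_n\}$ be a path of $\mathcal{G}$. For every pair $1\le p\le q\le n$, Proposition~\ref{Bigdeli-Herzog-j} produces a path in $\mathcal{G}^{(C_p,C_q)}$ joining $C_p$ to $C_q$; since this path lives in $\mathcal{G}$ and $\mathcal{G}$ is a tree, it must coincide with the unique path $\{C_p,C_{p+1},\ldots,C_q\}$. Hence $C_i\subset C_p\cup C_q$ for all $p\le i\le q$, which is condition (a) of Proposition~\ref{g-paths}. That proposition then yields condition (b), and specializing to $p=1$, $q=n$ gives $|\bigcap_{j=1}^n C_j|=d-(n-1)$. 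The nonnegativity of the left-hand side forces $d\ge n-1$.

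For the backward direction, assume the stated equality holds for every path of $\mathcal{G}$ and fix two distinct minimal vertex covers $C_i,C_j$. Let $\{C_i=D_1,D_2,\ldots,D_m=C_j\}$ be the unique $C_i$--$C_j$ path in the tree $\mathcal{G}$. Every sub-path $\{D_p,\ldots,D_q\}$ is itself a path in $\mathcal{G}$, so the hypothesis gives $|\bigcap_{k=p}^{q} D_k|=d-(q-p)$ for all $1\le p\le q\le m$. This is condition (b) of Proposition~\ref{g-paths} for $\{D_1,\ldots,D_m\}$; hence condition (a) holds, and in particular $D_k\subset D_1\cup D_m = C_i\cup C_j$ for each $k$. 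Thus $\{D_1,\ldots,D_m\}$ is a path in $\mathcal{G}^{(C_i,C_j)}$, and Proposition~\ref{Bigdeli-Herzog-j} yields that $I_c(G)$ is linearly presented.

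There is essentially no technical obstacle; the only subtle point is recognizing that uniqueness of paths in a tree is exactly what collapses the existential criterion of Proposition~\ref{Bigdeli-Herzog-j} into a purely numerical criterion on path intersections. The apparent strengthening from ``one path'' to ``all sub-paths'' needed in order to apply Proposition~\ref{g-paths} is automatic: every sub-path of a path of $\mathcal{G}$ is again a path of $\mathcal{G}$, so the hypothesis of the corollary covers it. The condition $\alpha_0(G)\ge n-1$ is then merely the cardinality consequence of the intersection identity.
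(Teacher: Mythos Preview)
Your proof is correct and follows essentially the same route as the paper: in both directions you combine Proposition~\ref{Bigdeli-Herzog-j} with Proposition~\ref{g-paths}, using uniqueness of paths in a tree to identify the path produced by the linear-presentation criterion with the given sub-path $\{C_p,\ldots,C_q\}$. Your explicit remark that $\alpha_0(G)\ge n-1$ follows from the nonnegativity of $|\bigcap_j C_j|$ is a small clarification the paper leaves implicit.
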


\begin{proof} $\Rightarrow$) Let $\{C_1,\dots,C_n\}$ be a path of
$\mathcal{G}$. By Proposition~\ref{Bigdeli-Herzog-j}, there is a path
$\mathcal{P}$ of $\mathcal{G}$ joining $C_1$ and $C_n$ whose vertices
are in $V(\mathcal{G}^{(C_1,C_n)})$. As $\mathcal{G}$ is a tree, there
is a unique path between $C_1$ and $C_n$. Thus,
$\mathcal{P}=\{C_1,\dots,C_n\}$, and consequently 
$C_i\subset C_1\cup C_n$ for any $1\leq i\leq n$. Given integers
$1\leq p\leq i\leq q\leq n$, as $\mathcal{G}$ is a tree, 
the unique path between $C_p$ and $C_q$ 
is $\{C_p,\ldots,C_q\}$. Again, by Proposition~\ref{Bigdeli-Herzog-j},
one has $C_i\subset C_p\cup C_q$ for any $p\leq i\leq q$. Hence, by
Proposition~\ref{g-paths}, 
$|\bigcap_{j=1}^nC_j|=\alpha_0(G)-(n-1)$ and the proof is
complete.

$\Leftarrow$) Let $C_1$ and $C_n$ be two vertices of
$\mathcal{G}$. As $\mathcal{G}$ is connected, there is a path 
$\{C_1,\dots,C_n\}$ of $\mathcal{G}$ joining $C_1$ and $C_n$. Let 
$p$ and $q$ be any integers such that $1\leq p\leq q\leq n$. Then, 
$\{C_p,\ldots,C_q\}$ is a path of $\mathcal{G}$ of length $q-p$ 
and, 
by hypothesis, $|\bigcap_{j=p}^qC_j|=\alpha_0(G)-(q-p)$. Then, by 
Proposition~\ref{g-paths}, $C_i\subset C_p\cup C_q$ for $1\leq p\leq
i\leq q\leq n$. Then, in particular, $\mathcal{P}$ is a path in 
$\mathcal{G}^{(C_1,C_n)}$ and, by Proposition~\ref{Bigdeli-Herzog-j}, $I_c(G)$ has
a linear presentation.
\end{proof}

\begin{proposition}\label{jul14-24-1}
Let $G$ be an unmixed graph and let $\mathcal{G}$ be the graph of
the ideal of covers $I_c(G)$ of $G$. If $\mathcal{G}=\{C_1,\ldots,C_n\}$ is a path of
$\mathcal{G}$ of length
$n-1$, then $I_c(G)$ has a linear
presentation if and only if there exists a set of exchange edges
$\{\{t_{k_i},t_{\ell_i}\}\}_{i=1}^{n-1}$ of $G$ such that 
\begin{enumerate}
\item[\rm(a)]
$C_i=(C_{i-1}\setminus\{t_{k_{i-1}}\})\cup\{t_{\ell_{i-1}}\}$, 
$t_{k_{i-1}}\in C_{i-1}$, 
$t_{\ell_{i-1}}\notin C_{i-1}$ for $i=2,\ldots,n$, 
\item[\rm(b)] $|\bigcap_{j=p}^qC_j|=\alpha_0(G)-(q-p)$ 
for all $1\leq p\leq q\leq n$,
\item[\rm(c)] $C_1=\{t_{k_i}\}_{i=1}^{n-1}$, 
$C_n=\{t_{\ell_i}\}_{i=1}^{n-1}$, $V(G)=C_1\cup C_n$, and 
$|C_1\cap C_n|=2\alpha_0(G)-|V(G)|$.
\end{enumerate}
\end{proposition}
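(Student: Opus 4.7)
The plan is to establish both implications using results already proved for $\mathcal{G}$, with the main combinatorial content concentrated in the forward direction.

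The backward direction is essentially immediate from Corollary~\ref{linearp-trees}. Since $\mathcal{G}$ is itself a path, every path of $\mathcal{G}$ is a subpath $\{C_p,\ldots,C_q\}$ with $1\le p\le q\le n$, and condition (b) gives $|\bigcap_{j=p}^{q}C_j|=\alpha_0(G)-(q-p)\ge 0$, in particular $\alpha_0(G)\ge q-p$. Thus the hypotheses of Corollary~\ref{linearp-trees} are verified and $I_c(G)$ is linearly presented.

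For the forward direction, assume $I_c(G)$ is linearly presented. Part (a) follows from Theorem~\ref{eclipse-apr8-24}(a) applied to each edge $\{C_{i-1},C_i\}$ of $\mathcal{G}$, producing the exchange edges $\{t_{k_{i-1}},t_{\ell_{i-1}}\}$, and part (b) is immediate from Corollary~\ref{linearp-trees} since $\mathcal{G}$ is a tree. The heart of the argument, and what I expect to be the main obstacle, is part (c). The key combinatorial observation I plan to use is the following: for any $1\le p<q\le n$, a vertex $v$ belongs to $\bigcap_{m=p}^{q}C_m$ if and only if $v\in C_p$ and $v\ne t_{k_m}$ for all $p\le m\le q-1$. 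Indeed, each exchange $C_{m+1}=(C_m\setminus\{t_{k_m}\})\cup\{t_{\ell_m}\}$ removes precisely $t_{k_m}$, so $v$ survives throughout the interval iff it is never removed. Setting $K_{p,q}:=\{t_{k_p},\ldots,t_{k_{q-1}}\}$ and invoking (b) gives
\[
|C_p\cap K_{p,q}|\,=\,|C_p|-\Bigl|\bigcap_{m=p}^{q}C_m\Bigr|\,=\,\alpha_0(G)-\bigl(\alpha_0(G)-(q-p)\bigr)\,=\,q-p.
\]
Since $|K_{p,q}|\le q-p$, this forces $|K_{p,q}|=q-p$ (the $t_{k_m}$'s are pairwise distinct) and $K_{p,q}\subseteq C_p$. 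The symmetric argument applied to the reversed path then shows $\{t_{\ell_p},\ldots,t_{\ell_{q-1}}\}\subseteq C_q$ and consists of $q-p$ pairwise distinct elements.

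To conclude (c), I will first establish $\alpha_0(G)=n-1$. Since $\{C_1,\ldots,C_n\}$ is the full set of minimal vertex covers of $G$, and every vertex of $G$ lies outside some minimal vertex cover (by Lemma~\ref{existencia-cubiertas} for non-isolated vertices, and trivially for isolated ones), $\bigcap_{j=1}^{n}C_j=\emptyset$; combined with (b) this forces $\alpha_0(G)=n-1$. Applying the identity above with $p=1$, $q=n$ then yields $C_1=\{t_{k_i}\}_{i=1}^{n-1}$ and $C_n=\{t_{\ell_i}\}_{i=1}^{n-1}$. Finally, under the standing assumption that $G$ has no isolated vertices, $\mathcal{G}$ is connected and Proposition~\ref{connected-exchange} gives $V(G)=\bigcup_{i=1}^{n-1}\{t_{k_i},t_{\ell_i}\}=C_1\cup C_n$, from which $|C_1\cap C_n|=2\alpha_0(G)-|V(G)|$ is inclusion-exclusion.
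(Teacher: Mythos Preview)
Your proof is correct and follows essentially the same route as the paper: Theorem~\ref{eclipse-apr8-24} for (a), Corollary~\ref{linearp-trees} for (b) and for the backward direction, and for (c) the observation that a vertex of $C_1$ not among the $t_{k_i}$'s would persist through every $C_j$, contradicting $\bigcap_j C_j=\emptyset$ (the paper obtains this last fact from $I(G)=\bigcap_i(C_i)$ containing no variables, which is equivalent to your appeal to Lemma~\ref{existencia-cubiertas}), followed by Proposition~\ref{connected-exchange} for $V(G)=C_1\cup C_n$. Your intermediate identity $\bigcap_{m=p}^{q}C_m=C_p\setminus K_{p,q}$ and the resulting distinctness of the $t_{k_m}$'s is a slightly more systematic packaging of the same idea, not a different approach; and your explicit flagging of the ``no isolated vertices'' hypothesis needed to invoke Proposition~\ref{connected-exchange} is a point the paper leaves implicit.
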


\begin{proof} $\Rightarrow$) By Theorem~\ref{eclipse-apr8-24}, 
there exists a set of exchange edges of $G$ that satisfies (a). As the
path $\mathcal{G}$ is a tree, by Corollary~\ref{linearp-trees}, one
obtains condition (b).  To prove (c) note that
$\bigcap_{i=1}^nC_i=\emptyset$ because the edge ideal of $G$ is given
by $I(G)=\bigcap_{i=1}^n(C_i)$ and $I(G)$ contains no variables. Then, 
making $p=1$ and $q=n$ in (b), we get that $\alpha_0(G)=n-1$. Hence to
show the equalities $C_1=\{t_{k_i}\}_{i=1}^{n-1}$ and  
$C_n=\{t_{\ell_i}\}_{i=1}^{n-1}$ it suffices to show the inclusions 
$C_1\subset \{t_{k_i}\}_{i=1}^{n-1}$ and
$C_n\subset\{t_{\ell_i}\}_{i=1}^{n-1}$. To show the first inclusion
take $t\in C_1$. If $t\notin \{t_{k_i}\}_{i=1}^{n-1}$, by part
(a), one has 
$$ 
t\in C_1\setminus\{t_{k_1}\}\subset C_2,\ldots,
t\in C_{n-1}\setminus\{t_{k_{n-1}}\}\subset C_n,
$$
and consequently $t\in\bigcap_{i=1}^nC_i$, a contradiction. Thus, 
$t\in\{t_{k_i}\}_{i=1}^{n-1}$. To show the second inclusion
take $t'\in C_n$. If $t'\notin \{t_{\ell_i}\}_{i=1}^{n-1}$, by part
(a), one has 
$$ 
t'\in C_{n-1}\setminus\{t_{k_{n-1}}\}\subset C_{n-1},\ldots,
t'\in C_{2}\setminus\{t_{k_{2}}\}\subset C_2,\,
t'\in C_{1}\setminus\{t_{k_{1}}\}\subset C_1,
$$
and consequently $t'\in\bigcap_{i=1}^nC_i$, a contradiction. Thus, 
$t'\in\{t_{\ell_i}\}_{i=1}^{n-1}$. As $\mathcal{G}$ is a path, the set of exchange
edges of $G$ is $\{\{t_{k_i},t_{\ell_i}\}\}_{i=1}^{n-1}$. Hence, 
by Proposition~\ref{connected-exchange}, the exchange edges cover 
$V(G)$, that is, $V(G)=C_1\cup C_n$. Therefore,
$|V(G)|=2\alpha_0(G)-|C_1\cap C_n|$.

$\Leftarrow$) As $\mathcal{G}$ is a path and condition (b) holds, 
by Corollary~\ref{linearp-trees}, the ideal of covers 
$I_c(G)$ of $G$ has a linear presentation.
\end{proof}

\begin{proposition}\label{I_c-bipartite} Let $H_r$ be the graph with vertex set
$V(H_r)=\{t_1,\ldots,t_r,t_{r+1},\ldots, t_{2r}\}$ and edge set 
$E(H_r)=\{\{t_i,t_{r+i}\}\}_{i=1}^r$. The following hold.
\begin{enumerate}
\item[\rm(a)] The graph $\mathcal{H}_r$ of $I_c(H_r)$ is a bipartite 
$r$-regular graph with $2^r$ vertices and $r2^{r-1}$ edges. 
\item[\rm(b)] If $G$ is an unmixed bipartite graph without isolated
vertices and $r=\alpha_0(G)$, then $G$ has a perfect matching
$\{\{t_i,t_{r+i}\}\}_{i=1}^r$, and the graph $\mathcal{G}$ 
of $I_c(G)$ is a subgraph of $\mathcal{H}_r$. In particular
$\mathcal{G}$ is bipartite.
\end{enumerate}
\end{proposition}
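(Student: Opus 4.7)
The plan for part (a) is to identify $\mathcal{H}_r$ explicitly as the $r$-dimensional hypercube graph. Since $H_r$ is the vertex-disjoint union of the $r$ edges $\{t_i,t_{r+i}\}$, its minimal vertex covers are exactly the $2^r$ sets obtained by choosing, for each $i$, one of $t_i$ or $t_{r+i}$; this immediately gives $|V(\mathcal{H}_r)| = 2^r$. By Theorem~\ref{eclipse-apr8-24}(a), two such covers are adjacent in $\mathcal{H}_r$ if and only if they differ by swapping the endpoints of exactly one matching edge. Hence each vertex has degree $r$, yielding $|E(\mathcal{H}_r)| = r2^{r-1}$, and bipartiteness follows by partitioning $V(\mathcal{H}_r)$ according to the parity of $|C \cap \{t_1,\dots,t_r\}|$, which flips under any admissible swap.

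For part (b), I would first produce the perfect matching. Let $(A,B)$ be the bipartition of $G$. Since $G$ has no isolated vertices, every $v \in A$ has a neighbor in $B$, so $A \setminus \{v\}$ fails to cover some edge incident to $v$; thus $A$ is a minimal vertex cover of $G$, and by the same reasoning so is $B$. The unmixed hypothesis then forces $|A|=|B|=r$, and hence $|V(G)|=2r$. Applying K\"onig's theorem (Theorem~\ref{konig-theorem}) yields a matching of size $r$ in $G$, which therefore saturates all $2r$ vertices. After relabeling, this perfect matching becomes $\{\{t_i,t_{r+i}\}\}_{i=1}^r$.

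It remains to show $\mathcal{G}$ is a subgraph of $\mathcal{H}_r$. For the vertex inclusion, every minimal vertex cover $C$ of $G$ has exactly $r$ elements and must cover each of the $r$ disjoint matching edges, so a pigeonhole argument gives that $C$ picks exactly one endpoint from each matching edge; hence $C \in V(\mathcal{H}_r)$. For the edge inclusion, let $\{C,C'\} \in E(\mathcal{G})$, so by Theorem~\ref{eclipse-apr8-24}(a) there exists $\{t_k,t_\ell\} \in E(G)$ with $C' = (C \setminus \{t_k\}) \cup \{t_\ell\}$. The one genuine obstacle is showing that $\{t_k,t_\ell\}$ is itself a matching edge: if $e=\{t_k,t_k'\}$ denotes the matching edge containing $t_k$, then the single-endpoint property forces $t_k' \notin C$, hence $t_k' \notin C'$, so $C'$ cannot cover $e$ unless $t_\ell = t_k'$. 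Consequently $\{t_k,t_\ell\} = e$ is a matching edge, so $\{C,C'\} \in E(\mathcal{H}_r)$, and the bipartiteness of $\mathcal{G}$ follows from that of $\mathcal{H}_r$ established in part (a).
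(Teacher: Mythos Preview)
Your proof is correct, but it differs from the paper's in several places. For part (a), the paper proceeds by induction on $r$, explicitly building the bipartition of $\mathcal{H}_{r+1}$ from that of $\mathcal{H}_r$ and partitioning $E(\mathcal{H}_{r+1})$ into three pieces to count edges; your direct identification of $\mathcal{H}_r$ with the $r$-cube via the parity function $C\mapsto |C\cap\{t_1,\dots,t_r\}|\bmod 2$ is more economical and yields all the claimed invariants at once. For part (b), the paper obtains the perfect matching by citing an external structural result on unmixed bipartite graphs, whereas you derive it internally by observing that both parts of the bipartition are minimal vertex covers, forcing $|V(G)|=2r$, and then invoking K\"onig's theorem; this makes your argument more self-contained. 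On the other hand, for the edge inclusion $E(\mathcal{G})\subset E(\mathcal{H}_r)$ the paper's route is shorter: once $V(\mathcal{G})\subset V(\mathcal{H}_r)$ is known, an edge $\{C,C'\}$ of $\mathcal{G}$ already satisfies $|C\cap C'|=r-1=\alpha_0(H_r)-1$, which is exactly the edge condition in $\mathcal{H}_r$, so no analysis of the exchange edge is needed. Your argument that the exchange edge must itself be a matching edge is correct and gives extra information (it essentially recovers, for bipartite $G$, the fact from Theorem~\ref{CM-connectd1}(a) that exchange edges lie in the perfect matching), but it is not required for the subgraph conclusion.
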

 
\begin{proof} (a) We give a recursive method to 
construct $\mathcal{H}_r$. 
We proceed by induction on $r\geq 2$. If $r=2$, then the minimal
vertex covers of $H_2$ are 
$$
C_1=\{t_1,t_2\},\ C_2=\{t_3,t_4\},\ C_3=\{t_1, t_4\},\ C_4=\{t_2,t_3\}.
$$
\quad Since the graph $\mathcal{H}_2$ is the $4$-cycle
$\{C_1,C_3,C_2,C_4,C_1\}$, the graph $\mathcal{H}_2$ is bipartite with
bipartition $\{C_1,C_2\}$, $\{C_3,C_4\}$. Assume $r>2$ and that the result 
holds for $\mathcal{H}_r$. Let $p=2^{r-1}$
and let $\{C_1,\ldots,C_p\}$, $\{C_{p+1},\ldots,C_{2p}\}$ be a bipartition
of the bipartite graph $\mathcal{H}_r$. Consider the graph $H_{r+1}$ 
obtained from $H_r$ by adding the edge $\{t_{2r+1},t_{2r+2}\}$. 
Then, $\alpha_0(H_{r+1})=r+1$. It is not hard to see that 
the graph $\mathcal{H}_{r+1}$ is bipartite with bipartition 
\begin{align*}
&\{C_1\cup\{t_{2r+2}\},\ldots,\quad C_p\cup\{t_{2r+2}\},
\ \ C_{p+1}\cup\{t_{2r+1}\},\ldots,C_{2p}\cup\{t_{2r+1}\}\},\\
&\{C_{p+1}\cup\{t_{2r+2}\},\ldots,C_{2p}\cup\{t_{2r+2}\},
\ C_{1}\cup\{t_{2r+1}\},\ldots,\quad C_{p}\cup\{t_{2r+1}\}\}.
\end{align*}
\quad To determine the edges of $\mathcal{H}_{r+1}$ consider the
following sets:
\begin{align*}
E_1&=\{\{C_i\cup\{t_{2r+2}\}, C_j\cup\{t_{2r+2}\}\}\mid
\{C_i,C_j\}\in E(\mathcal{H}_r)\},\\
E_2&=\{\{C_i\cup\{t_{2r+1}\}, C_j\cup\{t_{2r+1}\}\}\mid
\{C_i,C_j\}\in E(\mathcal{H}_r)\},\\
E_3&=\{\{C_i\cup\{t_{2r+2}\}, C_i\cup\{t_{2r+1}\}\}\mid
i=1,\ldots,2p\}.
\end{align*}
\quad Note that $E(\mathcal{H}_{r+1})=E_1\cup E_2\cup E_3$ and,  
by induction, it follows that $\mathcal{H}_{r+1}$ is a bipartite 
$(r+1)$-regular graph and 
$E(\mathcal{H}_{r+1})=r2^{r-1}+r2^{r-1}+2^r=(r+1)2^r$.

(b) Let $C$  be any vertex of $\mathcal{G}$. 
Then, as $G$ is unmixed, $C$ is a vertex cover of $G$ with
$r=\alpha_0(G)$ elements. By \cite[Theorem~1.1]{unmixed}, $G$ has a
perfect matching $\{\{t_i,t_{r+i}\}\}_{i=1}^r$. Then, $C$ is a vertex
cover of $H_r$, that is, $C$ is a vertex of $\mathcal{H}_r$. Thus, 
$V(\mathcal{G})\subset V(\mathcal{H}_r)$. Let $\{C,D\}$ be any edge of
$\mathcal{G}$. Then, $|C\cap D|=\alpha_0(G)-1$ and, since
$\alpha_0(G)=\alpha_0(H_r)=r$, $\{C,D\}$ is an edge of
$\mathcal{H}_r$. This proves that $\mathcal{G}$ is a subgraph 
of $\mathcal{H}_r$. Hence, by part (a), $\mathcal{G}$ is bipartite. 
\end{proof}

\begin{proposition} \label{min-gen-S}
Let $G$ be an unmixed graph. Then, the set $F$ of column vectors of
$\mathcal{L}\mathcal{S}(I_c(G))$ is a minimal system of generators
for $S(F)$, the $S$-module generated by $F$, if and only if
the graph $\mathcal{G}= \mathcal{G}_{I_c(G)}$  has no strong  
$3$-cycles.
\end{proposition}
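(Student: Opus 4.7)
The plan is as follows. Since each column $v_e$ of $\mathcal{LS}(I_c(G))$ is homogeneous of degree $d+1$ in $S^r$ (graded with $\deg(e_i) = d := \alpha_0(G)$), graded Nakayama implies that $F$ is a minimal system of generators for $S(F)$ if and only if the columns are $K$-linearly independent. To refine this, impose the natural $\mathbb{Z}^s$-multigrading on $S^r$ with $\deg(e_i) = t_{c_i}$: each $v_e = t_\ell e_i - t_k e_j$ with $e = \{C_i, C_j\}$ is multihomogeneous of multidegree $t_{C_i \cup C_j}$. Since $G$ is unmixed, all these multidegrees are squarefree monomials of total degree $d+1$; any $K$-linear dependence therefore decomposes according to multidegree class, and the question reduces to: for each $U \subset V(G)$ with $|U| = d+1$, are the columns $v_e$ with $C_i \cup C_j = U$ $K$-linearly independent?

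For a fixed such $U$, let $\Sigma(U) = \{t \in U \mid U \setminus \{t\} \in E(G^\vee)\}$ and write $C_t := U \setminus \{t\}$ for $t \in \Sigma(U)$. Then the columns of multidegree $t_U$ are exactly
\[
v_{\{t,t'\}} = t \cdot e_{C_t} - t' \cdot e_{C_{t'}}, \qquad \{t, t'\} \subset \Sigma(U),
\]
since $t \cdot t_{C_t} = t_U = t' \cdot t_{C_{t'}}$. If $|\Sigma(U)| \leq 2$ there is at most one such column, so independence is automatic. If $|\Sigma(U)| \geq 3$, for any three distinct $t, t', t'' \in \Sigma(U)$ the triangular identity $v_{\{t,t'\}} = v_{\{t,t''\}} - v_{\{t',t''\}}$ (verified by matching the $e_{C_t}, e_{C_{t'}}, e_{C_{t''}}$ coefficients) exhibits a nontrivial $K$-dependence.

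It remains to identify the condition ``$|\Sigma(U)| \geq 3$ for some $U$'' with ``$\mathcal{G}$ has a strong $3$-cycle.'' Forward: if $\{t, t', t''\} \subset \Sigma(U)$, then $\{C_t, C_{t'}, C_{t''}\}$ is a $3$-cycle of $\mathcal{G}$ since any two covers intersect in $d-1$ vertices, and this cycle is strong because $C_{t'} \setminus C_t = \{t\} = C_{t''} \setminus C_t$. Backward: given a strong $3$-cycle $\{C_1, C_2, C_3\}$ with $C_2 \setminus C_1 = C_3 \setminus C_1 = \{\ell\}$, $C_1 \setminus C_2 = \{k_1\}$, and $C_1 \setminus C_3 = \{k_2\}$, put $A = C_1 \cap C_2 \cap C_3$ and $U = A \cup \{k_1, k_2, \ell\}$; a direct check using $|C_i| = d$ gives $|U| = d+1$, $C_1 \cup C_2 = C_1 \cup C_3 = C_2 \cup C_3 = U$, and $\{k_1, k_2, \ell\} \subset \Sigma(U)$. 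Combined with the previous paragraph, this proves the proposition. The main obstacle is the multigraded reduction in the first paragraph, which rigorously collapses $S$-linear dependencies to $K$-linear ones inside each fixed multidegree class; the triangular identity and the bijection with strong $3$-cycles are then short set-theoretic checks.
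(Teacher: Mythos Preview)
Your proof is correct. Both your argument and the paper's establish the $\Rightarrow$ direction by exhibiting the same triangular identity $f^{2,3}=f^{1,3}-f^{1,2}$ attached to a strong $3$-cycle, so there is no real difference there. The $\Leftarrow$ direction is organized differently: the paper works directly with an $S$-linear relation $f^{1,2}=\sum g_{i,j}f^{i,j}$, pairs it with $e_1'$ to obtain $t_q=\sum g_{1,j}t_{k_{1,j}}$, and then uses a variable substitution ($t_q\mapsto 1$, the others $\mapsto 0$) to force some $t_{k_{1,3}}=t_q$, thereby producing $C_3$ with $C_2\setminus C_1=C_3\setminus C_1$. You instead first invoke the equi-degree graded Nakayama to reduce minimality to $K$-linear independence, and then use the $\mathbb{Z}^s$-multigrading on $S^r$ to block-diagonalize: the columns of multidegree $t_U$ are exactly the $\binom{|\Sigma(U)|}{2}$ vectors $t\,e_{C_t}-t'\,e_{C_{t'}}$, so a dependence exists iff $|\Sigma(U)|\ge 3$, which you then identify with the existence of a strong $3$-cycle. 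Your route is more structural and explains \emph{why} redundancies come only from triangles (they are precisely the cycle relations of the complete graph on $\Sigma(U)$), and it generalizes cleanly to other equigenerated monomial ideals; the paper's substitution argument is more elementary and avoids invoking Nakayama or the multigrading explicitly.
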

\begin{proof} 
Let $\{C_1, \ldots, C_r\}$  be 
the set of minimal vertex cover of $G$. 
If $C_j=(C_i\setminus \{t_{k'}\})\cup\{t_k\} $, we set 
$f^{i,j}=t_k e'_i- t_{k'} e'_j$, where $e'_1, \ldots, e'_r$ are 
the unit vectors of $S^r$. Thus, 
$$
F=\{f^{i,j} \mid \{C_i, C_j\} \in E(\mathcal{G}) \mbox{ and } i<j\}
$$
is the set of column vectors of the linear syzygy matrix 
$\mathcal{L}\mathcal{S}(I_c(G))$ of $I_c(G)$.

$\Rightarrow$) We argue by contradiction assuming there is $C^3=\{C_1, C_2,
C_3, C_1\}$ a strong $3$-cycle of $\mathcal{G}$.
Since $\{C_1, C_2\},  \{C_1, C_3\} \in E(\mathcal{G})$, we can write 
$C_2=(C_1 \setminus \{t_j\})\cup \{ t_i \}$ and  
$C_3=(C_1 \setminus \{t_{j'}\})\cup \{ t_{i'} \}$, where 
$t_j\in C_1$, $t_{i}\notin C_1$, $t_{j'}\in C_1$, $t_{i'}\notin C_1$. As $C^3$ is strong, 
then  $\{t_i\}=C_2\setminus C_1=C_3\setminus C_1=\{t_{i'}\}$, 
$t_i=t_{i'}$ and $t_j\neq t_{j'}$. Note that $t_{j'}\in C_2=(C_1
\setminus \{t_j\})\cup \{ t_i \}$ because  
$t_j\neq t_{j'}$, and $t_j\notin C_2$ because $t_i\neq t_j$.  
We claim that 
$C_3=(C_2 \setminus \{t_{j'}\}) \cup \{ t_j\}$. 
As both sides 
have the same cardinality, it suffices to show
the inclusion ``$\supset$''. Take $t\in(C_2 \setminus \{t_{j'}\})
\cup \{ t_j\}$. If $t=t_{j}$, then $t_{j}\in C_3$ because $t_j\neq
t_{j'}$ and $t_j\in C_1$. If $t=t_i$, then $t\in C_3$ because
$C_3\setminus C_1=\{t_i\}$. If $t\in C_2\setminus\{t_{j'}\}$, $t\neq
t_i$, then $t\in C_1\setminus\{t_{j'}\}\subset C_3$ because $C_2=(C_1 \setminus
\{t_j\})\cup \{ t_i \}$. This proves the claim. Hence, 
$f^{1,2}=t_i e'_1- t_j e'_2$, $f^{1,3}=t_i e'_1- t_{j'}e'_3$ and 
$f^{2,3}=t_j e'_2- t_{j'}e'_3$. Thus,
$f^{2,3}=f^{1,3}-f^{1,2} \in S(f^{1,2}, f^{1,3})$.  
Therefore, $F$ is not a minimal system of generators for $S(F)$, a contradiction.

$\Leftarrow$) Arguing by contradiction we may assume 
$f^{1,2}=t_q e'_1-t_{q'} e'_2 \in S(F')$, where 
$F'=F \setminus \{f^{1,2}\}$. Thus, 
$C_2=(C_1 \setminus \{t_{q'}\}) \cup \{t_q\}$ and 
$f^{1,2}=\sum_{f^{i,j} \in F'} g_{i,j}f^{i,j}$, where
$g_{i,j} \in S$ for each $f^{i,j} \in F'$. 
Now, taking the inner product of $f^{1,2}$ with $e'_1$, we obtain 
\begin{equation}\label{jul31-24}
t_q=\langle f^{1,2}, e'_1 \rangle =\sum_{f^{1,j} \in F'} g_{1,j} \langle f^{1,j}, e'_1 \rangle=
\sum_{f^{1,j} \in F'} g_{1,j} t_{k_{1,j}},
\end{equation}
where $f^{1,j}=t_{k_{1,j}}e'_1-t_{k'_{1,j}}e'_j$. If $t_{k_{1,j}}
\neq t_q$  for each 
$f^{1,j}  \in F'$, then making $t_{k_{1,j}}=0$ for all $j \neq 1$ and
$t_q=1$ in Eq.~\eqref{jul31-24}, we obtain $1=0$, a contradiction. Then, there is 
$f^{1,j} \in F'$ such that $t_{k_{1,j}}=t_q$.  Without loss of 
generality, we may assume $j=3$, then $f^{1,3}=t_q e'_1-t_{k'_{1,3}}e'_{3}$. 
Consequently, 
$C_3=(C_1 \setminus \{t_{k'_{1,3}}\}) \cup \{ t_q \}$.
We can proceed as in the first part of the proof to show the equality
$C_3=(C_2 \setminus \{t_{k'_{1,3}}\}) \cup \{t_{q}\}$. Then,   
$C_2\setminus C_1=\{t_q\}=C_3\setminus C_1$, and 
$\{C_1, C_2, C_3, C_1\}$ is a strong $3$-cycle of $\mathcal{G}$, a
contradiction. 
Therefore, the set of columns of
$\mathcal{L}\mathcal{S}(I_c(G))$ is a minimal system of generators for
$S(F)$.
\end{proof}

\section{Linearly presented ideals of covers}\label{section-lpi}

Let $G$ be an unmixed graph. In this section we show that the ideal of
covers $I_c(G)$ of $G$ is linearly presented if $G$ has no induced $4$-cycles and
classify combinatorially when $I_c(G)$ is linearly presented for
graphs without $3$- and $5$-cycles.

\begin{proposition} \label{propertyP}
An edge $e$ has the  {\rm (P)} property  if and only if 
$\vert e \cap C  \vert =1$ for each minimal vertex cover $C$ of $G$.
\end{proposition}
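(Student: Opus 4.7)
The plan is to exploit the standard bijection between minimal vertex covers of $G$ and maximal stable sets of $G$: $C$ is a minimal vertex cover if and only if $V(G)\setminus C$ is a maximal stable set. Since the condition $|e\cap C|\geq 1$ is automatic for any vertex cover, what the proposition really characterizes is when $e\not\subset C$ for every minimal vertex cover.

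For the direction $(\Rightarrow)$, suppose $e=\{t_1,t_2\}$ satisfies (P), and assume for contradiction that there is a minimal vertex cover $C$ with $\{t_1,t_2\}\subset C$. By minimality of $C$, for each $i\in\{1,2\}$ the set $C\setminus\{t_i\}$ fails to be a vertex cover, so there exists an edge $\{t_i,t_i'\}\in E(G)$ with $t_i'\notin C$. The edge $\{t_i,t_i'\}$ is distinct from $e$ because $t_i'\notin C$ while $t_1,t_2\in C$. Now invoke (P): if $t_1'\neq t_2'$, then $\{t_1',t_2'\}\in E(G)$ but neither endpoint lies in $C$, contradicting that $C$ is a vertex cover; if $t_1'=t_2'$, then (P) would demand the loop $\{t_1',t_1'\}\in E(G)$, impossible in a simple graph.

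For the direction $(\Leftarrow)$, suppose $|e\cap C|=1$ for every minimal vertex cover $C$, and fix edges $\{t_1,t_1'\},\{t_2,t_2'\}\in E(G)$ both distinct from $e$. The simplicity of $G$ and the distinctness from $e$ give $t_1\neq t_1',\ t_2\neq t_2',\ t_1'\neq t_2$, and $t_2'\neq t_1$. The goal is to show $t_1'\neq t_2'$ and $\{t_1',t_2'\}\in E(G)$. Assume for contradiction that either $t_1'=t_2'=:t'$ (so $t'$ is a common neighbor of $t_1$ and $t_2$ avoiding $e$) or $t_1'\neq t_2'$ but $\{t_1',t_2'\}\notin E(G)$. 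In the first case $\{t'\}$ is stable; in the second case $\{t_1',t_2'\}$ is stable. Extend the resulting stable set to a maximal stable set $A$ of $G$, and set $C:=V(G)\setminus A$, which is a minimal vertex cover by the bijection recalled above. Because $t_1$ is adjacent to $t_1'\in A$, the stability of $A$ forces $t_1\in C$; analogously $t_2\in C$. Hence $|e\cap C|=2$, contradicting the hypothesis.

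The key technical ingredient in both directions is the interplay between maximal stable sets and minimal vertex covers; the only mildly delicate point is that one must treat the degenerate case $t_1'=t_2'$ (a common neighbor of $t_1,t_2$) separately in each direction, since the failure of (P) there manifests as the nonexistence of a loop rather than as an actual non-edge. I expect no substantial obstacle beyond carefully distinguishing these two subcases.
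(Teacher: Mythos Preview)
Your proof is correct and follows essentially the same approach as the paper's: both directions argue by contradiction, using in $(\Rightarrow)$ the minimality of $C$ to produce neighbors $t_1',t_2'\notin C$ and then invoking (P), and in $(\Leftarrow)$ extending the stable set $\{t_1',t_2'\}$ to a maximal stable set whose complement is a minimal vertex cover containing $e$. Your treatment is in fact slightly more careful than the paper's, since you explicitly handle the degenerate case $t_1'=t_2'$ (a common neighbor of $t_1,t_2$ outside $e$) in both directions, whereas the paper leaves this case implicit.
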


\begin{proof} $\Rightarrow$)  
By contradiction suppose there is a minimal vertex cover $C$ of $G$
such that $ e \subset C$. We assume  $e=\{t_1,t_2\}$,  then there are
$t_1',t_2' \in V(G) \setminus C$ such that $\{t_1,t_1'\},
\{t_2,t_2'\} \in E(G)$. Indeed, as $C$ is minimal, there are
$\{x,y\}\in E(G)$ and $\{z,w\}\in E(G)$ such that 
$$
(C\setminus\{t_1\})\cap\{x,y\}=\emptyset\ \mbox{ and }\ 
(C\setminus\{t_2\})\cap\{z,w\}=\emptyset.
$$
We may assume $x\in C$ and $z\in C$. Then, $x=t_1$, $z=t_2$, 
$y\notin C$, and $w\notin C$. We let $t_1'=y$ and $t_2'=w$. As $e$ has the  {\rm (P)} property, then 
$\{t_1',t_2'\} \in E(G)$. Hence, $\{t_1',t_2'\} \cap C \neq \emptyset$,
since $C$ is a vertex cover, a contradiction.

$\Leftarrow$) We let $e=\{t_1,t_2\}$. To show that $e$ has the (P)
property let $\{t_1,t'_1\}$ and $\{t_2,t'_2\}$ be two edges of $G$
distinct from $e$. We argue by contradiction assuming 
$\{t'_1,t'_2\}$ is not an edge of $G$. Then, there exists a minimal vertex cover
$C$ of $G$ such that $C\cap\{t_1',t_2'\}=\emptyset$. Then, $t_i\in C$
for $i=1,2$ and $|C\cap e|=2$, a contradiction. 
\end{proof} 

\begin{proposition} \label{propertyP2}
If $C^4$ is a $4$-cycle of  a graph $G$, then the following conditions are equivalent:
\begin{enumerate}
\item[\rm(a)] Each edge of $C^4$ has the {\rm (P)} property.
\item[\rm(b)] $C^4$ has two disjoint edges with the {\rm (P)} property.
\item[\rm(c)] $\vert V(C^4) \cap C  \vert= 2$ for every minimal vertex cover $C$ of $G$.
\end{enumerate}
\end{proposition}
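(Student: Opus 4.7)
The plan is to establish the cycle of implications (a)$\Rightarrow$(b)$\Rightarrow$(c)$\Rightarrow$(a), using Proposition~\ref{propertyP} as the bridge between the {\rm (P)} property and the intersection numbers $|e\cap C|$. Label the 4-cycle as $C^4=\{t_1,t_2,t_3,t_4,t_1\}$ so that its edges are $e_1=\{t_1,t_2\},\,e_2=\{t_2,t_3\},\,e_3=\{t_3,t_4\},\,e_4=\{t_4,t_1\}$, and note that $e_1,e_3$ (as well as $e_2,e_4$) are disjoint.

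The implication (a)$\Rightarrow$(b) is immediate: the pair $\{e_1,e_3\}$ consists of two disjoint edges of $C^4$, and by hypothesis both have the {\rm (P)} property. For (b)$\Rightarrow$(c), assume without loss of generality that $e_1$ and $e_3$ both have the {\rm (P)} property. Then Proposition~\ref{propertyP} gives $|e_1\cap C|=1$ and $|e_3\cap C|=1$ for every minimal vertex cover $C$ of $G$. Since $e_1$ and $e_3$ are disjoint and $V(C^4)=e_1\cup e_3$, we conclude
\[
|V(C^4)\cap C|=|e_1\cap C|+|e_3\cap C|=2.
\]

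The main work is in (c)$\Rightarrow$(a), and here the key combinatorial observation is that a set of two vertices that hits every edge of the $4$-cycle $C^4$ must be a pair of opposite (non-adjacent) vertices, namely $\{t_1,t_3\}$ or $\{t_2,t_4\}$. Given a minimal vertex cover $C$ of $G$, the set $C\cap V(C^4)$ covers every edge of $C^4$ (viewed as a subgraph of $G$), and by hypothesis has cardinality $2$; hence $C\cap V(C^4)\in\{\{t_1,t_3\},\{t_2,t_4\}\}$. In either case, each of the four cycle edges $e_1,e_2,e_3,e_4$ contains exactly one vertex of $C$, i.e.\ $|e_i\cap C|=1$ for $i=1,2,3,4$. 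As this holds for every minimal vertex cover $C$ of $G$, Proposition~\ref{propertyP} yields that every edge of $C^4$ has the {\rm (P)} property, which is (a).

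The only real obstacle is the (c)$\Rightarrow$(a) step, and even that reduces to the elementary fact that the minimum vertex covers of a 4-cycle are its two pairs of diagonally opposite vertices; no further technical machinery is needed beyond Proposition~\ref{propertyP}.
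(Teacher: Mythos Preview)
Your proof is correct and follows essentially the same cycle (a)$\Rightarrow$(b)$\Rightarrow$(c)$\Rightarrow$(a) as the paper, with (a)$\Rightarrow$(b) and (b)$\Rightarrow$(c) matching the paper's arguments almost verbatim. The only minor difference is in (c)$\Rightarrow$(a): you invoke Proposition~\ref{propertyP} after observing that $C\cap V(C^4)$ must be a diagonal pair, whereas the paper argues by contradiction directly from the definition of the {\rm (P)} property (constructing a minimal vertex cover containing at least three vertices of $C^4$); both arguments are equally short and valid.
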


\begin{proof}
(a)$\Rightarrow$(b) This implication is clear.

(b)$\Rightarrow$(c) There are $\epsilon_1, \epsilon_2 \in E(C^4)$ such that they have the 
 {\rm (P)} property and $\epsilon_1 \cap \epsilon_2=\emptyset$. Then, $\epsilon_1 \cup \epsilon_2=V(C^4)$.
We take a minimal vertex cover $C$ of $G$. Thus, by Proposition~\ref{propertyP}, 
$\vert \epsilon_i  \cap C  \vert =1$ for $i=1,2$. Hence, $\vert V(C^4) \cap C  \vert= 2$,
since $\epsilon_1 \cap \epsilon_2=\emptyset$. 

(c)$\Rightarrow$(a) We argue by contradiction assuming that 
$C^4=\{t_1,t_2,t_3,t_4,t_1\}$ is
a $4$-cycle of $G$ such that 
$\{t_1,t_2\}$ does not have the {\rm (P)} property. Then, there are 
$\{t_1,t'_1\},\{t_2,t'_2\} \in E(G)$ such that $\{t'_1,t'_2\} \notin E(G)$. Consequently,
there is a maximal stable set $A$ of $G$ such that $\{t'_1,t'_2\}
\subset A$.
So, $t_1 \notin A$ and $t_2 \notin A$. Furthermore $t_3 \notin A$ or $t_4 \notin A$,
since $\{t_3,t_4\} \in E(G)$.  We may assume $t_3 \notin A$. Since $A$ is a maximal
stable set, we have that $C:=V(G) \setminus A$ is a minimal vertex cover and 
$t_1, t_2, t_3  \in C$, a contradiction, since $\vert V(C^4) \cap C  \vert= 2$.
 \end{proof}

\begin{lemma}\label{Property-duplicated}
If $C^4=\{t_1,t_2,t_3,t_4,t_1\}$ is a $4$-cycle of a graph $G$ 
and each edge of $C^4$ has the {\rm (P)} property, then  
$N_G(t_1)=N_G(t_3)$, $N_G(t_2)=N_G(t_4)$, and $C^4$ is an induced cycle.
\end{lemma}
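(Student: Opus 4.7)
The plan is to unpack the {\rm (P)} property directly against the $4$-cycle $C^4=\{t_1,t_2,t_3,t_4,t_1\}$, handling inducedness first and then each pair of opposite vertices. Everything will follow from careful bookkeeping of which auxiliary edges are distinct from the base edge when invoking the {\rm (P)} property; no auxiliary result beyond simplicity of $G$ is needed.

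First I would establish that $C^4$ is induced. Suppose toward a contradiction that $\{t_1,t_3\}\in E(G)$. Both $\{t_1,t_3\}$ and $\{t_2,t_3\}$ are edges distinct from $e=\{t_1,t_2\}$ (using $t_2\neq t_3$ and $t_1\neq t_3$), so applying the {\rm (P)} property of $e$ with $t_1'=t_2'=t_3$ would force $\{t_3,t_3\}$ to be an edge of $G$, contradicting that $G$ is simple. Hence $\{t_1,t_3\}\notin E(G)$; the symmetric argument, applied to the edge $\{t_2,t_3\}$ with $t_1'=t_2'=t_4$, yields $\{t_2,t_4\}\notin E(G)$. Thus $C^4$ is induced.

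Next I would prove $N_G(t_1)\subseteq N_G(t_3)$. Fix $u\in N_G(t_1)$. If $u=t_2$ or $u=t_4$, then $u\in N_G(t_3)$ since $\{t_2,t_3\},\{t_3,t_4\}\in E(G)$; the case $u=t_3$ is excluded by the inducedness just proved. So we may assume $u\notin\{t_1,t_2,t_3,t_4\}$. Then $\{t_1,u\}$ and $\{t_2,t_3\}$ are both edges distinct from $e=\{t_1,t_2\}$ (the first because $u\neq t_2$, the second because $t_1\neq t_3$), and the {\rm (P)} property of $e$ yields $\{u,t_3\}\in E(G)$, i.e. $u\in N_G(t_3)$. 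The reverse inclusion $N_G(t_3)\subseteq N_G(t_1)$ follows by applying the same argument after the rotation $(t_1\,t_2\,t_3\,t_4)\mapsto(t_3\,t_2\,t_1\,t_4)$ of the cycle, under which the hypothesis is invariant; and $N_G(t_2)=N_G(t_4)$ is obtained identically by working with the opposite pair of vertices (using, say, the {\rm (P)} property of $\{t_2,t_3\}$).

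The only potential obstacle is minor: at each invocation of {\rm (P)} one must ensure that both auxiliary edges are genuinely distinct from the base edge. This reduces to tracking the inequalities $t_1\neq t_3$, $t_2\neq t_4$ (automatic for a $4$-cycle in a simple graph) together with $u\notin\{t_1,t_2,t_3,t_4\}$ in the generic case of the second step, so no real difficulty arises.
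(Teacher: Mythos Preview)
Your proof is correct and follows essentially the same approach as the paper: both arguments amount to applying the {\rm (P)} property of a cycle edge to the pair consisting of an arbitrary incident edge and the opposite cycle edge. The only cosmetic difference is ordering---you prove inducedness first and then the neighborhood equalities, whereas the paper establishes the neighborhood equalities directly (obtaining $t\neq t_3$ as a byproduct of the same {\rm (P)}-application) and reads off inducedness at the end from $N_G(t_1)=N_G(t_3)$ and simplicity of $G$.
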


\begin{proof} To show the inclusion $N_G(t_1)\subset N_G(t_3)$ take 
$t\in N_G(t_1)$, then $\{t,t_1\}$ and $\{t_2,t_3\}$ are edges of $G$.
We may assume $t\neq t_2$, otherwise $t\in N_G(t_3)$. 
As $\{t_1,t_2\}$ has 
the {\rm (P)} property, $t \neq t_3$ and $\{t,t_3\}\in E(G)$. 
Thus, $t\in N_G(t_3)$.  To show the inclusion ``$\supset$'' take 
$t'\in N_G(t_3)$, then $\{t',t_3\}$ and $\{t_1,t_4\}$ are edges of $G$.
We may assume $t'\neq t_4$, otherwise $t'\in N_G(t_1)$. 
As $\{t_3,t_4\}$ has 
the {\rm (P)} property, $t' \neq t_1$ and $\{t',t_1\}\in E(G)$. 
Thus, $t'\in N_G(t_1)$. The equality $N_G(t_2)=N_G(t_4)$ follows using
similar arguments. From the two equalities, we get that $C^4$ is an
induced cycle since $G$ is a simple graph.
\end{proof} 

\begin{theorem}\label{connected-duplicated}
Let $G$ be an unmixed graph without isolated vertices and let $I_c(G)$
be its ideal of covers. If the graph 
$\mathcal{G}_{I_c(G)}$ of $I_c(G)$ is connected, then 
\begin{enumerate}
\item[\rm(a)] $G$ has no duplicated vertices, and 
\item[\rm(b)] Every  $4$-cycle of $G$ has an edge that does not
satisfy the {\rm (P)} 
property. 
\end{enumerate}
\end{theorem}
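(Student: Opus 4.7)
\medskip

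The plan is to prove part (a) directly via a partition argument on $V(\mathcal{G}_{I_c(G)})=E(G^\vee)$, and then deduce part (b) at once from Lemma~\ref{Property-duplicated}.

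For part (a), I would argue by contradiction, assuming there are distinct duplicated vertices $t,t' \in V(G)$ with $N_G(t)=N_G(t')$. First I would observe that $t,t'$ cannot be adjacent: otherwise $t \in N_G(t')=N_G(t)$, impossible since $G$ has no loops. Next, I would show every minimal vertex cover $C$ of $G$ satisfies $\{t,t'\}\subset C$ or $\{t,t'\}\cap C=\emptyset$. Indeed, if $t\in C$ and $t'\notin C$, then $N_G(t')\subset C$ because $C$ is a cover, hence $N_G(t)\subset C$, and then $C\setminus\{t\}$ would still be a vertex cover (its only danger would be an edge $\{t,u\}$, but $u\in N_G(t)\subset C$ and $u\neq t$), contradicting minimality of $C$.

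Now partition $V(\mathcal{G}_{I_c(G)})=E(G^\vee)$ into
\[
\mathcal{F}_1=\{C\in E(G^\vee)\mid \{t,t'\}\subset C\},\quad \mathcal{F}_2=\{C\in E(G^\vee)\mid \{t,t'\}\cap C=\emptyset\}.
\]
Both are nonempty: since $t$ is not isolated, Lemma~\ref{existencia-cubiertas} provides minimal vertex covers containing $t$ (forcing $t'$ by the observation above, so in $\mathcal{F}_1$) and missing $t$ (hence in $\mathcal{F}_2$). By Theorem~\ref{eclipse-apr8-24}(a), every edge $\{C_1,C_2\}$ of $\mathcal{G}_{I_c(G)}$ satisfies $C_1\triangle C_2=\{t_k,t_\ell\}$ for some edge $\{t_k,t_\ell\}\in E(G)$. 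If such an edge crossed the partition, then $\{t,t'\}\subset C_1\triangle C_2=\{t_k,t_\ell\}$, forcing $\{t,t'\}\in E(G)$, contradicting the non-adjacency of $t$ and $t'$. Hence $\mathcal{G}_{I_c(G)}$ is disconnected, contradicting the hypothesis.

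For part (b), suppose some $4$-cycle $C^4=\{t_1,t_2,t_3,t_4,t_1\}$ of $G$ has every edge satisfying the {\rm (P)} property. By Lemma~\ref{Property-duplicated}, $N_G(t_1)=N_G(t_3)$, so $t_1$ and $t_3$ are duplicated vertices of $G$, contradicting part (a).

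The only subtle point is confirming that an edge from $\mathcal{F}_1$ to $\mathcal{F}_2$ must include both $t$ and $t'$ in the symmetric difference, which is where the $|C_1\triangle C_2|=2$ structure from Theorem~\ref{eclipse-apr8-24}(a) is used crucially; everything else is routine.
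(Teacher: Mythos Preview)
Your proof is correct and follows essentially the same partition argument as the paper: split $E(G^\vee)$ into covers containing both of $t,t'$ and covers containing neither, show both parts are nonempty via Lemma~\ref{existencia-cubiertas}, and conclude that no edge of $\mathcal{G}_{I_c(G)}$ can cross. The only cosmetic difference is in the final contradiction---the paper observes $|C_i\setminus C_{i+1}|\geq 2$ directly along a crossing step of a path, while you invoke Theorem~\ref{eclipse-apr8-24}(a) together with the (extra) observation that duplicated vertices are non-adjacent---and part~(b) is identical.
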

 
 \begin{proof} (a)  Let $E(G^\vee)$ be the set of the minimal vertex
 covers of $G$. 
By contradiction suppose $t$ and $t'$ are two duplicated vertices of $G$.  
 We let
 $$
 A=\{C \in E(G^\vee) \mid  t,t' \in C\} \mbox{ and } B=\{C
 \in E(G^\vee) \mid  t \notin C \mbox{ and } t' \notin C \}. 
$$
\quad Now assume there is $C \in E(G^\vee) \setminus (A \cup B)$. Thus, 
 $\vert \{t,t'\} \cap C \vert =1$.  We may assume $t \in C$  and $t' \notin C$, 
 then  $N_G(t') \subset C$. Since  $t$ and $t'$ are duplicated,  $N_G(t)=N_G(t')$.
 Consequently, $C \setminus \{t\}$ is a vertex cover, a contradiction,
 since $C$ is minimal. Hence, $E(G^\vee)=A \cup B$. As $t$ is not an
 isolated vertex, by Lemma~\ref{existencia-cubiertas}, there are
minimal vertex covers  $C$ and $C'$ such that $t\in C'$ and 
$t\notin C$. Then, $C'\notin B$ (resp. $C\notin A$) because $t\in C'$
(resp. $t\notin C$). Thus, by the equality $E(G^\vee)=A \cup B$, one
has $C'\in A$ and $C\in B$. As $\mathcal{G}_{I_c(G)}$ is connected,
there is a path $\mathcal{P}=\{C_1=C',C_2,\ldots,C_n=C\}$ between $C'$
and $C$. Pick $i$ such that $C_i\in A$ and $C_{i+1}\in B$. Hence, 
$\{t,t'\}\subset C_i \setminus C_{i+1}$ and 
$|C_i \setminus C_{i+1}| \geq 2$, a contradiction since
$\{C_i,C_{i+1}\}\in E(\mathcal{G}_{I_c(G)})$. 

(b) By contradiction assume that $C^4$ is a 4-cycle of $G$ whose
edges have the (P) property, then $G$ has duplicated vertices by
Lemma~\ref{Property-duplicated}, a contradiction to part (a). 
 \end{proof} 

\begin{remark}\label{induced-4-cycle}
 If $C^4=\{t_1,t_2,t_3,t_4,t_1\}$ is a $4$-cycle with chord
$\{t_1,t_3\}$, then $\{t_1,t_2\}$  does not satisfy 
the {\rm (P)} property, since $\{t_1,t_3\}, \{t_2,t_3\} \in E(G)$, 
$\{t_3,t_3\} \notin E(G)$ and recalling that $G$ is a simple graph.
Therefore, condition (b) of Theorem~\ref{connected-duplicated}
is equivalent to (b') \textit{Every induced $4$-cycle has an edge 
without the {\rm (P)} property}. 
\end{remark}

\begin{proposition}\label{konig type}
If $P$ is a perfect matching with the {\rm (P)} property of a graph $G$, 
then $P$ is a perfect matching of $G$ of K\"onig type.
\end{proposition}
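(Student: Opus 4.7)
The plan is to apply the characterization of the (P) property given in Proposition~\ref{propertyP} and then count vertices of a minimum vertex cover against the edges of $P$. Since $P$ is a perfect matching, the edges of $P$ partition $V(G)$ into pairwise disjoint pairs, so any subset of $V(G)$ is the disjoint union of its intersections with the edges of $P$. This partition property will immediately convert information about $|e \cap C|$ for $e \in P$ into information about $|C|$.

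First, take any minimum vertex cover $C$ of $G$, so that $|C| = \alpha_0(G)$. Since $C$ is in particular a minimal vertex cover (otherwise a proper subset would be a vertex cover of smaller size), Proposition~\ref{propertyP} applies: for each $e \in P$, the hypothesis that $e$ has the (P) property gives $|e \cap C| = 1$. Using that $\{e\}_{e \in P}$ partitions $V(G)$ because $P$ is perfect, we compute
\begin{equation*}
\alpha_0(G) = |C| = \sum_{e \in P} |e \cap C| = \sum_{e \in P} 1 = |P|.
\end{equation*}
Hence $|P| = \alpha_0(G)$, which is precisely the definition of $P$ being a perfect matching of K\"onig type.

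There is no real obstacle here: the only subtlety is observing that a minimum vertex cover is automatically minimal (so that Proposition~\ref{propertyP} is applicable), and that a perfect matching yields a partition of $V(G)$ so the sum $\sum_{e \in P}|e \cap C|$ telescopes to $|C|$. Both are immediate from the definitions, so this is a short proof that does not require any further machinery beyond Proposition~\ref{propertyP}.
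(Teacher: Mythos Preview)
Your proof is correct and follows essentially the same approach as the paper: both invoke Proposition~\ref{propertyP} to get $|e\cap C|=1$ for each $e\in P$, and then use that a perfect matching partitions $V(G)$ to conclude $|C|=|P|$. The only cosmetic difference is that the paper takes an arbitrary minimal vertex cover (obtaining unmixedness as a byproduct), while you pick a minimum one directly; either way the conclusion $\alpha_0(G)=|P|$ follows immediately.
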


\begin{proof} 
We assume $P=\{\epsilon_1, \ldots, \epsilon_m\}$ and we take
$C$ a minimal vertex cover of $G$. Then,
by Proposition~\ref{propertyP}, $\vert \epsilon_i \cap C  \vert =1$ 
for each $1 \leq i \leq m$. Thus, $\vert C \vert= m$,
since $P$ is a partition of $V(G)$. Hence,
$G$ is unmixed and $\alpha_0(G)= m=|P|$, that is, 
$P$ is a perfect matching of  K\"onig type.
\end{proof}

\begin{remark}\label{Koning-type}
By Proposition~\ref{konig type}, we can 
remove the hypothesis that $P$ is of K\"onig type in parts (a) and
(b) of Proposition~\ref{unmixed-Koning-CM}. 
\end{remark}

\begin{theorem}\label{Konig-connectd}
 Let $G$ be an unmixed K\"onig graph without isolated vertices. The following
conditions are equivalent. 
\begin{enumerate}
\item[\rm(a)] G is Cohen--Macaulay.
\item[\rm(b)] $I_c(G)$ is linearly presented.
\item[\rm(c)] $\mathcal{G}_{I_c(G)}$ is connected.
\item[\rm(d)] $G$ has no duplicated vertices.
\item[\rm(e)] Every induced $4$-cycle of $G$ has an edge without the {\rm (P)} property.
\item[\rm(f)] $G$ has a unique perfect matching.
\end{enumerate}
\end{theorem}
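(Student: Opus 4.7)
The plan is to prove a cyclic chain $(a)\Rightarrow(b)\Rightarrow(c)\Rightarrow(d)\Rightarrow(e)\Rightarrow(a)$ to establish the equivalence of the first five conditions, and then to handle $(a)\Leftrightarrow(f)$ separately. The first four implications are each a one-shot application of earlier material: $(a)\Rightarrow(b)$ is Eagon--Reiner (Theorem~\ref{eagon-reiner-terai}(a)), since a linear resolution is in particular linearly presented; $(b)\Rightarrow(c)$ is Proposition~\ref{Bigdeli-Herzog-j}; $(c)\Rightarrow(d)$ is part~(a) of Theorem~\ref{connected-duplicated}; and $(d)\Rightarrow(e)$ is the contrapositive of Lemma~\ref{Property-duplicated}, since an induced $4$-cycle whose edges all satisfy the~{\rm (P)} property would produce duplicated vertices.

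The substantive step in the cyclic chain is $(e)\Rightarrow(a)$. First I would invoke Proposition~\ref{unmixed-Koning-CM}(a) (with Remark~\ref{Koning-type}) to fix a perfect matching $P$ with the~{\rm (P)} property, after which Proposition~\ref{unmixed-Koning-CM}(b) reduces the problem to ruling out any $4$-cycle $\{t_1,t_2,t_3,t_4,t_1\}$ with $\{t_1,t_2\},\{t_3,t_4\}\in P$. Applying the~{\rm (P)} property of the two $P$-edges to the candidate chords $\{t_1,t_3\}$ and $\{t_2,t_4\}$ forces a loop, hence neither chord exists and $C^4$ is induced. The crux is to show that the remaining edges $\{t_2,t_3\}$ and $\{t_1,t_4\}$ also have the~{\rm (P)} property: given edges $\{t_2,u\},\{t_3,v\}\ne\{t_2,t_3\}$, I would chain the~{\rm (P)} property of $\{t_1,t_2\}$ applied to $\{t_1,t_4\},\{t_2,u\}$ to produce $\{t_4,u\}\in E(G)$, and then the~{\rm (P)} property of $\{t_3,t_4\}$ applied to $\{t_3,v\},\{t_4,u\}$ to conclude $\{u,v\}\in E(G)$, handling the boundary cases $u=t_1$ and $v=t_4$ by inspection. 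This would make $C^4$ an induced $4$-cycle with all edges having the~{\rm (P)} property, contradicting~(e).

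For $(a)\Leftrightarrow(f)$: direction $(f)\Rightarrow(a)$ is easy, since by Proposition~\ref{unmixed-Koning-CM}(a) the unique perfect matching $P$ has the~{\rm (P)} property, and any $4$-cycle with two edges in $P$ would admit the standard swap of its two $P$-edges for its two non-$P$-edges, producing a second perfect matching and contradicting uniqueness; Proposition~\ref{unmixed-Koning-CM}(b) then yields~(a). For $(a)\Rightarrow(f)$, I would fix $P$ with the~{\rm (P)} property and no $4$-cycle having two edges in $P$, assume for contradiction that a second perfect matching $P'$ exists, and pick an alternating cycle $C=(v_1,\ldots,v_{2k})$ in $P\triangle P'$ of minimum length, with $\{v_{2i-1},v_{2i}\}\in P$ and $\{v_{2i},v_{2i+1}\}\in P'$ (indices taken cyclically). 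The case $k=2$ directly contradicts the no-$4$-cycle-in-$P$ property; for $k\ge 3$, the~{\rm (P)} property of $\{v_1,v_2\}$ applied to $\{v_1,v_{2k}\}$ and $\{v_2,v_3\}$ produces a new edge $\{v_{2k},v_3\}\in E(G)$, and replacing $\{v_{2k},v_1\},\{v_2,v_3\}\in P'$ by $\{v_1,v_2\},\{v_{2k},v_3\}$ yields a third perfect matching $P''$; a direct symmetric-difference computation then shows $P\triangle P''$ contains an alternating cycle of length $2(k-1)$ on $\{v_3,\ldots,v_{2k}\}$, violating the minimality of $2k$.

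The main obstacle is the alternating-cycle shortening in $(a)\Rightarrow(f)$: I must verify carefully that $\{v_{2k},v_3\}\in E(G)\setminus(P\cup P')$ (using that both $v_{2k}$ and $v_3$ are already matched elsewhere in $P$ and in $P'$ when $k\ge 3$), that $P''$ is a genuine perfect matching, and that the cycle obtained on $\{v_3,\ldots,v_{2k}\}$ is again alternating between $P$ and $P''$. The detailed case analysis for the~{\rm (P)} property of the non-$P$ edges of the $4$-cycle in step $(e)\Rightarrow(a)$ is a secondary technical point.
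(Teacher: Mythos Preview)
Your cyclic chain $(a)\Rightarrow(b)\Rightarrow(c)\Rightarrow(d)\Rightarrow(e)\Rightarrow(a)$ matches the paper's route exactly, with the same references. For $(e)\Rightarrow(a)$ the only difference is cosmetic: where you chain the {\rm (P)} property of the two $P$-edges by hand to show all four edges of $C^4$ have the {\rm (P)} property, the paper simply invokes Proposition~\ref{propertyP2}, which packages precisely that argument (two disjoint edges with the {\rm (P)} property force all four). Your hands-on version is fine and amounts to re-proving the relevant implication of that proposition.

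Where you genuinely diverge is $(a)\Leftrightarrow(f)$. The paper does not argue this from scratch: it observes that an unmixed K\"onig graph without isolated vertices is very well-covered and then cites Crupi--Rinaldo--Terai~\cite[Theorem~0.2]{Crupi-Rinaldo-Terai}, which says that a very well-covered graph is Cohen--Macaulay if and only if it has a unique perfect matching. Your self-contained alternating-cycle argument is a legitimate alternative and avoids the external reference, at the cost of some extra work. One point to tighten: in $(a)\Rightarrow(f)$ you fix $P'$ first and then pick an alternating cycle in $P\triangle P'$ of minimum length, but the shorter cycle you produce lives in $P\triangle P''$, not $P\triangle P'$, so as written it does not contradict the stated minimality. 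The fix is to minimise jointly over all perfect matchings $Q\neq P$ and all cycles in $P\triangle Q$ (or, equivalently, to phrase the reduction as an induction on $k$). With that adjustment your argument goes through.
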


\begin{proof}  
(a) $ \Rightarrow$ (b)  By Eagon-Reiner criterion $I_c(G)$ has a
linear resolution (Theorem~\ref{eagon-reiner-terai}). 
Hence, $I_c(G)$ is linearly presented.

(b) $ \Rightarrow$ (c) $\Rightarrow$ (d)  $\Rightarrow$ (e)
These implications follow from 
Proposition~\ref{Bigdeli-Herzog-j},
Theorem~\ref{connected-duplicated}, and 
Lemma~\ref{Property-duplicated}, respectively.

(e) $\Rightarrow$ (a) By Proposition~\ref{unmixed-Koning-CM}(a), 
$G$ has a perfect matching  $P$ of K\"onig type
with the  {\rm (P)} property.
By contradiction assume $G$ is not Cohen--Macaulay.
Then, by Proposition~\ref{unmixed-Koning-CM}(b), there is a
$4$-cycle $C^4$ with two edges in $P$. 
Thus, by Proposition~\ref{propertyP2} (cf.
Lemma~\ref{Property-duplicated}),
 each edge of $C^4$ 
has the  {\rm (P)} property. 
Hence, by Remark~\ref{induced-4-cycle}, $C^4$ is induced, a contradiction to (e).

(a)$\Leftrightarrow$(f) An unmixed K\"onig graph $G$ without isolated
vertices has a perfect matching of K\"onig type 
\cite[Lemma~2.3]{MRV}. Thus, $G$ is very well-covered. 
Hence, by \cite[Theorem~0.2]{Crupi-Rinaldo-Terai}, $G$ is Cohen--Macaulay if and only
if $G$ has a unique perfect matching. 
\end{proof}

\begin{lemma} \label{lemme-G(I_c(C))}
Let $G,H$ be unmixed graphs such that $H \subset G$. If  $V(H)=V(G)$ and 
$\alpha_0(G)=\alpha_0(H)$, then 
$\mathcal{G}_{I_c(G)} \subset \mathcal{G}_{I_c(H)}$.
\end{lemma}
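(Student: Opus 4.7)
The plan is to verify the two containments defining a subgraph, first on vertices and then on edges, using the unmixedness hypothesis to turn ``vertex cover'' into ``minimal vertex cover''.

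First I would show that every vertex of $\mathcal{G}_{I_c(G)}$ is a vertex of $\mathcal{G}_{I_c(H)}$. Let $C\in E(G^\vee)$, so $C$ is a minimal vertex cover of $G$ with $|C|=\alpha_0(G)$ since $G$ is unmixed. Because $E(H)\subset E(G)$, the set $C$ is automatically a vertex cover of $H$. Thus $C$ contains a minimal vertex cover $C'$ of $H$, and
\[
|C'|\leq |C|=\alpha_0(G)=\alpha_0(H).
\]
Since $\alpha_0(H)\leq |C'|$, equality forces $C'=C$, so $C\in E(H^\vee)$, i.e. $C\in V(\mathcal{G}_{I_c(H)})$.

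Next I would check edge containment. Suppose $\{C_i,C_j\}\in E(\mathcal{G}_{I_c(G)})$. By the definition of the graph of the ideal of covers and the unmixedness of $G$, this is equivalent to $|C_i\cap C_j|=\alpha_0(G)-1$. By the first step both $C_i$ and $C_j$ lie in $E(H^\vee)$, and they have cardinality $\alpha_0(H)=\alpha_0(G)$. Then
\[
|C_i\cap C_j|=\alpha_0(G)-1=\alpha_0(H)-1=|C_i|-1,
\]
which is exactly the condition for $\{C_i,C_j\}$ to be an edge of $\mathcal{G}_{I_c(H)}$. Combining the two steps yields $\mathcal{G}_{I_c(G)}\subset \mathcal{G}_{I_c(H)}$.

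There is essentially no obstacle here; the only subtle point is the implicit use of the fact that in an unmixed clutter every vertex cover contains a minimum vertex cover, so any vertex cover whose cardinality equals the covering number must itself be minimal. Once that is observed, the vertex step is immediate and the edge step is just arithmetic on $|C_i\cap C_j|$.
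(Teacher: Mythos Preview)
Your proof is correct and follows essentially the same approach as the paper's: show vertex containment using that a minimal vertex cover of $G$ is a vertex cover of $H$ of size $\alpha_0(H)$ and hence minimal, then deduce edge containment from the intersection condition. You are in fact slightly more careful than the paper in justifying why $C$ is \emph{minimal} as a vertex cover of $H$.
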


\begin{proof}  
We set  $\mathcal{G}_1=\mathcal{G}_{I_c(G)}$ and
$\mathcal{G}_2=\mathcal{G}_{I_c(H)}$. We take
$C \in V(\mathcal{G}_1)$, then $C$ is a minimal vertex
cover of the graph $G$. 
So, $C \subset V(G)=V(H)$ and $|C|=\alpha_0(G)
=\alpha_0(H)$. Thus, $C$ is a minimal vertex cover of $H$, since
$E(H) \subset E(G)$. Hence,  $C \in V(\mathcal{G}_2)$.
Furthermore, if 
 $\{C_1,C_2\} \in E(\mathcal{G}_1)$, then 
 $\{C_1,C_2\} \in E(\mathcal{G}_2)$.
Therefore, $\mathcal{G}_1 \subset \mathcal{G}_2$.
\end{proof}

\begin{theorem}\label{CM-connectd1}
If $G$ is a Cohen--Macaulay K\"onig graph without isolated vertices, then 
\begin{enumerate}
\item[\rm(a)] $G$ has a unique perfect matching $P$ which is the set
of exchange edges of $G$, and
\item[\rm(b)] $\mathcal{G}_{I_c(G)}$ is a bipartite graph.
\end{enumerate}
\end{theorem}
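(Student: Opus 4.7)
The plan is to leverage the equivalences already established in Theorem~\ref{Konig-connectd}: since $G$ is a Cohen--Macaulay K\"onig graph without isolated vertices, $G$ has a unique perfect matching $P$, $\mathcal{G}_{I_c(G)}$ is connected, and $I_c(G)$ is linearly presented. Moreover, Proposition~\ref{unmixed-Koning-CM}(a) (combined with Remark~\ref{Koning-type}) guarantees that $P$ has the $(\mathrm{P})$ property, since $G$ is unmixed and any perfect matching with the $(\mathrm{P})$ property is a perfect matching of K\"onig type and must coincide with the unique $P$.

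For part~(a), the key idea is that the $(\mathrm{P})$ property of $P$ forces every exchange to occur along an edge of $P$. Concretely, given an exchange edge $\{t,t'\}$ realized by minimal vertex covers $C$ and $C'=(C\setminus\{t\})\cup\{t'\}$, let $\epsilon=\{t,u\}$ be the (unique) edge of $P$ containing $t$. By Proposition~\ref{propertyP} we have $|\epsilon\cap C|=|\epsilon\cap C'|=1$, so from $t\in C\setminus C'$ we deduce $u\in C'\setminus C=\{t'\}$, hence $\{t,t'\}=\epsilon\in P$. This shows the set $E_{\mathrm{ex}}$ of exchange edges is contained in $P$. Conversely, since $\mathcal{G}_{I_c(G)}$ is connected, Proposition~\ref{connected-exchange} implies $V(G)=\bigcup_{e\in E_{\mathrm{ex}}} e$. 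As $E_{\mathrm{ex}}\subset P$ is a matching covering all vertices, $E_{\mathrm{ex}}$ is itself a perfect matching, and by the uniqueness of $P$ we conclude $E_{\mathrm{ex}}=P$.

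For part~(b), I would compare $G$ to its perfect matching subgraph. Let $H$ be the spanning subgraph of $G$ with edge set $E(H)=P$; then $H$ is a disjoint union of $r=\alpha_0(G)$ edges, so $H$ is unmixed, $V(H)=V(G)$, and $\alpha_0(H)=r=\alpha_0(G)$. Applying Lemma~\ref{lemme-G(I_c(C))} we obtain the inclusion $\mathcal{G}_{I_c(G)}\subset \mathcal{G}_{I_c(H)}$. But $H$ is isomorphic (after relabeling) to the graph $H_r$ of Proposition~\ref{I_c-bipartite}, and part~(a) of that proposition tells us $\mathcal{G}_{I_c(H)}\cong\mathcal{H}_r$ is bipartite. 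Hence $\mathcal{G}_{I_c(G)}$, being a subgraph of a bipartite graph, is bipartite.

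The main potential obstacle is verifying that Proposition~\ref{connected-exchange} and Lemma~\ref{lemme-G(I_c(C))} apply cleanly in this setting, but both have exactly the standing hypotheses we have available (unmixed, without isolated vertices, connected $\mathcal{G}_{I_c(G)}$). A subtler point worth spelling out is why exactly every exchange edge lies in $P$: it is crucial that the $(\mathrm{P})$ property is shared by the \emph{unique} perfect matching, which is why the Cohen--Macaulay hypothesis is used twice, once to guarantee uniqueness of $P$ and once (via linear presentation) to guarantee connectedness of $\mathcal{G}_{I_c(G)}$.
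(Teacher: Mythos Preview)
Your proof is correct. Part~(b) is essentially identical to the paper's argument: both invoke Lemma~\ref{lemme-G(I_c(C))} to embed $\mathcal{G}_{I_c(G)}$ into $\mathcal{G}_{I_c(P)}$ and then appeal to Proposition~\ref{I_c-bipartite} to see that the latter is bipartite.

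Part~(a), however, takes a genuinely different and more direct route than the paper. The paper proceeds by induction on $|V(G)|$: it uses Proposition~\ref{CM-free-vertex} to find a vertex $t$ of degree one, removes the edge $\epsilon_1=\{t,t'\}$ of $P$ through it, applies the induction hypothesis to $G'=G\setminus N_G[t]$ to get $P'=Ex(G')$, and then lifts this back to $G$ to show $P\subset Ex(G)$; the reverse containment is argued by contradiction using the (P) property of $\epsilon_1$. Your argument bypasses the induction entirely: you observe that the (P) property of $P$ forces \emph{every} exchange to slide along an edge of $P$ (the computation $u\in C'\setminus C=\{t'\}$), giving $Ex(G)\subset P$ immediately, and then you use connectedness of $\mathcal{G}_{I_c(G)}$ (via Theorem~\ref{Konig-connectd}) together with Proposition~\ref{connected-exchange} to see that $Ex(G)$ already covers $V(G)$, hence equals $P$ by uniqueness. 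This is shorter and avoids the free-vertex machinery; the trade-off is that you rely on the equivalence $(\mathrm{a})\Leftrightarrow(\mathrm{f})$ in Theorem~\ref{Konig-connectd} (which in the paper's development invokes \cite{Crupi-Rinaldo-Terai}), whereas the paper's inductive proof of Theorem~\ref{CM-connectd1}(a) is more self-contained in that it re-derives uniqueness along the way.
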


\begin{proof}(a) By induction on $\vert V(G) \vert\geq 2$. If
$|V(G)|=2$, the only Cohen--Macaulay K\"onig graph without isolated
vertices is an edge $\{t_1,t_2\}$ and $P=\{\{t_1,t_2\}\}$ is the 
only perfect matching. To see that $\{t_1,t_2\}$ is an exchange edge 
note that $C_1=\{t_1\}$ and $C_2=\{t_2\}$ are minimal vertex covers of
$G$ and $(C_1\setminus\{t_1\})\cup\{t_2\}=C_2$. Assume $|V(G)|>2$. 
From Proposition~\ref{unmixed-Koning-CM}(b), $G$ has a perfect 
matching $P$ with the  {\rm (P)} property. 
By 
Proposition~\ref{CM-free-vertex}, there is $t \in V(G)$ such that $N_G(t)=\{t'\}$. 
So, $\epsilon_1=\{t,t'\} \in P$, since $P$ is a partition of $V(G)$.
Now, we let 
$G'=G \setminus N_G[t]$, then $G'$ is Cohen--Macaulay because being
Cohen--Macaulay is closed under c-minors \cite[6.3.53]{monalg-rev}. Furthermore, 
$P':=P \setminus \{\epsilon_1\}$ is a perfect matching of $G'$ with 
the  {\rm (P)} property in $G'$, since $G'=G \setminus \{t,t'\}$. Thus,
by Proposition~\ref{propertyP},  $\alpha_0(G')=\vert P' \vert$, since $P'$ is a partition
of $V(G')$.  Then, $G'$ is a  K\"onig graph. Hence, by induction hypothesis, 
$P'$ is the unique perfect matching  of $G'$ and $P'=Ex(G')$, where 
$Ex(G)$ and $Ex(G')$ are the exchange edges of $G$ and $G'$,
respectively. 

We let $\mathcal{C}=E(G^\vee)$ and $\mathcal{C}'=E((G')^\vee)$ be 
the set of minimal vertex covers of
the graphs $G$ and $G'$, respectively. Take $\{ t_1,t_2\} \in P'=Ex(G')$, then
there are  
$C'_1, C'_2 \in \mathcal{C}'$ such that $C'_2=(C'_1 \setminus \{t_1\}) \cup \{t_2\}$.
Consequently, $C_1=C'_1 \cup \{t'\}, C_2=C'_2 \cup \{t'\} \in  \mathcal{C}$, since
$G'=G \setminus \{t,t'\}$ and $N_G(t)=\{t'\}$. Furthermore, if  
$C_2=(C_1 \setminus \{t_1\}) \cup \{t_2\}$, then $\{t_1,t_2\} \in Ex(G)$. This implies,
$P' \subset Ex(G)$. On the other hand, by Lemma~\ref{existencia-cubiertas}, 
there is $\overline{C} \in \mathcal{C}$ such that
$t \in \overline{C}$, then $(\overline{C} \setminus \{t\}) \cup \{t'\} \in \mathcal{C}$,
since $N_G(t)=\{t'\}$. Hence, $\epsilon_1=\{t,t'\} \in Ex(G)$ and $P \subset
Ex(G)$. 

Now, we will prove that $P=Ex(G)$. By contradiction assume there is
$\{\overline{t}_1, \overline{t}_2\} \in Ex(G) \setminus P$, then there are 
$C_1,C_2 \in \mathcal{C}$ such that  $C_2=(C_1 \setminus \{\overline{t}_1\}) \cup \{\overline{t}_2\}$.
By Proposition~\ref{propertyP}, 
$$\vert C_1 \cap \epsilon_1 \vert=1=\vert C_2 \cap \epsilon_1 \vert,$$
since
$\epsilon_1$ has the  {\rm (P)} property. Thus, $C_1=C'_1 \cup \{\tilde{t}\}$ and
$C_2=C'_2 \cup \{\tilde{t}'\}$, where $C'_1,C'_2 \in E((G')^\vee)$ and 
$\{\tilde{t}, \tilde{t}'\} \subset \{t,t'\}$. So, $C'_1, C'_2 \in \mathcal{C}'$,
since $C_1,C_2 \in \mathcal{C}$ and 
$\alpha_0(G')=\vert P'  \vert=\vert P \vert -1=\alpha_0(G)-1$. If 
$\tilde{t} \neq \tilde{t}'$, then $C'_1=C'_2$ and
$\{\overline{t}_1,\overline{t}_2\}=\{t,t'\}=\epsilon_1$.
A contradiction, since $\{\overline{t}_1,\overline{t}_2\} \notin P$. 
Consequently, $\tilde{t}=\tilde{t}'$ and 
$C'_2=(C'_1 \setminus \{ \overline{t}_1 \}) \cup \{ \overline{t}_2 \}$. 
Hence, $\{\overline{t}_1,\overline{t}_2\} \in Ex(G')=P'\subset P$.
This is a contradiction. Therefore, $P=Ex(G)$.

(b) By Proposition~\ref{propertyP},  $\alpha_0(G)=\vert P \vert=\alpha_0(P)$. 
Thus, by Lemma~\ref{lemme-G(I_c(C))}, 
$\mathcal{G}_{I_c(G)} \subset \mathcal{G}_{I_c(P)}$,
since $P \subset G$ and $V(P)=V(G)$. Furthermore, 
by Proposition~\ref{I_c-bipartite},  the graph $\mathcal{G}_{I_c(P)}$ is bipartite. 
Hence, $\mathcal{G}_{I_c(G)} $  is bipartite.
\end{proof}

Recall that if $G$ is a graph and $L\subset V(G)$, then $G[L]$ denotes the subgraph of $G$
induced by $L$. The vertex set of $G[L]$ is $L$ and $e\in E(G[L])$ if and
only if $e\subset L$ and $e\in E(G)$.

\begin{lemma}\label{Covers-sym-dif}
Let $G$ be an unmixed graph with distinct minimal vertex covers
$C_1$ and $C_2$. If $C'$ is a vertex cover of $G':=G[C_1 \triangle C_2]$, then the following
conditions hold: 
\begin{enumerate}
\item[\rm(a)]  $C' \cup A$ is a vertex cover of $G$, where $A:=C_1 \cap C_2$.
\item[\rm(b)] $G'$ is a bipartite graph with a perfect matching $P$ and
bipartition $(B_1,B_2)$ such that $\vert B_1 \vert=\vert B_2 \vert=\vert P \vert=\alpha_0(G')$,
where $B_1:=C_1 \setminus C_2$ and $B_2:=C_2 \setminus C_1$. 
\end{enumerate}
\end{lemma}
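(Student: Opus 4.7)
\textbf{Proof plan for Lemma~\ref{Covers-sym-dif}.} For part (a), the plan is a short case analysis on an arbitrary edge $e\in E(G)$. Set $A=C_1\cap C_2$, $B_1=C_1\setminus C_2$, $B_2=C_2\setminus C_1$. If $e\cap A\neq\emptyset$, then $e\cap(C'\cup A)\neq\emptyset$ automatically. Otherwise, since $C_1$ and $C_2$ both cover $e$ but $e\cap A=\emptyset$, one endpoint of $e$ must lie in $C_1\setminus A=B_1$ and the other in $C_2\setminus A=B_2$ (if both lay in $B_1$, then $e\cap C_2=\emptyset$, contradicting that $C_2$ is a vertex cover). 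Hence $e\subset B_1\cup B_2=C_1\triangle C_2$, so $e\in E(G')$ and $e\cap C'\neq\emptyset$. This yields $e\cap (C'\cup A)\neq\emptyset$, proving (a).

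For part (b), the first observation is that $V(G')=B_1\sqcup B_2$ is a bipartition of $G'$: an edge $e\in E(G')$ with $e\subset B_1$ would give $e\cap C_2=\emptyset$, contradicting that $C_2$ covers $G$, and symmetrically for $B_2$. Thus $G'$ is bipartite with parts $B_1,B_2$. Since $G$ is unmixed, $|C_1|=|C_2|=\alpha_0(G)$, and subtracting $|A|$ from both sides gives $|B_1|=|B_2|$. To identify $\alpha_0(G')$: the set $B_1$ is a vertex cover of $G'$ (every edge of $G'$ joins $B_1$ and $B_2$), so $\alpha_0(G')\leq|B_1|$. Conversely, given any vertex cover $C''$ of $G'$, part (a) shows that $C''\cup A$ is a vertex cover of $G$, so
\[
|C''|+|A|\;\geq\;|C''\cup A|\;\geq\;\alpha_0(G)\;=\;|C_1|\;=\;|A|+|B_1|,
\]
which forces $|C''|\geq|B_1|$. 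Therefore $\alpha_0(G')=|B_1|=|B_2|$.

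To produce the perfect matching, I would invoke König's theorem (Theorem~\ref{konig-theorem}) on the bipartite graph $G'$: it yields a matching $P\subset E(G')$ with $|P|=\alpha_0(G')=|B_1|=|B_2|$. Since each edge of $P$ has exactly one endpoint in $B_1$ and one in $B_2$, and $|P|$ equals the common size $|B_1|=|B_2|$, the matching $P$ saturates both sides and is therefore a perfect matching of $G'$, completing (b). The only nontrivial step is establishing $\alpha_0(G')=|B_1|$, which is where the unmixedness of $G$ and part (a) are both essential; the remaining claims are structural observations about the symmetric difference of two covers.
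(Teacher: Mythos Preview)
Your proof is correct and follows essentially the same route as the paper's: the same case split on $e\cap A$ for (a), and for (b) the same use of part (a) to bound $\alpha_0(G')$ from below, together with K\"onig's theorem to produce the matching. The only cosmetic difference is that you deduce $|B_1|=|B_2|$ directly from $|C_1|=|C_2|$ before computing $\alpha_0(G')$, whereas the paper obtains both equalities simultaneously from the two-sided bound; also note that in your displayed inequality $|C''|+|A|\geq|C''\cup A|$ is actually an equality since $C''\subset C_1\triangle C_2$ is disjoint from $A$.
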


\begin{proof} 
(a) Let $C'$ be any vertex cover of $G'$. 
We prove that $C' \cup A$ is a vertex cover of $G$, where $A=C_1 \cap C_2$.
To show this, take $\epsilon \in E(G)$. We may assume $\epsilon \cap A=\emptyset$.
Since $C_i$ is vertex cover,  there is $t_i \in \epsilon \cap C_i$ for $i=1,2$. 
Then, $t_1 \notin C_2$ and  $t_2 \notin C_1$, since 
$\epsilon \cap (C_1 \cap C_2)=\epsilon \cap A=\emptyset$.
So, $t_1 \in C_1 \setminus C_2$ and $t_2 \in C_2 \setminus C_1$.
Thus, $\epsilon=\{t_1,t_2\} \subset  C_1 \triangle C_2=V(G')$.
Hence, $\epsilon \cap C' \neq \emptyset$, since $C'$ is a vertex
cover of $G'$. Therefore, $C' \cup A$ is a vertex cover of $G$.

(b) We have $B_1=C_1 \setminus C_2$ and $B_2= C_2 \setminus C_1$. 
In particular  $B_1 \cap C_2=\emptyset$ and $B_2 \cap C_1=\emptyset$, then
$B_1$ and $B_2$ are stable sets of $G$, since $C_2$ and $C_1$ are vertex covers. 
Consequently, $G'$ is  bipartite with bipartition $(B_1,B_2)$, since $V(G')=C_1 \triangle C_2=B_1 \cup B_2$
and  $B_1 \cap B_2= \emptyset$. Thus, by K\"onig's theorem
(Theorem~\ref{konig-theorem}),  
$G'$ is a K\"onig graph since $G'$ is bipartite. So, there is  a matching $P$ of $G'$ such that 
$\vert P \vert= \alpha_0(G')$.  Furthermore,
by (a),  we have $C' \cup A$ is a vertex cover of $G$ , for each vertex cover $C'$ of $G'$. Then,
\begin{equation}\label{vert-ago-24}
 \vert C' \cup  A \vert  \geq  \alpha_0(G) = \vert C_i \vert =\vert B_i \cup A \vert \mbox{  for }
 i=1,2,
\end{equation}
since $C_i$ is a minimal vertex cover and $C_i=B_i \cup A$.
Notice that $C' \cap A \subset V(G') \cap A=\emptyset$ and $B_i \cap A=\emptyset$.
Consequently, by Eq.~\eqref{vert-ago-24},  we have $\vert C' \vert  \geq  \vert B_i \vert$  for each vertex cover
$C'$ of $G'$. Thus, $\alpha_0(G') \geq \vert B_i  \vert$. Also, $B_1$, $B_2$ are vertex covers of $G'$,  
since  $B_2=V(G') \setminus B_1$, and $B_2=V(G') \setminus B_2$. 
Hence,  $\vert B_1 \vert=\alpha_0(G')=\vert B_2 \vert$. 
Recall  $\alpha_0(G')=\vert P \vert$, then 
$$\vert V(G' ) \vert = \vert B_1 \vert + \vert B_2 \vert =2
\alpha_0(G') =2 \vert P \vert.$$
\quad Therefore, $P$ is a perfect matching of $G'$.
\end{proof}

\begin{theorem}\label{noLinear-noCM}
If $G$ is an unmixed graph and $J=I_c(G)$
is not linearly presented, then there are
distinct minimal vertex covers $C_1$ and $C_2$ 
such that $G'=G[C_1 \triangle C_2]$ is an unmixed 
bipartite graph without isolated vertices and 
$G'$ is not Cohen-Macaulay.
\end{theorem}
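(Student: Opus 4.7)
The plan is to invoke Proposition~\ref{Bigdeli-Herzog-j}: since $J = I_c(G)$ is not linearly presented, there exists a pair of distinct minimal vertex covers $(C_i, C_j)$ of $G$ such that $\mathcal{G}_J^{(C_i,C_j)}$ is disconnected. I would select such a pair $(C_1, C_2)$ for which $|C_1 \cup C_2|$ (equivalently $|C_1 \triangle C_2|$, since $|C_1|=|C_2|=\alpha_0(G)$) is minimum. Setting $A := C_1 \cap C_2$, $B_i := C_i \setminus C_{3-i}$ for $i=1,2$, and $G' := G[C_1 \triangle C_2]$, Lemma~\ref{Covers-sym-dif}(b) immediately yields that $G'$ is bipartite with bipartition $(B_1, B_2)$, carries a perfect matching, and satisfies $|B_1| = |B_2| = \alpha_0(G')$; in particular $G'$ has no isolated vertices.

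To establish unmixedness I would argue by contradiction, supposing there is a minimal vertex cover $C^*$ of $G'$ with $|C^*| > \alpha_0(G')$. Since $B_1, B_2$ are vertex covers of $G'$ of size $\alpha_0(G') < |C^*|$, minimality of $C^*$ forces $B_1 \not\subset C^*$ and $B_2 \not\subset C^*$. By Lemma~\ref{Covers-sym-dif}(a), $C^* \cup A$ is a vertex cover of $G$; choose a minimal vertex cover $D$ of $G$ contained in $C^* \cup A$. Then $D \cap V(G')$ is a vertex cover of $G'$ contained in the minimal vertex cover $C^*$, so $D \cap V(G') = C^*$, and therefore $B_1, B_2 \not\subset D$. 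Together with $A \subset C_1 \cap C_2$, this yields the symmetric strict containments $D \cup C_1 \subsetneq C_1 \cup C_2 \supsetneq D \cup C_2$. Since $D \subset C_1 \cup C_2$, the cover $D$ is a vertex of $\mathcal{G}_J^{(C_1,C_2)}$; as $C_1$ and $C_2$ lie in different components of $\mathcal{G}_J^{(C_1,C_2)}$, $D$ is in a different component from some $C_i$, and then $\mathcal{G}_J^{(C_i, D)} \subset \mathcal{G}_J^{(C_1,C_2)}$ remains disconnected while $|C_i \cup D| < |C_1 \cup C_2|$, contradicting the minimality choice. This is the most delicate step: the double strict containment $D \cup C_1, D \cup C_2 \subsetneq C_1 \cup C_2$ removes the dependence on which component of $\mathcal{G}_J^{(C_1,C_2)}$ contains $D$, and is precisely what makes the minimization argument succeed.

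For the failure of Cohen--Macaulayness I would argue again by contradiction. If $G'$ were Cohen--Macaulay, then by Eagon--Reiner (Theorem~\ref{eagon-reiner-terai}(a)) $I_c(G')$ would have a linear resolution, hence a linear presentation. Applying Proposition~\ref{Bigdeli-Herzog-j} to $G'$ with the pair $(B_1, B_2)$---every minimal vertex cover of $G'$ is automatically contained in $V(G') = B_1 \cup B_2$---produces a path $B_1 = D_1, D_2, \ldots, D_n = B_2$ in $\mathcal{G}_{I_c(G')}$. Each $D_i \cup A$ is a vertex cover of $G$ of size $\alpha_0(G') + |A| = \alpha_0(G)$, hence a minimal vertex cover of $G$ contained in $V(G') \cup A = C_1 \cup C_2$, and adjacency in $\mathcal{G}_{I_c(G')}$ lifts to adjacency in $\mathcal{G}_J$ because the pairwise intersections gain exactly $|A|$ vertices. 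The lifted path from $C_1 = B_1 \cup A$ to $C_2 = B_2 \cup A$ therefore lives in $\mathcal{G}_J^{(C_1,C_2)}$, contradicting the initial disconnectedness.
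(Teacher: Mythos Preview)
Your proof is correct and takes a genuinely different route from the paper's in both halves of the argument, while using the same extremal choice of $(C_1,C_2)$.

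For unmixedness, the paper shows that a fixed perfect matching $P$ of $G'$ has the {\rm (P)} property: assuming some edge of $P$ fails it, a rather intricate combinatorial partition of $P$ into three parts is used to build a minimal vertex cover $C_3\subset C_1\cup C_2$ with $|C_i\cap C_3|>|A|$ for $i=1,2$, and then the maximality of $|A|$ yields paths $C_1\!-\!C_3$ and $C_2\!-\!C_3$ inside $\mathcal{G}_J^{(C_1,C_2)}$, a contradiction. You bypass the {\rm (P)} property entirely: starting from any oversized minimal vertex cover $C^*$ of $G'$, you push it down to $D\in E(G^\vee)$ with $D\cap V(G')=C^*$, observe that $B_1,B_2\not\subset D$ forces both $D\cup C_i\subsetneq C_1\cup C_2$, and finish with a single application of minimality. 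This is shorter and more conceptual; the paper's version has the side benefit of yielding the {\rm (P)} property of $P$ explicitly, but that is not needed elsewhere.

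For the failure of Cohen--Macaulayness, the paper invokes the existence of a free vertex in a Cohen--Macaulay K\"onig graph (Proposition~\ref{CM-free-vertex}) to produce a single edge $\{C_1,C_4\}$ in $\mathcal{G}_J^{(C_1,C_2)}$ with $|C_4\cap C_2|>|A|$, then uses maximality once more. You instead use Eagon--Reiner directly: $I_c(G')$ linearly presented gives a full path $B_1=D_1,\ldots,D_n=B_2$ in $\mathcal{G}_{I_c(G')}$, and lifting each $D_i$ to $D_i\cup A\in E(G^\vee)$ produces the forbidden path in $\mathcal{G}_J^{(C_1,C_2)}$. Your argument avoids the structural lemma about free vertices at the cost of appealing to the linear presentation of $I_c(G')$ rather than just connectedness; since Eagon--Reiner is already invoked in the paper, this trade-off is harmless.
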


\begin{proof} 
By Proposition~\ref{Bigdeli-Herzog-j}(b), there are $C_1, C_2  \in E(G^\vee)$
such that there is no path in 
$\mathcal{G}_J^{(C_1,C_2)}$ connecting $C_1$ and $C_2$, where
$J=I_c(G)$. We choose $C_1$
and $C_2$ such that $\vert C_1  \cap C_2 \vert$ is maximum 
with respect to this property.
One has $\alpha_0(G)=\vert C_1\vert=\vert C_2\vert$,
since $G$ is an unmixed graph. 
By Lemma~\ref{Covers-sym-dif}(b), $G'=G[C_1 \triangle C_2]$  is bipartite with 
a perfect matching $P$ and bipartition $(B_1,B_2)$ such that
 $\vert B_1 \vert=\vert B_2 \vert=\vert P \vert=\alpha_0(G')$,
where $B_1=C_1 \setminus C_2$ and $B_2=C_2 \setminus C_1$. 

Now we prove that $P$ has the {\rm (P)}  property in $G'$.
By contradiction, suppose there is an edge $\{t,t'\}$ in $P$ and $t_1,t_2 \in V(G')$ 
such that  $\{t, t_1 \}, \{t', t_2\} \in E(G')$  and $\{t_1, t_2\} \notin E(G')$. 
Since $G'$ is bipartite, we have $\vert  \{t, t', t_1, t_2 \} \vert = 4$.
Also, there is a maximal stable set $U'$ of $G'$ such that $\{t_1,t_2\} \subset U'$. Thus, 
$C'=V(G') \setminus U'$ is a minimal vertex cover of $G'$, 
$t_1, t_2 \notin C'$, and $\{t,t'\}\subset C'$.  
We may assume $t  \in B_1$ and  $t' \in B_2$, since $(B_1, B_2)$ is a bipartition of $G'$. 
Then, $t_1 \in B_2$ and $t_2 \in B_1$. We consider the following subsets of $P$:
$$
P_1= \{\epsilon \in P \mid   \mbox{ } \epsilon  \subset C' \},  \mbox{ } P_2= \{\epsilon 
\in P \mid   \mbox{ } \epsilon  \cap C'  \subset B_1\} \mbox{ and }  
P_3= \{\epsilon  \in P \mid   \mbox{ } \epsilon  \cap C'  \subset B_2\}, 
$$
then $P_1, P_2, P_3$ are non-empty (see the discussion below) and form a partition of $P$, 
since $C'$ is a vertex cover of $G'$. 
We also consider the sets 
$$
B^i_j=V(P_i) \cap B_j, \mbox{ for } i=1,2,3 \mbox{ and } j=1,2.
$$  
\quad The $B^i_j$'s are mutually disjoint, since $P_1, P_2,
P_3$ is a partition of $P$ and $B_1 \cap B_2=\emptyset$.  Note that 
$\vert B_1^i\vert= \vert P_i\vert = \vert B_2^i\vert$ 
for $i=1,2,3$. Since $P$ is a perfect matching of $G'$, there are
$\{t_1,t'_1\}, \{t_2,t'_2\} \in P$. Consequently,  
$t'_1 \in B_1 \cap C'$ and $t'_2 \in B_2 \cap C'$, since $t_1 \in
B_2$,  $t_2 \in B_1$ and $t_1,t_2 \notin C'$. 
This implies $t'_1 \in B^2_1$, $t'_2 \in B^3_2$, $\{t_1,t_1'\}\in P_2$
and $\{t_2,t_2'\}\in P_3$.
Next we prove the equality 
\begin{equation}\label{jul6-24}
C'=B_1^1 \cup B_1^2 \cup B_2^1 \cup B_2^3.
\end{equation}
\quad The
inclusion ``$\supset$'' is clear by construction of the $P_i$'s. 
To show the inclusion ``$\subset$''
take $t_i\in C'$. There is $t_i'$ such that $\{t_i,t_i'\}\in P$. If
$t_i'\in C'$, then $\{t_i,t_i'\}\in P_1$ and $t_i\in B_1^1$ or $t_i\in
B_2^1$. If
$t_i'\notin C'$ and $\{t_i,t_i'\}\in P_2$, then $t_i\in B_1^2$. If
$t_i'\notin C'$ and $\{t_i,t_i'\}\in P_3$, then $t_i\in B_2^3$. This
proves the equality. Now, by Lemma~\ref{Covers-sym-dif}(a), we have
$C' \cup A$ is a vertex cover of $G$ where $A:=C_1 \cap C_2$.
Thus, there is  $C_3  \in E(G^\vee)$ such that $C_3 \subset C' \cup A$. 
We claim that $C' \subset C_3$. We argue by contradiction assuming 
there is $t_i \in C' \setminus C_3$. As $C'$ is a minimal vertex cover of $G'$, 
there is $\epsilon \in E(G')$ such that $\epsilon \cap C' = \{t_i\}$. Since
$\epsilon \in E(G')$, we have $\epsilon \cap A= \emptyset$. So, 
$\epsilon \cap C_3 \subset \epsilon \cap (C' \cup A) \subset \epsilon \cap C'=\{t_i\}$.
But $t_i \notin C_3$, then $\epsilon \cap C_3 =\emptyset$, a contradiction
since $C_3  \in E(G^\vee)$.
From the claim, we can write $C_3=A_1 \cup C'$ for some $A_1 \subset A$. 
Then, by Eq.~\eqref{jul6-24} and noticing that $B_1=B^1_1 \cup  B^2_1
\cup  B^3_1$, we obtain that $C_3$ and $C_1$ can be partitioned as 
\begin{align}
&C_3=A_1\cup C'=A_1 \cup B_1^1 \cup B_1^2 \cup B_2^1 \cup
B_2^3,\label{jul6-24-1}\\
&C_1=A \cup B_1=A_1 \cup A_2 \cup B^1_1 \cup  B^2_1 \cup  B^3_1,
\mbox{ where } A_2=A \setminus A_1,\label{jul6-24-2} 
\end{align}
respectively. Hence, noticing 
that $\vert C_1 \vert=\vert C_3 \vert$ and $ \vert B^3_1 \vert  = \vert P_3 \vert=
\vert B^3_2 \vert$, we obtain $\vert A_2 \vert  = \vert B^1_2 \vert$. 
From Eqs.~\eqref{jul6-24-1} and \eqref{jul6-24-2}, we have $C_1  \cap
C_3 = A_1 \cup B^1_1  \cup B^2_1$. Similarly, we obtain the equality 
$C_2 \cap C_3 = A_1 \cup B^1_2  \cup B^3_2$, since  $C_2=A \cup
 B_2=A_1 \cup A_2 \cup B^1_2 \cup  B^2_2 \cup  B^3_2$.   
Recall $\vert B_1^1 \vert=\vert P_1  \vert= \vert B_2^1 \vert$
and $ \vert B^1_2 \vert= \vert A_2 \vert$, then
$\vert C_1  \cap C_3  \vert = \vert A_1 \vert +\vert B^1_1 \vert
+ \vert B^2_1\vert=\vert A \vert+ \vert B^2_1 \vert$, 
since $A=A_1 \cup A_2$. Similarly, we obtain 
that $\vert C_2  \cap C_3 \vert = \vert A \vert+ \vert B^3_2 \vert$,
since  $\vert B^1_2 \vert= \vert A_2 \vert$ and $A=A_1\cup A_2$. 
Then,  
$$
\vert C_i  \cap C_3  \vert  > \vert A \vert \mbox{ for } i=1,2,
\mbox{ since } t'_1 \in B_1^2 \mbox{ and } t'_2  \in B_2^3. 
$$
\quad But $C_1$, $C_2$ were chosen so that $\vert C_1 \cap C_2 \vert=
\vert A \vert$ 
is maximal with respect to the property that
there is no path in $\mathcal{G}_J^{(C_1,C_2)}$ connecting $C_1$ and $C_2$.
Consequently, there is a path  $\mathcal{P}_i$ between $C_i$ and $C_3$ in
$\mathcal{G}_J^{(C_i,C_3)}$ for $i=1, 2$. Recall that $A_1\subset
A=C_1\cap C_2$ and $C'\subset B_1\cup B_2\subset C_1\cup C_2$. Thus, 
$C_3 \subset C_1 \cup C_2$, and one has  
$$
\mathcal{P}_i \subset \mathcal{G}_J^{(C_i,C_3)}  \subset \mathcal{G}_J^{(C_1,C_2)}  \mbox{ for } i=1,2.
$$
\quad This implies, $ \mathcal{P}_1  \cup  \mathcal{P}_2$ is a path between $C_1$ and $C_2$ in
$ \mathcal{G}_J^{(C_1, C_2)} $, a contradiction. Therefore, $P$ has the  {\rm (P)} property.
On the other hand,  by K\"onig's theorem (Theorem~\ref{konig-theorem}), 
$G'$ is a K\"onig graph, since $G'$ is bipartite. 
Also, $G'$ does not have isolated vertices, since $P$ is a perfect
matching of $G'$.
Hence, by Proposition~\ref{unmixed-Koning-CM}(a), $G'$ is an unmixed graph.

Now, suppose the graph $G'$ is Cohen-Macaulay.
By Proposition~\ref{CM-free-vertex},
there is $\overline{t} \in V(G')$ with $\deg_{G'}(\overline{t})=1$. We can assume $\overline{t} \in B_1$. 
Since $P$ is a perfect matching, there is $\{\overline{t},\overline{t}'\} \in P$
with $\overline{t}' \in B_2$. Consequently, $ \overline{t} \neq \overline{t}'$ and
$C_4=(C_1  \setminus \{\overline{t}\}) \cup \{\overline{t}'\}  \in E(G^\vee)$, 
since $N_{G'}(\overline{t})=\{\overline{t}'\} \subset C_4$ and 
$\vert C_4 \vert=\vert C_1 \vert$. Also, 
$C_4 \subset C_1 \cup C_2$ since $\overline{t}' \in B_2 \subset C_2$. 
Thus, $ \overline{\epsilon}=\{C_1,C_4\} \in E(\mathcal{G}_J^{(C_1, C_2)})$.
Furthermore, $C_4 \cap C_2 = A  \cup \{\overline{t}'\}$ and
$\overline{t}'\notin A$, since $\overline{t} \in B_1=C_1 \setminus C_2$ and
$\overline{t}'\in B_2=C_2\setminus A$. Then,
$\vert C_4 \cap C_2 \vert  >  \vert A  \vert = \vert C_1 \cap C_2 \vert $.
This implies, because of the way we chose $C_1$ and $C_2$, that  
there is a path $\mathcal{P}$ between $C_2$ and $C_4$ in
$\mathcal{G}_J^{(C_2, C_4)}$. So, $\mathcal{P} \in \mathcal{G}_J^{(C_1, C_2)}$, 
since $C_4 \subset C_1 \cup C_2$.
Hence, $\mathcal{P} \cup \{ \overline{\epsilon}\}$  is a path between $C_1$ and $C_2$
in $\mathcal{G}_J^{(C_1, C_2)}$, a contradiction.
Therefore, $G'$ is not Cohen-Macaulay.
\end{proof}

\begin{theorem}\label{4-cycles-connected}
If $G$ is an unmixed graph without induced $4$-cycles, then
$I_c(G)$ is linearly presented and ${\rm v}(I_c(G))=\alpha_0(G)-1$.
\end{theorem}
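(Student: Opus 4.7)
The plan is to argue by contraposition, using the structural results already established in the paper. Suppose $I_c(G)$ is not linearly presented. By Theorem~\ref{noLinear-noCM}, we can find distinct minimal vertex covers $C_1, C_2 \in E(G^\vee)$ such that the induced subgraph $G' := G[C_1 \triangle C_2]$ is an unmixed bipartite graph without isolated vertices which is not Cohen--Macaulay. The job is then to extract from this failure of Cohen--Macaulayness an induced $4$-cycle of $G$, which would contradict the hypothesis.

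Next I would apply Theorem~\ref{Konig-connectd} to $G'$. Since $G'$ is bipartite it is K\"onig by Theorem~\ref{konig-theorem}, and it is unmixed without isolated vertices. The equivalence (a)$\Leftrightarrow$(e) of Theorem~\ref{Konig-connectd} then tells us, upon negation, that $G'$ contains an induced $4$-cycle $C^4 = \{t_1, t_2, t_3, t_4, t_1\}$ in which every edge has the {\rm (P)} property relative to $G'$. (Alternatively one could invoke Lemma~\ref{Property-duplicated} directly, since the failure of (e) for $G'$ produces duplicated vertices in $G'$ lying on a $4$-cycle whose edges have the {\rm (P)} property.)

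Finally I would transfer this induced $4$-cycle from $G'$ up to $G$. Because $G'$ is itself an induced subgraph of $G$ and $V(C^4) \subset V(G')$, we have the identification
\[
G[V(C^4)] \;=\; G'[V(C^4)] \;=\; C^4,
\]
so $C^4$ is an induced $4$-cycle of $G$. This contradicts the standing hypothesis that $G$ has no induced $4$-cycles, completing the proof.

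The main subtlety — and what I would check carefully — is precisely this last transfer step: one must be certain that the edge set of $G'$ on the vertices of $C^4$ coincides with the edge set of $G$ there, which is immediate from $G'$ being an \emph{induced} subgraph of $G$, together with the fact (from Lemma~\ref{Property-duplicated}) that $C^4$ is already induced inside $G'$. No routine computation is needed beyond invoking Theorems~\ref{noLinear-noCM} and \ref{Konig-connectd}; the entire argument is a short chain of contrapositives built on previously established equivalences.
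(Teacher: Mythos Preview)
Your proposal is correct and follows essentially the same approach as the paper's proof: both argue by contradiction, invoke Theorem~\ref{noLinear-noCM} to produce the unmixed bipartite non-Cohen--Macaulay induced subgraph $G'=G[C_1\triangle C_2]$, observe that $G'$ is K\"onig, and then use the equivalence (a)$\Leftrightarrow$(e) of Theorem~\ref{Konig-connectd} to reach a contradiction. The only cosmetic difference is the direction in which the equivalence is applied: the paper notes that since $G$ (and hence the induced subgraph $G'$) has no induced $4$-cycles, condition (e) holds vacuously for $G'$, forcing $G'$ to be Cohen--Macaulay; you instead negate (e) to extract an induced $4$-cycle in $G'$ and then transfer it to $G$.
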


\begin{proof} 
By contradiction suppose that $I_c(G)$ is not linearly presented. 
Then, by Theorem~\ref{noLinear-noCM}, there are $C_1, C_2  \in E(G^\vee)$
such that  $G'=G[C_1 \triangle C_2]$ is not Cohen-Macaulay and $G'$
is an unmixed bipartite graph without isolated vertices. 
So, by K\"onig's theorem (Theorem~\ref{konig-theorem}), 
$G'$ is a K\"onig graph. 
Hence, by Theorem~\ref{Konig-connectd}(a)-(e), 
$G'$ is Cohen-Macaulay,
since $G$ does not have induced $4$-cycles. 
This is a contradiction. 
Therefore, $I_c(G)$ is linearly presented. Then, 
by Theorem~\ref{eclipse-apr8-24}(c), the v-number of $I_c(G)$ is
equal to $\alpha_0(G)-1$. 
\end{proof}

\begin{theorem}\label{not-linear-presented}
Let $G$ be an unmixed graph such that $I_c(G)$ is linearly presented.
If  $G[C_1 \triangle C_2]$ is unmixed and
not Cohen-Macaulay, where $C_1$ and $C_2$
are distinct minimal vertex covers of $G$, then there is $C \in E(G^\vee)$
such that $C_1 \cap C_2  \not\subset C \subset C_1 \cup C_2$.
\end{theorem}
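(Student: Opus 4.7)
The plan is to argue by contradiction. Set $A = C_1 \cap C_2$ and $B_i = C_i \setminus A$ for $i=1,2$, and suppose that every $C \in E(G^\vee)$ with $C \subset C_1 \cup C_2$ satisfies $A \subset C$. The goal is to deduce that $I_c(G')$ is linearly presented, where $G' := G[C_1 \triangle C_2]$, and then invoke Theorem~\ref{Konig-connectd} to force $G'$ to be Cohen--Macaulay, contradicting the hypothesis.

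By Lemma~\ref{Covers-sym-dif}, $G'$ is bipartite with bipartition $(B_1,B_2)$, has a perfect matching (hence no isolated vertices), and $\alpha_0(G') = |B_1| = |B_2| = \alpha_0(G) - |A|$. Two routine correspondences follow: if $D \in E((G')^\vee)$, then $\widehat{D} := D \cup A$ is a vertex cover of $G$ by Lemma~\ref{Covers-sym-dif}(a) of cardinality $\alpha_0(G') + |A| = \alpha_0(G)$, hence $\widehat{D} \in E(G^\vee)$; conversely, if $E \in E(G^\vee)$ with $A \subset E \subset C_1 \cup C_2$, then $E \setminus A$ is a vertex cover of $G'$ (any edge of $G'$ is disjoint from $A$) with $|E \setminus A| = \alpha_0(G')$, so $E \setminus A \in E((G')^\vee)$.

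To verify the criterion of Proposition~\ref{Bigdeli-Herzog-j} for $G'$, pick arbitrary $D_1,D_2 \in E((G')^\vee)$ and set $\widehat{D}_i = D_i \cup A \in E(G^\vee)$. Since $\widehat{D}_i \subset C_1 \cup C_2$, linear presentation of $I_c(G)$ supplies a path $\widehat{D}_1 = E_0, E_1, \ldots, E_m = \widehat{D}_2$ in $\mathcal{G}_J^{(\widehat{D}_1,\widehat{D}_2)}$, and thus each $E_j \subset \widehat{D}_1 \cup \widehat{D}_2 = D_1 \cup D_2 \cup A \subset C_1 \cup C_2$. The standing assumption forces $A \subset E_j$, so $E_j \setminus A \in E((G')^\vee)$ and $E_j \setminus A \subset D_1 \cup D_2$. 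The key step is checking adjacency of $E_j \setminus A$ and $E_{j+1} \setminus A$ in $\mathcal{G}_{I_c(G')}$: by Theorem~\ref{eclipse-apr8-24}(a) applied to $G$, one has $E_{j+1} = (E_j \setminus \{t_k\}) \cup \{t_\ell\}$ for some $\{t_k,t_\ell\} \in E(G)$, and since $A \subset E_j \cap E_{j+1}$ neither $t_k$ nor $t_\ell$ lies in $A$, so both belong to $V(G') = B_1 \cup B_2$ and $\{t_k,t_\ell\} \in E(G')$; Theorem~\ref{eclipse-apr8-24}(a) applied to the unmixed graph $G'$ then produces the edge $\{E_j \setminus A,\, E_{j+1} \setminus A\}$ in $\mathcal{G}_{I_c(G')}$. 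Hence $D_1$ and $D_2$ are joined by a path in $\mathcal{G}_{I_c(G')}^{(D_1,D_2)}$, and Proposition~\ref{Bigdeli-Herzog-j} yields that $I_c(G')$ is linearly presented. Since $G'$ is unmixed, K\"onig (being bipartite, by Theorem~\ref{konig-theorem}), and without isolated vertices, Theorem~\ref{Konig-connectd} makes $G'$ Cohen--Macaulay, the desired contradiction. The main technical subtlety is this last verification that each swap in $G$ actually takes place inside $V(G')$, which is precisely where the contradiction hypothesis ``$A \subset E_j$ for all $j$'' is used.
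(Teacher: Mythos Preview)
Your proof is correct and takes a genuinely different route from the paper's.

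The paper works only with the single path in $\mathcal{G}_J^{(C_1,C_2)}$ joining $C_1$ to $C_2$ and, assuming $A\subset\bar{C}_i$ for every vertex $\bar{C}_i$ on that path, invokes the concrete structure of a non-Cohen--Macaulay unmixed K\"onig graph: by Proposition~\ref{unmixed-Koning-CM}(b) there is a $4$-cycle with two edges in the K\"onig-type perfect matching $P$, and the paper shows that along the path each $\bar{C}_i$ must pick up exactly one vertex from each edge of $P$, so the $4$-cycle forces some step of the path to jump by two vertices at once, contradicting adjacency in $\mathcal{G}_J$. Your argument is more structural: you set up a bijection between $E((G')^\vee)$ and the minimal vertex covers of $G$ contained in $C_1\cup C_2$ (which, under the contradiction hypothesis, all contain $A$), check that this bijection carries paths in $\mathcal{G}_{I_c(G)}^{(\widehat{D}_1,\widehat{D}_2)}$ to paths in $\mathcal{G}_{I_c(G')}^{(D_1,D_2)}$, and thereby transfer linear presentation from $I_c(G)$ to $I_c(G')$ wholesale, finishing with Theorem~\ref{Konig-connectd}. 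Your route is cleaner and avoids the ad~hoc $4$-cycle computation; the paper's route is more self-contained in that it does not call back to the full equivalence of Theorem~\ref{Konig-connectd}.
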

\begin{proof} 
Assume $G':=G[C_1 \triangle C_2]$ is unmixed and not Cohen-Macaulay, 
where $C_1, C_2  \in E(G^\vee)$ and $C_1  \neq C_2$.
We will show there is $C \in E(G^\vee)$ such that
$A \not\subset C \subset C_1 \cup C_2$, where $A:= C_1 \cap C_2$. 
By hypothesis  $J:=I_c(G)$ is linearly presented. Thus, 
by (c) in Proposition~\ref{Bigdeli-Herzog-j},  
there is a path $ \mathcal{P}=\{\bar{C_1},\bar{C_2}, \ldots,
\bar{C_k}\}$
in $\mathcal{G}_J^{(C_1, C_2)}$ with $\bar{C_1}=C_1$, $\bar{C_k}=C_2$,
and $\bar{C_i} \subset C_1 \cup C_2$ for $i=1, \ldots k$. 
Furthermore, by Lemma~\ref{Covers-sym-dif}(b), $G'$ is a bipartite graph 
with a perfect matching  and bipartition $(B_1,B_2)$ such that $\vert B_1 \vert = \vert B_2 \vert$,
where $B_1:=C_1 \setminus C_2$ and $B_2:=C_2 \setminus C_1$. 
Since $G'$ has a perfect matching, $G'$
does not have isolated vertices. Also, by  K\"onig's theorem (Theorem~\ref{konig-theorem}), 
$G'$ is a K\"onig graph. Hence,
by Proposition~\ref{unmixed-Koning-CM}(a), $G'$ has a perfect matching 
$P$ of  K\"onig type with the {\rm (P)}  property,
since $G'$ is unmixed.
Then,  $\vert V(G') \vert = 2 \vert P \vert$ implies   
$\vert B_1 \vert = \vert B_2 \vert= \vert P \vert$, since    
$V(G')=B_1 \cup B_2$ and $B_1 \cap B_2=\emptyset$. 
This implies, $\vert \bar{C_i} \vert=\vert C_1 \vert=\vert A \vert +\vert P \vert$,
since $G$ is unmixed, $C_1=A \cup B_1$ and $A \cap B_1= \emptyset$.  

We claim that $A\not\subset\bar{C_i}$ for some $i$. Arguing by
contradiction, we assume $A \subset  \bar{C_i}$ for each $i \in \{1,
\ldots, k\}$. Letting $\bar{C'_i}:= \bar{C_i} \setminus A$, then 
$\vert  \bar{C'_i} \vert = \vert  \bar{C_i} \vert -  \vert  A  \vert = \vert P  \vert$.
We take $\epsilon \in P$. Since $ \bar{C_i}$ is a vertex cover, we have that
$\emptyset \neq  \bar{C_i} \cap \epsilon = \bar{C'_i} \cap \epsilon$,
because $A \cap \epsilon  \subset A \cap V(G') =\emptyset$.
Furthermore, $P$ is a matching, then 
$$
 \vert  \bar{C'_i} \vert \geq  \vert  \bar{C'_i} \cap V(P) \vert =  \sum_{\epsilon \in P} \vert \bar{C'_i} \cap \epsilon   \vert \geq  \vert P  \vert.
$$
But $\vert  \bar{C'_i} \vert = \vert P  \vert$, then $1=\vert \bar{C'_i} \cap \epsilon  \vert = \vert \bar{C_i} \cap \epsilon   \vert$
for each $\epsilon \in P$.
On the other hand $G'$ is not Cohen-Macaulay, then by
Proposition~\ref{unmixed-Koning-CM}(b), there is a $4$-cycle $(t_1, t'_1, t_2, t'_2, t_1)$
such that $\epsilon_1=\{t_1,t'_1\}, \epsilon_2=\{t_2,t'_2\} \in P$.  
Without loss of generality, we can assume $t_1,t_2 \in B_1$ and $t'_1,t'_2 \in B_2$, since $(B_1,B_2)$ is a bipartition of $G'$.
By a previous argument, $\vert \bar{C'_i} \cap \epsilon_1  \vert=1= \vert \bar{C'_i} \cap   \epsilon_2  \vert$, 
since  $\epsilon_1,  \epsilon_2 \in P$. So,  if $ \bar{C'_i} \cap \epsilon_1 =\{t_1\}$, then 
$\bar{C'_i} \cap \epsilon_2 =\{t_2\}$ since $\{t'_1,t_2\} \in E(G')$. Similarly, if $ \bar{C'_i} \cap \epsilon_1 =\{t'_1\}$, then 
$\bar{C'_i} \cap \epsilon_2 =\{t'_2\}$ since $\{t_1,t'_2\} \in E(G')$.
This  implies 
$$
\bar{C_i} \cap (\epsilon_1 \cup \epsilon_2)=\{t_1,t_2\} \mbox{ or }
\bar{C_i} \cap (\epsilon_1 \cup \epsilon_2)=\{t'_1,t'_2\},
$$
since $ \bar{C_i} \cap \epsilon = \bar{C'_i} \cap \epsilon$. In particular we have $C_1 \cap (\epsilon_1 \cup \epsilon_2)=\{t_1,t_2\}$ and 
$C_2 \cap (\epsilon_1 \cup \epsilon_2)=\{t'_1,t'_2\}$, since  $t_1,t_2 \in B_1$ and $t'_1,t'_2 \in B_2$.
Now, we let $j=\min\{i \mbox{ } \vert  \mbox{ }  \bar{C_i} \cap (\epsilon_1 \cup \epsilon_2)=\{t'_1,t'_2\}\}$, then
$j>1$ and $\{t'_1,t'_2\}\ \subset \bar{C_j} \setminus \bar{C}_{j-1}$.
This is a contradiction since 
$ \{ \bar{C_j},  \bar{C}_{j-1} \} \in E(\mathcal{G}_J^{(C_1, C_2)})$.
Therefore, there is $ \bar{C}_{i'}$ such that $A \not\subset \bar{C}_{i'}$. 
Hence, to finish the proof, we can set $C:=\bar{C}_{i'}$, since $A
\not\subset \bar{C}_{i'}  \subset C_1 \cup C_2$.
\end{proof}

\begin{lemma} \label{3-5-cycles-linpresented1}
 Let $G$ be a graph without $3$- and $5$-cycles.  If $G'$ is an induced subgraph of $G$
 with a perfect matching $P$ and $\overline{t} \in V(G) \setminus
 V(G')$, then, the set
 $$B=\{ t \in V(G) \mbox{ }  \vert \mbox{ there is } t' \in
 N_G(\overline{t}) \mbox{ such that }  \{t, t' \} \in P\}\ \mbox{ is
 a stable set.}
$$
\end{lemma}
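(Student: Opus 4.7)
\medskip
\noindent\textbf{Proof proposal.} The plan is to argue by contradiction: suppose there exist distinct vertices $t_1,t_2\in B$ with $\{t_1,t_2\}\in E(G)$, and produce either a $3$-cycle or a $5$-cycle in $G$, contrary to hypothesis. By the definition of $B$, we can pick $t'_1,t'_2\in N_G(\overline{t})$ with $\{t_1,t'_1\},\{t_2,t'_2\}\in P$. The first step is to rule out $t'_1=t'_2$: since $P$ is a matching and $t_1\neq t_2$, no vertex can be matched to both of them, so this case is immediately impossible.

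Next, I would build the closed walk
\[
\overline{t}\ \text{---}\ t'_1\ \text{---}\ t_1\ \text{---}\ t_2\ \text{---}\ t'_2\ \text{---}\ \overline{t},
\]
where consecutive adjacencies come respectively from $t'_1,t'_2\in N_G(\overline{t})$, from the two edges of $P$, and from the assumed edge $\{t_1,t_2\}$. The key observation is that $\overline{t}\notin V(G')$ while $t_1,t_2,t'_1,t'_2\in V(G')$ (they are endpoints of edges of $P$), so $\overline{t}$ is distinct from the other four, and matching-endpoint distinctness gives $t_i\neq t'_i$ for $i=1,2$.

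The crux of the argument is to show that the remaining possible coincidences or chords all force a $3$-cycle. If $t'_1=t_2$, then (since $P$ is a matching and $t_2$ lies in both $\{t_1,t'_1\}$ and $\{t_2,t'_2\}$) we must also have $t'_2=t_1$; but then both $t_1$ and $t_2$ lie in $N_G(\overline{t})$, and together with $\{t_1,t_2\}\in E(G)$ this yields a triangle on $\{\overline{t},t_1,t_2\}$, contradicting the absence of $3$-cycles. The symmetric case $t'_2=t_1$ is treated the same way. Hence the five vertices are pairwise distinct and the walk above is an honest $5$-cycle.

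Finally, I would dispose of the possibility that this $5$-cycle has a chord (which is essentially automatic, but worth noting since the hypothesis is ``no $3$- and $5$-cycles''): each potential chord $\{\overline{t},t_1\}$, $\{\overline{t},t_2\}$, $\{t'_1,t'_2\}$, $\{t'_1,t_2\}$, or $\{t_1,t'_2\}$ closes up a triangle with two already-present edges of the walk, again violating the no-$3$-cycle assumption. Thus the walk is a $5$-cycle of $G$, contradicting the no-$5$-cycle assumption. The main (and essentially only) obstacle is the careful bookkeeping of the degenerate cases in which the five candidate vertices collapse, but each collapse is seen to produce a forbidden triangle, so the argument closes cleanly.
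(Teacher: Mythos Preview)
Your proposal is correct and follows essentially the same approach as the paper: argue by contradiction, pick the matching edges $\{t_1,t'_1\},\{t_2,t'_2\}\in P$, show that any coincidence among $t_1,t'_1,t_2,t'_2$ forces a triangle through $\overline{t}$, and otherwise obtain the $5$-cycle $(\overline{t},t'_1,t_1,t_2,t'_2,\overline{t})$. The paper's write-up is slightly more economical---it shows $t_1\neq t'_2$ and then deduces $\{t_1,t'_1\}\cap\{t_2,t'_2\}=\emptyset$ directly from the matching property---and your final chord analysis is unnecessary, since the hypothesis forbids all $3$- and $5$-cycles, not only induced ones.
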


\begin{proof} 
By contradiction assume there is $\{t_1,t_2\} \in E(G)$ such that $t_1, t_2 \in B$.
Then, $t_1 \neq t_2$ and there are $t'_1,t'_2 \in N_G(\overline{t})$ such that 
$\{t_1,t'_1\}, \{t_2,t'_2\} \in P$. If $t_1=t'_2$, then $\{\overline{t},t_1\} \in E(G)$ and
$(\overline{t}, t'_1, t_1, \overline{t} )$ is a $3$-cycle, a contradiction, since $G$ does not have
$3$-cycles. Hence $t_1 \neq t'_2$. Consequently, $\{t_1, t'_1\} \neq \{t_2, t'_2\} $,
since $t_1 \neq t_2$ and $t_1 \neq t'_2$. Thus, $\{t_1, t'_1\} \cap \{t_2, t'_2\}=\emptyset$,
since $P$ is a perfect matching. Then, $(\overline{t}, t'_1, t_1,
t_2, t'_2, \overline{t})$ is a $5$-cycle, a contradiction, since $G$
does not have $5$-cycles. 
Therefore, $B$ is a stable
set.
\end{proof} 

\begin{theorem} \label{3-5-cycles-linpresented}
 Let $G$ be an unmixed graph without $3$- and $5$-cycles. The
 following conditions are
 equivalent.
\begin{enumerate}
\item[\rm(a)] $I_c(G)$ is linearly presented.
\item[\rm(b)] If $G'=G[C_1 \triangle C_2]$ is an unmixed graph, where
$C_1, C_2 \in E(G^\vee)$, $C_1 \neq C_2$, then $G'$ is a
Cohen--Macaulay graph.
\end{enumerate}
\end{theorem}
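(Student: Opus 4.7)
The plan is to prove the two implications separately, with (b)$\Rightarrow$(a) being the easy direction. For (b)$\Rightarrow$(a), I argue by contrapositive: if $I_c(G)$ is not linearly presented, then Theorem~\ref{noLinear-noCM} produces distinct $C_1,C_2\in E(G^\vee)$ such that $G[C_1\triangle C_2]$ is an unmixed bipartite graph without isolated vertices which fails to be Cohen--Macaulay, directly contradicting (b). The $3$- and $5$-cycle hypothesis plays no role in this direction.

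For (a)$\Rightarrow$(b), assume (a) and let $C_1,C_2\in E(G^\vee)$ be distinct with $G':=G[C_1\triangle C_2]$ unmixed; I argue by contradiction, supposing that $G'$ is not Cohen--Macaulay. Put $A:=C_1\cap C_2$ and $B_i:=C_i\setminus C_j$; by Lemma~\ref{Covers-sym-dif}(b), $G'$ is a bipartite graph with parts $(B_1,B_2)$ and has a perfect matching $P$ of K\"onig type, and by Proposition~\ref{unmixed-Koning-CM}(a), $P$ has the {\rm(P)} property in $G'$. Theorem~\ref{not-linear-presented} then produces a minimal vertex cover $C$ of $G$ with $A\not\subset C\subset C_1\cup C_2$; pick $\overline t\in A\setminus C$. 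Since $\overline t\notin C$, we have $N_G(\overline t)\subset C\subset C_1\cup C_2$, so no neighbor of $\overline t$ lies in $R:=V(G)\setminus(C_1\cup C_2)$, and the minimality of $C_1$ (resp.\ $C_2$) at $\overline t$ supplies $w_1\in N_G(\overline t)\cap B_2$ and $w_2\in N_G(\overline t)\cap B_1$.

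Next, I invoke Lemma~\ref{3-5-cycles-linpresented1} with the induced subgraph $G'$, its perfect matching $P$, and $\overline t\in V(G)\setminus V(G')$: the set $B:=\{t\in V(G):\exists\,t'\in N_G(\overline t),\ \{t,t'\}\in P\}$ is stable in $G$ and is contained in $V(G')$. Writing $\mu$ for the $P$-partner map, $\mu(w_1)\in B\cap B_1$ and $\mu(w_2)\in B\cap B_2$, so $B$ meets both parts of the bipartition; moreover the ``shadow'' $\overline B:=N_G(\overline t)\cap V(G')$ is stable in $G'$ (an edge inside $\overline B$ would close a triangle with $\overline t$, contradicting triangle-freeness), and $B\cap\overline B=\emptyset$. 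Extending $\overline B$ to a maximum stable set $\hat U$ of $G'$, which has size $\alpha_0(G')=|P|$ since $G'$ is unmixed, and setting $\hat D:=V(G')\setminus\hat U$, the $P$-pairing forces $B\subset\hat D$; by Lemma~\ref{Covers-sym-dif}(a) and a cardinality count, $\hat C:=\hat D\cup A$ is a minimal vertex cover of $G$ with $A\subset\hat C\subset C_1\cup C_2$ and $\overline B\cap\hat C=\emptyset$.

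Finally, by Proposition~\ref{Bigdeli-Herzog-j}, the linear presentation of $I_c(G)$ supplies a path $\hat C=D_0,D_1,\ldots,D_n=C$ in $\mathcal{G}_{I_c(G)}^{(\hat C,C)}$, and by Theorem~\ref{eclipse-apr8-24}(a) each successive pair $D_i,D_{i+1}$ differs by a swap along an edge of $G$. Since $\overline B\subset C\setminus\hat C$ and $|\overline B|\ge 2$, the path must insert at least two distinct vertices of $\overline B$, each via a step $D_{i+1}=(D_i\setminus\{y_i\})\cup\{w\}$ with $w\in\overline B$ and $y_i\in N_G(w)\subset A\cup B_{3-j(w)}$ (where $j(w)\in\{1,2\}$ is the side of $w$). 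The plan for the contradiction is to use the stability of $B$, the $(P)$ property of $P$ in $G'$, and the triangle-freeness of $G$ to show that evicting $\overline t$ and inserting every $w\in\overline B$ force, at some intermediate step, two $B$-vertices to become adjacent through a swap, violating the stability of $B$. The step I expect to be the main obstacle is this final combinatorial tracking: it requires a careful case analysis of which $A$-vertices leave and which $V(G')$-vertices enter the cover at each exchange step while keeping the cover inside $C_1\cup C_2$, together with exploiting the adjacency propagation forced by (P) and the duplicated-vertex structure (Lemma~\ref{Property-duplicated}) that $G'$ inherits from being non-Cohen--Macaulay via Theorem~\ref{Konig-connectd}.
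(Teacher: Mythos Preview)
Your direction (b)$\Rightarrow$(a) is correct and matches the paper exactly.

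For (a)$\Rightarrow$(b), your setup is right up through the point where you pick $\overline t\in A\setminus C$, observe $N_G(\overline t)\subset C\subset C_1\cup C_2$, and invoke Lemma~\ref{3-5-cycles-linpresented1} to get that the $P$-partner set $B$ is stable. After that, however, you head in a direction that is both harder and, as you yourself flag, incomplete: you extend $\overline B=N_G(\overline t)\cap V(G')$, build $\hat C$, and then try to track a path in $\mathcal{G}_J^{(\hat C,C)}$ swap by swap. That case analysis is not carried out, and it is not clear it can be made to work; the vague plan of ``forcing two $B$-vertices to become adjacent through a swap'' does not obviously follow from stability of $B$ and the {\rm(P)} property.

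The paper avoids all of this by extending $B$ (the $P$-partners), not $\overline B$ (the neighbors), to a maximal stable set $L'$ of $G'$. The key point is then purely local: if $\overline t$ were adjacent to some $u\in L'$, the $P$-partner of $u$ would lie in $B\subset L'$, contradicting that $L'$ is stable. Hence $\overline t$ has no neighbor in $L'$. Combined with $N_G(\overline t)\cap D=\emptyset$ (where $D=V(G)\setminus(C_1\cup C_2)$) and the easy fact that every vertex of $L'\subset C_1\triangle C_2$ has its $G$-neighborhood inside $C_1\cup C_2$, one gets that $L:=L'\cup D\cup\{\overline t\}$ is a stable set of $G$. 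Since $G'$ is unmixed, $|L'|=|B_2|$, so $|L|=|B_2|+|D|+1=|V(G)\setminus C_1|+1$, which is strictly larger than a maximal stable set of the unmixed graph $G$---a contradiction. No path-tracking in $\mathcal{G}_J$ is needed at all; the linear presentation hypothesis is used only once, via Theorem~\ref{not-linear-presented}, to produce the cover $C$ with $A\not\subset C$.
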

\begin{proof} 
(b) $ \Rightarrow$ (a) This implication follows from Theorem~\ref{noLinear-noCM}.

(a) $ \Rightarrow$ (b). We argue by contradiction, assume there are  $C_1, C_2 \in E(G^\vee)$
such that $C_1 \neq C_2$, $G'=G[C_1 \triangle C_2]$ is unmixed and $G'$ is not Cohen--Macaulay. 
Then, by Lemma~\ref{Covers-sym-dif}(b), $G'$ is a bipartite graph with a perfect matching $P$. 
By Theorem~\ref{not-linear-presented} , there is $\bar{C}  \in E(G^\vee)$ such that
$A \not\subset \bar{C} \subset C_1 \cup C_2$, where $A:=C_1 \cap C_2$. 
So, there is $t_1 \in A \setminus \bar{C}$, then $t_1 \notin C_1 \triangle C_2=V(G')$ and $N_G(t_1) \subset \bar{C}$ 
since $ \bar{C}$ is a vertex cover. In particular
$N_G(t_1) \cap D = \emptyset$, where $D:=V(G)\setminus (C_1 \cup C_2)$,
since $\bar{C} \subset C_1 \cup C_2$.
Furthermore, by Lemma~\ref{3-5-cycles-linpresented1}
$$
B:=\{ t \in V(G') \vert \mbox{ there is } t' \in N_G(t_1) \mbox{ such that } \{t,t'\} \in P \}
$$
is a stable set. Consequently, there is a maximal stable set $L'$ of $G'$ such that $B \subset L'$. 
If $t \in L' \subset C_1 \triangle C_2$, then $t \notin C_1$ or $t \notin C_2$.
Thus, $N_G(t) \subset C_1$ or  $N_G(t) \subset C_2$,
since $C_1$ and $C_2$ are vertex covers of $G$. So, $N_G(t) \cap D= \emptyset$.
This implies, $L'  \cup D$  is a stable set. Also if $\{ t_1, \bar{t}\} \in E(G)$ with $\bar{t} \in L'$,
then there is $\{\bar{t},  \bar{t'} \}  \in P$. Consequently,
$\bar{t'} \in B \subset L'$, a contradiction, since $\bar{t}, \bar{t'} \in L'$ and $L'$ is a stable set.
Hence, $L=L' \cup D  \cup \{t_1\} $ is a stable set, since $N_G(t_1) \cap D= \emptyset$.
Furthermore, $L' \cap D = \emptyset$ and $t_1 \notin L' \cup D$, since 
$L' \subset C_1 \triangle C_2$ and $t_1 \in A$. Thus, $\vert L \vert=\vert L' \vert +\vert C \vert+1$.
On the other hand, $L_1:=V(G) \setminus C_1$ is a maximal stable set, since 
$C_1 \in E(G^\vee)$. Notice $L_1=B_2 \cup D$ where $B_2:=C_2 \setminus C_1$,
since $V(G)= (C_1 \cup C_2) \cup D$. In particular $B_2$ is a stable set and $B_2 \subset V(G')$.
Then, $\vert L' \vert \geq  \vert B_2 \vert$ implies 
$$
\vert L \vert=\vert L' \vert +\vert D \vert+1 \geq \vert B_2 \vert  +\vert D \vert +1  \geq
\vert L_1 \vert +1,
$$
This is a contradiction, since $G$ is unmixed. 
\end{proof}

\begin{proposition}\label{jul27-24} Let $U,U_1,U_2,U_3,U_4,U_5$ be the families of
graphs defined in Section~\ref{intro-section}. 
Then, 
$$
U_5 \subset U_4 \subset U_3 \subset U_2 \subset U_1 \subset U\ \mbox{ and all inclusions are strict}.
$$
\end{proposition}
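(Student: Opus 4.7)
The plan is to chain the five inclusions from results already proved in the paper, and then exhibit a separating example at each stage; as in the invoked statements (notably Theorem~\ref{connected-duplicated}), $U$ is understood as consisting of unmixed graphs without isolated vertices, so that all referenced hypotheses apply verbatim.

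For the inclusions themselves: $U_5 \subset U_4$ is exactly Theorem~\ref{4-cycles-connected}. For $U_4 \subset U_3$, if $J = I_c(G)$ is linearly presented, then by Proposition~\ref{Bigdeli-Herzog-j}(c) any two distinct minimal vertex covers $C_i, C_j$ are joined by a path in $\mathcal{G}_J^{(C_i,C_j)} \subset \mathcal{G}_J$, so $\mathcal{G}_J$ is connected. The inclusion $U_3 \subset U_2$ is part~(a) of Theorem~\ref{connected-duplicated}. For $U_2 \subset U_1$, I would take the contrapositive of Lemma~\ref{Property-duplicated}: if some induced $4$-cycle $\{t_1, t_2, t_3, t_4, t_1\}$ of $G$ has all four of its edges satisfying the {\rm (P)} property, then that lemma gives $N_G(t_1) = N_G(t_3)$, so $t_1$ and $t_3$ are duplicated and $G \notin U_2$. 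Finally $U_1 \subset U$ holds by definition.

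For strictness, Examples~\ref{contraexample}, \ref{example-conjetura2}, and \ref{example-conjetura} from Section~\ref{examples-section} witness $U_5 \subsetneq U_4$, $U_4 \subsetneq U_3$, and $U_3 \subsetneq U_2$ respectively. For $U_1 \subsetneq U$ the $4$-cycle $C_4$ suffices: it is unmixed (the two minimal vertex covers $\{t_1, t_3\}$ and $\{t_2, t_4\}$ both have size $2$), and by Proposition~\ref{compl-$k$-partite}(a) applied with $k = 2$ every edge of $C_4$ has the {\rm (P)} property, so its unique induced $4$-cycle has no edge violating {\rm (P)}, placing $C_4$ outside $U_1$. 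For $U_2 \subsetneq U_1$, I would take the complete homogeneous $3$-partite graph $K_{2,2,2}$: it is unmixed by Proposition~\ref{compl-$k$-partite}(c); the two vertices in each part share a common neighborhood, so they are duplicated and $K_{2,2,2} \notin U_2$; and by Proposition~\ref{compl-$k$-partite}(b) no edge of $K_{2,2,2}$ has the {\rm (P)} property, whence every induced $4$-cycle trivially contains an edge without {\rm (P)}, placing $K_{2,2,2}$ in $U_1$.

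The real content of Proposition~\ref{jul27-24} lies not in any single inclusion --- each reduces immediately to a theorem or lemma already proved above --- but in verifying that the separations are genuine. The only step requiring a moment of thought is $U_2 \subsetneq U_1$, since duplication and the {\rm (P)} property are close cousins, but the complete multipartite graphs $K_{n,n,n}$ with $n \geq 2$ provide a clean uniform family separating them: they have many duplicated vertices yet, by Proposition~\ref{compl-$k$-partite}(b), no edge with (P) at all.
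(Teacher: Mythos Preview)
Your proof is correct and follows essentially the same approach as the paper: the same four results (Theorem~\ref{4-cycles-connected}, Proposition~\ref{Bigdeli-Herzog-j}, Theorem~\ref{connected-duplicated}, Lemma~\ref{Property-duplicated}) for the chain of inclusions, the same three examples for the first three strict inclusions, and your separating examples $K_{2,2,2}$ and $C_4$ are specific instances of the homogeneous complete multipartite families the paper invokes via Proposition~\ref{compl-$k$-partite}. Your parenthetical remark about restricting to graphs without isolated vertices is a fair caveat that the paper itself leaves implicit.
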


\begin{proof} 
By Theorem~\ref{4-cycles-connected},
Proposition~\ref{Bigdeli-Herzog-j}, Theorem~\ref{connected-duplicated} 
and Lemma~\ref{Property-duplicated} all inclusions hold. To show that
all inclusions are strict note that the graph of Example ~\ref{contraexample}
is  in $U_4 \setminus U_5$, the graph of Example ~\ref{example-conjetura2}
is  in $U_3 \setminus U_4$, and the graph of Example ~\ref{example-conjetura}
is in $U_2 \setminus U_3$. Now we take a homogeneous $k$-partite graph $G$ 
with $k \geq 3$ such that $G$ is not a complete graph.
Then,  by (b) and (c) in Proposition~\ref{compl-$k$-partite}, 
$G \in U_1$. Furthermore, as $G$ is not a complete graph, one has 
$G=G_1*\cdots*G_k$, with $\vert G_i \vert \geq 2$. Thus,
$G \notin U_2$, since the vertices of $G_1$ are duplicated. 
Hence, $G \in U_1 \setminus U_2$. Finally if $G'$ is a homogeneous
complete bipartite graph with $\vert V(G)\vert \geq 4$, then
$G'$ has a $4$-cycle. Therefore, by  
(a) and (c) in Proposition~\ref{compl-$k$-partite}, 
 $G' \in U \setminus U_1$.
\end{proof}

\section{Examples}\label{examples-section}

\begin{example}\label{5-cycle} Let $G=C^5$ be a $5$-cycle with vertex set
$V(G)=\{t_1,\ldots,t_5\}$ and let $C$ be the minimal vertex cover 
$\{t_1,t_3,t_4\}$ of $G$. Then, 
\begin{align*}
&\ t_2\notin C,\ t_3\in C,\ 
\{t_2,t_3\}\in E(G),\ N_G(t_3)\setminus C=\{t_2\},\ \mbox{and}\\
&N_G(t_3)=\{t_2,t_4\}\not\subset N_G[t_2]=\{t_1,t_2,t_3\}. 
\end{align*}
\quad More generally one has $N_G(t_i)\not\subset N_G[t_j]$
for any $\{t_i,t_j\}\in E(G)$. Note that all minimal vertex covers of $G$ 
satisfy the exchange property. 
Thus, the converse of Proposition~\ref{saha->exchange} does not hold.
If $C^s$ is a cycle of length $s$, then ${\rm v}(I_c(C^s))=\lfloor
\frac{s}{2}\rfloor$ \cite[Proposition~3.3]{Saha}. In our case ${\rm
v}(I_c(G))=2$. The graph $\mathcal{G}_{I_c(G)}$ of $I_c(G)$ is again a $5$-cycle and, by
Proposition~\ref{min-gen-S}, the columns of the linear syzygy matrix of
$I_c(G)$ are linearly independent. 
\end{example}

\begin{example}\label{example2}
Let $G=C^7$ be a $7$-cycle with vertex set
$V(G)=\{t_1,\ldots,t_7\}$. The ideal $I_c(G)$ is minimally generated
by monomials of degree $4$. 
By Theorem~\ref{4-cycles-connected}, $I_c(G)$ is linearly presented.
By using \textit{Macaulay}$2$ \cite{mac2}, we can see that $I_c(G)$ does
not have a linear resolution. Furthermore, by Theorem~\ref{eclipse-apr8-24},
${\rm v}(I_c(G))=\alpha_0(G)-1=3$.
\end{example}  

\begin{example}\label{example-cograph}
Let $G_1$ and $G_2$ be two vertex disjoint 5-cycles. The join $G=G_1*G_2$
consists of $G_1\cup G_2$ and all the edges joining $V(G_1)$ and
$V(G_2)$. The complement $\overline{G}$ of $G$ is $G_1\cup G_2$ and $G$ is a 
codi-graph but $G$ is not a cograph because $G_1$ is an induced 
connected subgraph of $G$ whose complement is connected. By
Example~\ref{5-cycle} and Proposition~\ref{join-formula}, one has ${\rm
v}(I_c(G))=2+5=7$.
\end{example}

\begin{example}\label{example-join-formula}
Let $C^3$ and $C^5$ be two vertex disjoint odd cycles of length  $3$
and $5$, respectively, and let $G=C^3*C^5$ be their join. Then, 
${\rm v}(I_c(C^3))+|V(C^5)|=1+5=6$ and ${\rm
v}(I_c(C^5))+|V(C^3)|=2+3=5$. Thus, by Proposition~\ref{join-formula},
one has ${\rm v}(I_c(G))=5$. As $G$ is a codi-graph, one has 
${\rm reg}(S/I_c(G))=|V(G)|-2=6$ (Proposition~\ref{codi-graph-char}).
\end{example}

\begin{example}\label{contraexample}
Let $G$ be a graph consisting of an induced $4$-cycle 
$C^4=\{t_1,t_2,t_3,t_4,t_1\}$, and edges $\epsilon_1=\{t_1,t'_1\}$, $\epsilon_2=\{t_2,t'_2\}$, 
$\epsilon_3=\{t_3,t'_3\}$ and $\epsilon_4=\{t_4,t'_4\}$. Then, $P=\{\epsilon_1, \epsilon_2, \epsilon_3, \epsilon_4\}$
is a perfect matching with the {\rm (P)} property. Consequently, 
by Proposition~\ref{propertyP}, $\alpha_0(G)=4$.  
Thus, $P$ is a perfect matching of  K\"onig type
and $G$ is  K\"onig. Therefore, by Proposition~\ref{unmixed-Koning-CM}(b)   
and Theorem~\ref{Konig-connectd}, $G$ is Cohen--Macaulay 
and $I_c(G)$ is linearly presented. 
\end{example}

\begin{example}\label{example-conjetura}
 Let $G$ be the graph depicted in
Figure~\ref{figure-enrique} consisting of: a complete subgraph
$\mathcal{K}_4$ with 
$V(\mathcal{K}_4)=\{t_1,t_2,t_3,t_4\}$ and four $4$-cycles $C^{(1)}=\{t_1,t_2,t_6,t_5,t_1\}$,
 $C^{(2)}=\{t_1,t_8,t_7,t_3,t_1\}$, $C^{(3)}=\{t_2,t_9,t_{10},t_4,t_2\}$ and 
 $C^{(4)}=\{t_3,t_4,t_8,t_9,t_3\}$. Using
\textit{Macaulay}$2$ \cite{mac2}, one obtains that $G$ is an unmixed
graph with $\alpha_0(G)=6$ whose minimal vertex covers are: 
\begin{align*}
&C_1=\{t_2,t_3,t_4,t_5,t_8,t_9\},\
 C_2=\{t_2,t_3,t_4,t_5,t_8,t_{10}\},\ C_3=\{t_1,t_2,t_3,t_5,t_8,t_{10}\}, 
\\
&C_4=\{t_1,t_2,t_3,t_6,t_8,t_{10}\},\
C_5=\{t_1,t_3,t_4,t_6,t_8,t_9\},\ C_6=\{t_1,t_3,t_4,t_6,t_7,t_9\},
\\
&C_7=\{t_1,t_2,t_4,t_6,t_7,t_9\} \mbox { and }
C_8=\{t_1,t_2,t_4, t_5,t_7,t_9\}.
\end{align*}
\quad Hence,  $\mathcal{G}_{I_c(G)}$ consists of two disjoint path
${\mathcal{P}}_1=\{C_1, C_2, C_3, C_4\}$ and ${\mathcal{P}}_2=\{C_5, 
C_6, C_7, C_8\}$.
Therefore,  $\mathcal{G}_{I_c(G)}$ is not connected. Also, the exchange edges of $G$
are:
$$
\{t_9,t_{10}\},\  \{t_1, t_4\},\ \{t_5, t_6\},\ \{t_7, t_8\} \mbox{ and } \{t_2, t_3\}.
$$
\quad On the other hand note that the graph $G$ has no duplicated 
vertices. Then, by
Lemma~\ref{Property-duplicated}, every $4$-cycle has an edge 
without the {\rm (P)} property. Furthermore, the regularity of $S/I_c(G)$ 
is $6$ and ${\rm v}(I_c(G))=\alpha_0(G)-1=5$.
 
\begin{figure}[ht]
\begin{tikzpicture}[scale=2,thick]
	\tikzstyle{every node}=[minimum width=0pt, inner sep=1.8pt, circle]
	\draw (0.23,1.49) node[draw] (0) { \tiny $t_3$};
	\draw (1.12,1.52) node[draw] (1) { \tiny $t_1$};
	\draw (2.39,1.99) node[draw] (2) { \tiny $t_5$};
	\draw (0.27,0.57) node[draw] (3) { \tiny $t_4$};
	\draw (1.16,0.57) node[draw] (4) { \tiny $t_2$};
	\draw (2.43,0.08) node[draw] (5) { \tiny $t_6$};
	\draw (-1.46,2.1) node[draw] (6) { \tiny $t_8$};
	\draw (-0.45,1.68) node[draw] (7) { \tiny $t_7$};
	\draw (-1.44,-0.1) node[draw] (8) { \tiny $t_9$};
	\draw (-0.48,0.34) node[draw] (9) { \tiny $t_{10}$};
			\draw  (1) edge (6);
			\draw  (0) edge (4);
			\draw  (1) edge (3);
			\draw  (0) edge (3);
			\draw  (1) edge (4);
			\draw  (0) edge (1);
			\draw  (3) edge (4);
			\draw  (1) edge (2);
			\draw  (4) edge (5);
			\draw  (2) edge (5);
			\draw  (6) edge (7);
			\draw  (0) edge (7);
			\draw  (4) edge (8);
			\draw  (8) edge (9);
			\draw  (3) edge (9);
			\draw  (6) edge (8);
			\draw  (3) edge (6);
			\draw  (0) edge (8);
		\end{tikzpicture}
\caption{Unmixed graph without duplicated vertices, not K\"onig and $\mathcal{G}_{I_c(G)}$ 
not connected.}\label{figure-enrique}
\end{figure}
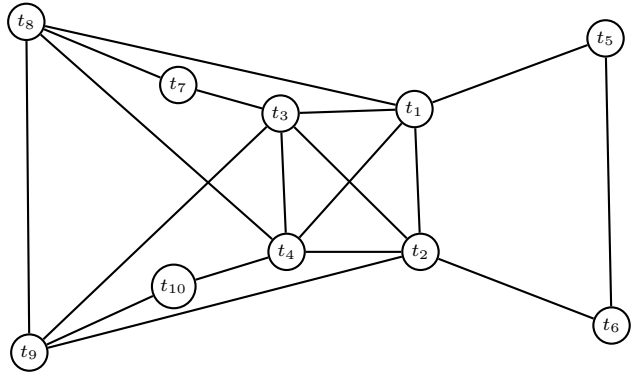
\end{example}

\begin{example}\label{example-conjetura2}
Let $G$ be the graph depicted in Figure~\ref{figure-enrique1} consisting of two cycles
$C^3=\{t_1,t_2,t_3,t_1\}$ and  $C^4=\{t_1,t_2,t_7,t_6,t_1\}$
and two edges $\{t_4,t_5\}$ and $\{t_3,t_4\}$. Using
\textit{Macaulay}$2$ \cite{mac2}, one obtains that $G$ is an unmixed
graph with $\alpha_0(G)=4$ whose minimal vertex covers are: 
\begin{align*}
&C_1=\{t_2,t_3,t_6,t_5\},\ C_2=\{t_2,t_3,t_6,t_4\},\
C_3=\{t_1,t_2,t_4,t_6\},\ C_4=\{t_1,t_2,t_4,t_7\},\\
&C_5=\{t_1,t_3,t_7,t_4\} \mbox{ and } C_6=\{t_1,t_3,t_7,t_5\}.
\end{align*}
\quad The graph $G$ has repeated exchange edges because the edge  
$\{t_4,t_5\}$ of $G$ is an exchange edge appearing in 
$(C_1\setminus\{t_5\})\cup\{t_4\}=C_2$ and
$(C_5\setminus\{t_4\})\cup\{t_5\}=C_6$. 
Letting $J=I_c(G)$, then  $\mathcal{G}_J$  is the path 
$\{C_1, C_2, C_3, C_4, C_5, C_6\}$ with $6$
vertices. 
Hence, $\mathcal{G}_J$ is connected. 
Furthermore, the subgraph $\mathcal{G}_J^{(C_1,C_6)}$
of $\mathcal{G}_J$ consist of two isolated vertices $C_1$ and $C_6$,
since $t_4 \notin C_1 \cup C_6$ and $t_4\in \cap_{i=2}^5 C_i$.
In particular $\mathcal{G}^{(C_1,C_6)}$ is not connected. 
Therefore, by Proposition~\ref{Bigdeli-Herzog-j}, $I_c(G)$ is not linearly presented. 
On the other hand, $\mathcal{G}_J^{(C_2,C_5)}$ is the path
$\{C_2, C_3, C_4, C_5\}$ and the exchange edges of this path
do not form a matching of $G$. This means that
Proposition~\ref{jun5-24}(b) does not extend to paths of length $3$.  

\begin{figure}[ht]
\begin{tikzpicture}[scale=2,thick]
		\tikzstyle{every node}=[minimum width=0pt, inner
		sep=1.8pt, circle]
			\draw (-0.24,1.47) node[draw] (0) { \tiny $t_3$};
			\draw (-0.67,0.75) node[draw] (1) { \tiny $t_1$};
			\draw (0.11,0.76) node[draw] (2) { \tiny $t_2$};
			\draw (-0.64,0.08) node[draw] (3) { \tiny
			$t_6$};
			\draw (0.17,0.1) node[draw] (4) { \tiny $t_7$};
			\draw (0.46,1.43) node[draw] (5) { \tiny $t_4$};
			\draw (1.03,1.23) node[draw] (6) { \tiny
			$t_5$};
			\draw  (0) edge (1);
			\draw  (0) edge (2);
			\draw  (1) edge (2);
			\draw  (1) edge (3);
			\draw  (2) edge (4);
			\draw  (3) edge (4);
			\draw  (0) edge (5);
			\draw  (5) edge (6);
		\end{tikzpicture}
\caption{Unmixed graph with $\mathcal{G}_J$ connected
and $\mathcal{G}_J^{(C_1,C_6)}$ not connected.}\label{figure-enrique1}
\end{figure}
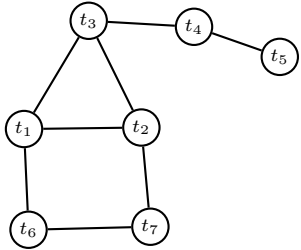

\end{example}

\begin{example}\label{example-conjetura3}
Let $G$ be the graph of Figure~\ref{figure-enrique1} and let $H$ be
the subgraph of $G$ induced by
the vertices $t_1, t_2, t_3, t_6, t_7$. Then, $H$ consists of two cycles:
$\{t_1,t_2,t_3,t_1\}$ and  $\{t_1,t_2,t_7,t_6,t_1\}$.  Using
\textit{Macaulay}$2$ \cite{mac2}, one obtains that $G$ is an unmixed
graph with $\alpha_0(G)=3$ whose minimal vertex covers are: 
$$
C_1=\{t_1,t_3,t_7\},\  C_2=\{t_1,t_2,t_7\},\  C_3=\{t_1,t_2,t_6\}
\mbox{ and } C_4=\{t_2,t_3,t_6\}. 
$$
\quad Letting $J=I_c(H)$, the graph $\mathcal{G}_J$ of $J$ is the path (with $4$
vertices)  $\mathcal{P}=\{C_1, C_2, C_3, C_4\}$. Furthermore,  
the subgraph $\mathcal{G}_J^{(C_i,C_j)}$
of $\mathcal{G}_J$ is connected, for all $1 \leq i < j \leq 4$.
Hence, by Proposition~\ref{Bigdeli-Herzog-j}, $J$ is linearly presented. 
But $H':=H[C_1 \triangle C_4]=\{t_1,t_2,t_7,t_6,t_1\}$ is a $4$-cycle, then 
$H'$ is an unmixed K\"onig graph and it is not Cohen--Macaulay.
Therefore, the converse of Theorem~\ref{noLinear-noCM} is not true.
\end{example}

\begin{appendix}

\section{Procedures}\label{Appendix}

\begin{procedure}\label{procedure1}
Computing the graph $\mathcal{G}_{I_c(G)}$ of the ideal of covers 
$I_c(G)$ of a graph $G$ using \textit{Macaulay}$2$ 
\cite{mac2}. This procedure corresponds to
Example~\ref{example-conjetura}. 
One can compute other examples by changing the
polynomial ring $S$ and the generators of the ideal $I$.
\begin{verbatim}
restart
load "EdgeIdeals.m2"
--The ti's correspond to the vertices of the graph G 
--and the yi's correspond to the vertex covers of G
--These should be changed to run other examples
S=QQ[t1,t2,t3,t4,t5,t6,t7,t8,t9,t10,y1,y2,y3,y4,y5,y6,y7,y8,y9,y10]
--This is the edge ideal of G
I=monomialIdeal(t1*t2,t2*t3,t3*t4,t1*t4,t1*t3,t2*t4,t1*t5,t5*t6,t6*t2,
t1*t8,t2*t9,t3*t7,t4*t10,t7*t8,t10*t9,t8*t9,t3*t9,t4*t8)
--This is the ideal of covers of G
Ic=dual I
--This checks whether or not G is unmixed
degrees Ic
M=coker gens gb Ic
pdim M 
--The syzygy matrix of Ic(G) is:
syz gens Ic
--G has 8 minimal vertex covers corresponding to y1,...,y8
f=matrix{{y1},{y2},{y3},{y4},{y5},{y6},{y7},{y8}}
J=ideal(transpose(f)*(syz gens Ic))
--Eliminate the generators of J of degree > 2 to obtain: 
J1=ideal(apply(flatten entries gens J,x-> if degree(x)=={2}
 then x else 0))
--number of independent linear syzygies of Ic(G):
numgensJ1=#(set(apply(flatten entries gens J,x-> 
if degree(x)=={2} then x else 0))-{0})
fsub=gens sub(ideal(toList (set(apply(flatten entries gens J,
x-> if degree(x)=={2} then x else 0))-{0})),
{t1=>1,t2=>1,t3=>1,t4=>1,t5=>1,t6=>1,t7=>1,t8=>1,t9=>1,t10=>1})
--This gives the graph of Ic(G) if the columns of the 
--linear syzygy matrix of Ic(G) are linearly independent: 
fsub1=monomialIdeal gens gb ideal apply(#(flatten entries fsub),
n->someTerms((flatten entries fsub)#n,1,1)*
someTerms((flatten entries fsub)#n,0,1))
GIc=graph(fsub1)
isBipartite GIc
allOddHoles(GIc)
isConnected(GIc)
L=ass Ic
p=(n)->gens gb ideal(flatten mingens(quotient(Ic,L#n)/Ic))
minA=monomialIdeal(apply(0..#L-1,p))
--This computes the v-number of Ic(G)
vnumber=min flatten degrees minA
--The regularity of Ic(G)
regularity M
--minimal free resolution of Ic(G)
Mres=res M
--This is the syzygy matrix of Ic(G)
Mres.dd_2
--Linearly independent linear syzygies of Ic(G)
submatrix(Mres.dd_2,{0..5})
--rank of the linear syzygy matrix
--this rank is number of minimal vertex covers -1 
--if Ic(G) is linearly presented
rank submatrix(Mres.dd_2,{0..5})
--Minors of the linear syzygy matrix of Ic(G) if 
--columns are linearly independent 
minors(6,submatrix(Mres.dd_2,{0..5}))
--codim >=2 if Ic(G) is linearly presented:
codim minors(6,submatrix(Mres.dd_2,{0..5}))
--Now we use the edge ideal I to compute the graph of Ic(G) 
--using a method that works in general 
G=flatten apply(ass(I),x-> entries gens gb (x))
A1=toList subsets(G,2)
B1=toList set apply(A1,x->toList x)
--This gives the edges of the graph of Ic(G) 
graphIc=apply(0..#B1-1,n-> if #(set((B1#n)#0)*set((B1#n)#1))==
codim(I)-1 then  (set((B1#n)#0),set((B1#n)#1)) else set{})
graph1=toList set graphIc
--This is the number of edges of the graph of Ic(G)
#(set graph1)-1
--If the following two numbers are equal 
--the linear syzygy matrix of Ic(G) has linearly independent columns 
#(set graph1)-1==numgensJ1
\end{verbatim}
\end{procedure}
\end{appendix}

\section*{Acknowledgments.} 
We thank the reviewers for a careful
reading of the paper and for the improvements suggested. 
\textit{Macaulay}$2$ \cite{mac2} was used to implement 
algorithms to compute the graph of ideals of covers and the v-number
of monomial ideals. 

\section*{Statements and Declarations}  
On behalf of all authors, the corresponding author states that there is no conflict of interest.

No funding was received for conducting this study.

The authors have no relevant financial or non-financial interests to disclose.

Data sharing is not applicable to this article as no datasets were
generated or analyzed during the current study.

\bibliographystyle{plain}

\end{document}